\newtheorem{thm}{Theorem}[section]
\newtheorem*{uthm}{Theorem}
\newtheorem{prop}[thm]{Proposition}
\newtheorem{cor}[thm]{Corollary}
\newtheorem{lemma}[thm]{Lemma}
\newtheorem{setting}[thm]{Setting}
\theoremstyle{definition}
\newtheorem{dfn}[thm]{Definition}
\newtheorem{rmk}[thm]{Remark}
\newtheorem{ex}[thm]{Example}
\DeclareMathOperator{\Hom}{\mathsf{Hom}}
\DeclareMathOperator{\End}{\mathsf{End}}
\DeclareMathOperator{\Ext}{\mathsf{Ext}}
\DeclareMathOperator{\HOM}{\mathcal{H}om}
\DeclareMathOperator{\RHom}{\mathbb{R}\mathsf{Hom}}
\DeclareMathOperator{\Cat}{\mathsf{Cat}}
\DeclareMathOperator{\CAT}{\mathsf{CAT}}
\newcommand{\Acal}{\mathcal{A}}
\newcommand{\Ccal}{\mathcal{C}}
\newcommand{\Ecal}{\mathcal{E}}
\newcommand{\Fcal}{\mathcal{F}}
\newcommand{\Gcal}{\mathcal{G}}
\newcommand{\Hcal}{\mathcal{H}}
\newcommand{\Kcal}{\mathcal{K}}
\newcommand{\Lcal}{\mathcal{L}}
\newcommand{\Scal}{\mathcal{S}}
\newcommand{\Tcal}{\mathcal{T}}
\newcommand{\Ucal}{\mathcal{U}}
\newcommand{\Vcal}{\mathcal{V}}
\newcommand{\Xcal}{\mathcal{X}}
\newcommand{\Ycal}{\mathcal{Y}}
\newcommand{\Rbb}{\mathbb{R}}
\newcommand{\Tbb}{\mathbb{T}}
\newcommand{\Zbb}{\mathbb{Z}}
\newcommand{\C}{\mathsf{C}}
\newcommand{\D}{\mathsf{D}}
\newcommand{\co}{\mathsf{co}}
\newcommand{\K}{\mathsf{K}}
\newcommand{\Kac}{\mathsf{K}_\mathsf{ac}}
\newcommand{\Kcoac}{\mathsf{K}_\mathsf{coac}}
\newcommand{\inj}[1]{\mathsf{Inj}(#1)}
\newcommand{\fpinj}[1]{\mathsf{fpInj-}#1}
\newcommand{\fp}[1]{\mathsf{fp}(#1)} 
\newcommand{\Mod}[1]{\mathsf{Mod}\mbox{-}#1}
\newcommand{\Qcoh}[1]{\mathsf{Qcoh}\mbox{-}#1}
\newcommand{\coh}[1]{\mathsf{coh}\mbox{-}#1}
\newcommand{\Per}[1]{\mathsf{Per}\mbox{-}#1}
\renewcommand{\mod}[1]{\mathsf{mod}\mbox{-}#1}
\newcommand{\real}{\mathsf{real}}
\newcommand{\Spec}[1]{\mathsf{Spec}(#1)}
\newcommand{\Supp}{\mathsf{Supp}}
\newcommand{\Lex}{\mathsf{Lex}}
\newcommand{\Ab}{\mathsf{Ab}}
\newcommand{\Ass}[1]{\mathsf{Ass}(#1)}
\newcommand*{\Perp}[1]{{}^{\perp_{#1}}}
\newcommand{\hocolim}{\mathsf{hocolim}}
\newcommand{\FPinf}{\mathsf{FP}_\infty}
\newcommand{\Prod}{\mathsf{Prod}}
\newcommand{\Add}{\mathsf{Add}}
\newcommand{\ann}[1]{\mathsf{ann}(#1)}
\newcommand{\pp}{\mathfrak{p}}
\newcommand{\qq}{\mathfrak{q}}
\newcommand{\mm}{\mathfrak{m}}
\newcommand{\sing}{\mathsf{sg}}
\newcommand{\DS}{\mathsf{S}}
\newcommand{\smallrecoll}[3]{
	\begin{tikzcd}[column sep=1pc, row sep=1pc,ampersand replacement=\&]
		\ensuremath{#1} \arrow{r} \& \ensuremath{#2} \arrow[shift left]{l} \arrow[shift right]{l} \arrow{r} \&
			\ensuremath{#3}  \arrow[shift left]{l} \arrow[shift right]{l} \end{tikzcd}
}
\newcommand{\toeq}{\xrightarrow{\sim}}
\newcommand{\Abfs}[1]{\mathsf{Ab4}^*\mathsf{-}#1}
\newcommand{\newterm}[1]{\textbf{#1}}
\title[Singular equivalences to locally coherent hearts]{Singular equivalences to locally coherent hearts of commutative noetherian rings}
\author{Michal Hrbek}
\address[M. Hrbek]{Institute of Mathematics of the Czech Academy of Sciences, \v{Z}itn\'{a} 25, 115 67 Prague, Czech Republic}
\email{hrbek@math.cas.cz}
\author{Sergio Pavon}
\address[S. Pavon]{Dipartimento di Matematica ``Tullio Levi-Civita'', Università degli Studi di Padova, Via Trieste, 63, 35121 Padova, Italy}
\email{sergio.pavon@math.unipd.it}
\subjclass[2020]{Primary: 13E05, 18G10; Secondary: 14F08.}
\thanks{M. Hrbek was supported by the GAČR project 20-13778S and RVO: 67985840.
S. Pavon was partially supported by Project BIRD163492/16 and Research programme
DOR1828909; he wishes to thank the Institute of Mathematics of the Czech Academy of
Sciences for its hospitality while working on this project.}
\begin{document}
\begin{abstract}
	We show that Krause's recollement exists for any locally coherent
	Grothendieck category whose derived category is compactly generated.
	As a source of such categories, we consider the hearts of intermediate and
	restrictable $t$-structures in the derived category of a commutative
	noetherian ring. We show that the induced tilting object over such a heart
	gives rise to an equivalence between the two Krause's recollements, and in
	particular, to a singular equivalence.
\end{abstract}
\maketitle
\tableofcontents
\section*{Introduction}

When working in the derived category $\D(\Gcal)$ of an abelian category $\Gcal$,
an important tool used since the beginning of the theory was its relation with the homotopy
category $\K(\inj\Gcal)$ of complexes with injective terms, via the construction
of injective resolutions. It was an early observation, though, that not all
complexes of injectives can be used as resolutions: the correct ones to consider
are the dg-injective ones, as explained by Spaltenstein \cite{Spa}. 

Nevertheless, a decade and a half later
Krause \cite{K05} showed that also the whole homotopy category of injectives $\K(\inj\Gcal)$
deserves an attention of its own. He
proved that if $\Gcal$ is a locally noetherian Grothendieck category such that
its derived category is compactly generated (e.g. the category of quasicoherent
sheaves over a noetherian scheme, or the category of modules over a noetherian
ring) then $\K(\inj \Gcal)$ is also compactly generated,
and the Verdier localization functor $Q: \K(\inj \Gcal) \rightarrow
\D(\Gcal)$, together with its right adjoint $\D(\Gcal)\to
\K(\inj\Gcal)$ which computes dg-injective resolutions, can be completed to a
\emph{recollement} of triangulated categories
\[\smallrecoll{\Kac(\inj \Gcal)}{\K(\inj \Gcal)}{\D(\Gcal)}.\]
Here, $\Kac(\inj \Gcal)$ is the full subcategory of $\K(\inj \Gcal)$ consisting
of the acyclic complexes, called
the \emph{stable derived category} of $\Gcal$ in \cite{K05}. An important point is
that the recollement renders $\Kac(\inj \Gcal)$ compactly generated as well; and
in fact, its compact objects form a category equivalent (up to retracts) to the
\emph{singularity category} $\D^\sing(\Gcal) = \D^b(\fp \Gcal)/\D^c(\Gcal)$, an
important concept introduced by Buchweitz \cite{buch-87} and Orlov
\cite{orlo-06} in order to measure the failure of a
scheme to be regular.

Another decade later, Šťovíček \cite{St} showed that we obtain a similar
picture if we replace the locally noetherian condition by the much more general
one of being locally coherent. This wider class of Grothendieck categories is of
considerable interest, as it encompasses, for example, many categories arising from
triangulated purity theory \cite{beli-00}, localizations of module categories
\cite{herzog, Kr97}, as well as some categories coming from tilting theory on which
we focus below.
This generalization required employing a rather
different approach including model category techniques,
viewing $\K(\inj \Gcal)$ as the \emph{coderived category} of $\Gcal$ in
the sense of Becker \cite{Bec14}. To obtain Krause's recollement in the
locally coherent setting however, an additional hypothesis was used in \cite{St}
--- $\Gcal$ is assumed to have a set of finitely presented generators which are
of finite projective dimension. The first aim of the present paper is
to show, in Section 2, that this assumption can be significantly weakened,
obtaining a full generalisation of Krause's result:

\begin{uthm}[\ref{theorem:recollement}]
	Let $\Gcal$ be a locally coherent Grothendieck category whose derived
	category is compactly generated. Then Krause's recollement exists for
	$\Gcal$.
\end{uthm}

In particular, we define the singularity category of these Grothendieck
categories. We proceed to show (\cref{prop-sing}) that as soon as $\Gcal$ satisfies a
mild additional condition, the vanishing of the singularity category has the
expected homological interpretation, in terms of finitely presented objects
having finite projective dimension. In the case of the category of quasicoherent sheaves over a separated coherent scheme $X$, our notion of singularity category coincides with the Orlov's quotient $\D^b(\coh X)/\Per X$ (\cref{cor:scheme}). In \cref{subsec:small}, we use Roos' correspondence between skeletally small and locally coherent abelian categories to define the singularity category of a large class of small abelian categories, and compare this to Orlov's construction of a singularity category of a general triangulated category.

As anticipated above, in Sections~3 and~4 we turn our attention to another
rich source of locally coherent Grothendieck categories to which the theorem
applies: the unbounded derived category $\D(\Mod R)$ of a commutative noetherian
ring $R$. Here, our categories of interest appear as the hearts of restrictable
$t$-structures, i.e. those inducing $t$-structures on the bounded derived
category $\D^b(\mod R)$ of finitely presented modules, by restriction.

These hearts are indeed locally coherent \cite[Corollary~4.2]{MZ}, while
they are rarely locally noetherian \cite[Proposition~5.6]{Laking}, and they
might fail to satisfy the additional hypothesis used in \cite{St} (as shown in \cref{example:hyp7.1}).
On the other hand, their derived categories have recently been shown to be
always equivalent to $\D(\Mod R)$ \cite[Theorem~6.16]{PV2} --- in particular,
they are compactly generated, and \cref{theorem:recollement} can be applied to obtain
Krause's recollement (\cref{cor:restr-recollement}). The existence of these
derived equivalences, as well as the particularly nice form they can be
expressed in (\cref{lemma:rhom-equivalence}), is our main reason for
restricting ourselves to the
commutative world, since an analogue for non-commutative rings is not available
so far.

We remark that there is no shortage of restrictable t-structures
(see \cref{ubiqutous}); for example, whenever $R$ admits a dualizing
complex then the \emph{Cohen-Macaulay t-structure} in $\D^b(\mod R)$ in the
sense of \cite{AJS10} extends to such a t-structure in $\D(\Mod R)$. More
generally, in this situation restrictable $t$-structures can be constructed by
\emph{sp-filtrations} satisfying the \emph{weak Cousin condition}
\cite[Theorem~6.9]{AJS10}.

%
%
Now,
what is the comparison between the recollements for
$\Mod R$ and the heart $\Hcal$ of such a $t$-structure?
In Section~4 we answer this question with the following
\begin{uthm}[\ref{theorem:eor}]
	Let $R$ be a commutative noetherian ring and $\Hcal$ the heart of an
	intermediate restrictable $t$-structure in $\D(\Mod R)$. Then there is an
	equivalence of recollements
	\[\begin{tikzcd}[column sep=1pc, row sep=1pc]
		\Kac(\inj\Hcal) \arrow{d}{\cong} \arrow{r} &
			\K(\inj\Hcal) \arrow{d}{\cong} \arrow[shift left]{l} \arrow[shift right]{l} \arrow{r} &
			\D(\Hcal) \arrow{d}{\cong} \arrow[shift left]{l} \arrow[shift right]{l} \\
		\Kac(\inj{R}) \arrow{r} &
			\K(\inj{R}) \arrow[shift left]{l} \arrow[shift right]{l} \arrow{r} &
			\D(\Mod{R}). \arrow[shift left]{l} \arrow[shift right]{l}
	\end{tikzcd}\]
\end{uthm}
In particular, we obtain an equivalence between the singularity categories and
stable derived categories of $\Mod R$ and $\Hcal$ (\cref{cor:sing-eq}, \cref{theorem:eor}). 

We conclude this introduction by mentioning that in the literature (see e.g.
\cite[Lemma~4.1]{IW}) there have been considered singular equivalences induced
by derived equivalences between the bounded derived categories of coherent
objects over schemes or rings. In our situation however, the derived
equivalences come from the ``large'' tilting theory, as developed in
\cite{NSZ,PV,Vi}. This forces us to use different techniques to obtain
the singular equivalence, including working with an enhancement of the unbounded
derived categories in the form of stable derivators.

\subsection*{Acknowledgement} We are grateful to Leonid Positselski and Martin Kalck for very
useful discussions concerning the paper, and to Steffen Koenig for some
suggestions regarding exposition.

\section{Preliminaries}

In this section we recall from the theory of triangulated categories the various
concepts we will need later: compact objects, $t$-structures, recollements, derived and
coderived categories, derivators and realization functors.

\subsection{Compact objects in triangulated categories}
A major role in our discussion will be played by compact objects.

\begin{dfn}
	Let $\Tcal$ be a triangulated category. An object $C\in\Tcal$ is said to
	be \newterm{compact} if, for every family $(X_i \mid i\in I)$ of
	objects whose coproduct exists in $\Tcal$, the canonical morphism
	\[\coprod_{i\in I}\Hom_\Tcal(C,X_i) \to \Hom_\Tcal(C, \coprod_{i\in I} X_i)\]
	is an isomorphism. The full subcategory of compact objects of $\Tcal$
	(which is a thick subcategory) will be denoted by $\Tcal^c$. If $\Tcal$
	has all coproducts, it is said to be \newterm{compactly generated} if it
	coincides with its smallest triangulated subcategory closed under
	coproducts and containing $\Tcal^c$.
\end{dfn}

We will often employ a \emph{dévissage} argument, which is a standard tool. For
the convenience of the reader, we spell out once the application we will use
the most.

\begin{lemma}[Double \emph{dévissage}]\label{lemma:double-devissage}
	Let $\Tcal,\Scal$ be compactly generated triangulated categories, and
	$F\colon \Tcal\to \Scal$ a triangle functor. Assume that $F$ preserves
	coproducts, and that it restricts to an equivalence $\Tcal^c\to
	\Scal^c$. Then $F$ is an equivalence.
\end{lemma}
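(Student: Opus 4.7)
The plan is to deduce fullness and faithfulness from a two-stage dévissage, and then to handle essential surjectivity by a much simpler single dévissage. Both halves exploit the fact that $F$ sends $\Tcal^c$ to $\Scal^c$ bijectively (up to iso) and preserves coproducts, and both use that a coproduct-closed triangulated subcategory of a compactly generated triangulated category which contains the compacts must be the whole category.

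For fully-faithfulness, I first fix a compact object $C \in \Tcal^c$ and consider
\[
\Xcal_C = \{X\in\Tcal \mid F\colon \Hom_\Tcal(C,X) \to \Hom_\Scal(FC,FX) \text{ is an isomorphism}\}.
\]
Since $F$ restricts to an equivalence on compacts, $\Xcal_C$ contains $\Tcal^c$. By the five-lemma applied to the cohomological long exact sequences $\Hom_\Tcal(C,-)$ and $\Hom_\Scal(FC,-)$, the class $\Xcal_C$ is a triangulated subcategory. Moreover, $C$ is compact in $\Tcal$ and $FC$ is compact in $\Scal$ (as $FC\in\Scal^c$ by hypothesis), and $F$ preserves coproducts, so $\Xcal_C$ is closed under coproducts. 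Hence $\Xcal_C = \Tcal$. Next I fix an arbitrary $X \in \Tcal$ and consider
\[
\Ycal_X = \{Y\in\Tcal \mid F\colon \Hom_\Tcal(Y,X) \to \Hom_\Scal(FY,FX) \text{ is an isomorphism}\}.
\]
The previous step shows $\Ycal_X \supseteq \Tcal^c$. Again the five-lemma gives that $\Ycal_X$ is triangulated, and closure under coproducts now follows from the fact that $\Hom$ turns coproducts in the first argument into products (and $F$ preserves coproducts). Thus $\Ycal_X = \Tcal$, proving that $F$ is fully faithful.

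For essential surjectivity, let $\Scal' \subseteq \Scal$ denote the essential image of $F$. Fullness of $F$ together with the existence of cones in $\Tcal$ implies that $\Scal'$ is a triangulated subcategory, and coproduct-preservation of $F$ gives that it is closed under coproducts. Since $F$ restricts to an equivalence $\Tcal^c \toeq \Scal^c$, we have $\Scal^c \subseteq \Scal'$. Compact generation of $\Scal$ then forces $\Scal' = \Scal$, and the lemma follows.

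The argument is essentially formal once the closure properties of $\Xcal_C$ and $\Ycal_X$ are set up correctly; the only subtle point is the compactness of $FC$ used in the first dévissage, which is precisely guaranteed by the hypothesis that $F|_{\Tcal^c}$ lands in $\Scal^c$. I expect no real obstacle beyond bookkeeping.
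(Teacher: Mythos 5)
Your proposal is correct and follows essentially the same two-stage dévissage as the paper: first establish $\eta_{C,X}$ is an isomorphism for compact $C$ and arbitrary $X$ by a coproduct-closed triangulated subcategory argument, then bootstrap this to general first arguments, and finally handle essential surjectivity by noting the essential image of a fully faithful coproduct-preserving triangle functor is a localizing subcategory containing $\Scal^c$. The only cosmetic difference is whether one quantifies over $C$ inside the definition of the subcategory or fixes $C$ and runs the argument for each compact separately; the content is identical.
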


\begin{proof}
	We first prove that $F$ is fully faithful.
	For every $X,Y$ in $\Tcal$, consider the natural map 
	\[ \eta_{X,Y}\colon \Hom_\Tcal(X,Y)\to \Hom_\Scal(FX,FY)\]
	induced by $F$. Let $\Ycal\subseteq\Tcal$ be the full subcategory
	of the objects $Y$ such that $\eta_{C,Y}$ is an
	isomorphism for every $C\in\Tcal^c$. It is easily seen to be
	triangulated; moreover, since $F$ preserves coproducts and the objects
	$C$ and $FC$ are compact in $\Tcal$ and $\Scal$ respectively, $\Ycal$ is
	also closed under coproducts. By hypothesis, $\Tcal^c\subseteq\Ycal$,
	and therefore $\Ycal=\Tcal$. Now, let $\Xcal\subseteq\Tcal$ be the full
	subcategory of the objects $X$ such that $\eta_{X,Y}$ is an isomorphism
	for every $Y\in\Tcal$. Again, it is triangulated; this time it is also
	automatically closed under coproducts, and by the previous discussion
	$\Tcal^c\subseteq\Xcal$. We conclude that $\Xcal=\Tcal$, i.e. that $F$
	is fully faithful. Now, consider the essential image of $F$ in $\Scal$.
	Since $F$ is a full triangle functor, its image is a triangulated
	subcategory (fullness is needed for closure under extensions). Moreover,
	it is also closed under coproducts, because $F$ preserves them, and
	contains $\Scal^c$, by hypothesis. We deduce that $F$ is also
	essentially surjective, i.e. an equivalence.
\end{proof}
\subsection{$t$-structures}\label{prelim:t-str}
\cite{BBD} Let $\Tcal$ be a triangulated category. A pair $\Tbb = (\Ucal,\Vcal)$ of full subcategories of $\Tcal$ is a \newterm{$t$-structure} provided that the following axioms hold:
\begin{enumerate}
	\item[(t-1)] $\Hom_{\Tcal}(\Ucal,\Vcal) = 0$,
	\item[(t-2)] $\Ucal$ is closed under the suspension functor, and
	\item[(t-3)] for any $X \in \Tcal$ there is a triangle $U \rightarrow X \rightarrow V \rightarrow U[1]$ with $U \in \Ucal$ and $V \in \Vcal$.  
\end{enumerate}

We call the subcategory $\Ucal$ (resp. $\Vcal$) the \newterm{aisle} (resp. the
\newterm{coaisle}) of the $t$-structure $\Tbb$. It follows from the axioms that
$\Ucal = \Perp{0}\Vcal = \{X \in \Tcal \mid \Hom_\Tcal(X,\Vcal) = 0\}$ and
$\Vcal = \Ucal\Perp{0}$, and so any $t$-structure is uniquely determined by its
aisle or by its coaisle. The triangle from the axiom (t-3) is unique and
functorial. In fact, the triangle is isomorphic to a triangle of the form
$\tau_\Ucal(X) \rightarrow X \rightarrow \tau_\Vcal(X) \rightarrow
\tau_\Ucal(X)$, where $\tau_\Ucal: \Tcal \rightarrow \Ucal$ (resp. $\tau_\Ucal:
\Tcal \rightarrow \Ucal$) is the right (resp. left) adjoint to the inclusion of
the aisle (resp. coaisle) into $\Tcal$. The \newterm{heart} of the $t$-structure
$\Tbb$ is defined as $\Hcal = \Ucal \cap \Vcal[1]$ and it is an abelian category
with the exact structure induced by the triangles of $\Tcal$ whose terms belong
to $\Hcal$.

Assuming that $\Tcal$ has a suitable enhancement, see \cite[\S3, Theorem
3.11]{PV} or \cite[\S4]{Vi}, there exists a \newterm{(bounded) realization functor}
associated to the $t$-structure $\Tbb$, i.e. a 
triangle functor $\real^b_{\Tbb}:
\D^b(\Hcal) \rightarrow \Tcal$ which extends the inclusion $\Hcal \subseteq
\Tcal$. 
Realization functors are not uniquely determined in general, but as shown in
\cite[Proposition~3.17]{PV}, bounded derived equivalences of abelian categories
are always of the form $\real^b_\Tbb$ for a suitable $t$-structure $\Tbb$, up to
an equivalence of abelian categories.

A $t$-structure $\Tbb = (\Ucal,\Vcal)$ is called \newterm{stable} if its aisle
$\Ucal$ (equivalently, its coaisle $\Vcal$) is a triangulated subcategory of
$\Tcal$. The aisles of stable $t$-structures are precisely the coreflective
thick subcategories of $\Tcal$ (\cite[Proposition~4.9.1]{K10}). Such
subcategories are automatically \newterm{localising}, i.e. triangulated and
closed under existing coproducts.

\subsection{Recollements and their equivalences}\label{sec:recollements}

\cite{BBD} Let $\Ucal,\Vcal,\Tcal$ be triangulated categories. A
\newterm{recollement} (of $\Tcal$) is a diagram of triangle functors
\begin{equation}\label{eq-recollement}
\begin{tikzcd}[column sep=huge]
	\Vcal \arrow{r}{i_*} &
	\Tcal \arrow[shift left,  bend left] {l}       {i^!}
			   \arrow[shift right, bend right]{l}[above]{i^*}
			   \arrow{r}{j^*} &
	\Ucal \arrow[shift left,  bend left] {l}       {j_*}
			  \arrow[shift right, bend right]{l}[above]{j_!}
\end{tikzcd}
\end{equation}
such that:
\begin{enumerate}
	\item[(i)] $(i^*,i_*,i^!)$ and $(j_!,j^*,j_*)$ are adjoint triples,
	\item[(ii)] $i_*, j_!, j_*$ are fully faithful,
	\item[(iii)] $\mathrm{Im}(i_*) = \mathrm{Ker}(j^*)$.
\end{enumerate}

We say that two recollements $\smallrecoll{\Vcal}{\Tcal}{\Ucal}$ and
$\smallrecoll{\Vcal'}{\Tcal'}{\Ucal'}$ are \newterm{equivalent} if there are
triangle equivalences $F: \Tcal \rightarrow \Tcal'$ and $G: \Ucal \rightarrow
\Ucal'$ such that the diagram
\[ \begin{tikzcd}
	\Tcal  \arrow{d}{\cong}[swap]{F}\arrow{r}{j^*} & \Ucal \arrow{d}{\cong}[swap]{G}\\
	\Tcal' \arrow{r}{j^{*'}} & \Ucal'
\end{tikzcd}
\]
is commutative (up to a natural equivalence). It follows from \cite{PS88} that
this situation is enough to induce a triangle equivalence $H: \Vcal \rightarrow
\Vcal'$ and the commutativity of all of the six possible squares corresponding
to the six different functors from the definition of recollement
\cref{eq-recollement}.

It is well-known that any recollement as in \cref{eq-recollement} induces a
\newterm{(stable) TTF triple} $(j_!\Ucal,i_*\Vcal,j_*\Ucal)$, that is, a pair of
two adjacent (automatically stable) $t$-structures $(j_!\Ucal,i_*\Vcal)$ and
$(i_*\Vcal,j_*\Ucal)$. In fact, this assignment yields a bijective
correspondence between equivalence classes of recollements of $\Tcal$ and TTF
triples in $\Tcal$.

\subsection{Categories of complexes and the coderived category.}

Let $\Gcal$ be an abelian category. We will deal with many categories whose
objects are complexes with terms in $\Gcal$, so we proceed to fix the notation,
in order to recall some less known definitions and to point out the relations among them.

As usual, $\C(\Gcal)$ denotes the category of complexes and cochain maps.
Inside $\C(\Gcal)$, one can consider the acyclic complexes, and among them the
contractible ones. By forming the quotient over the contractible complexes, one
obtains the homotopy category $\K(\Gcal)$ of $\Gcal$. Inside $\K(\Gcal)$ there
are again the acyclic complexes, whose subcategory is denoted by $\Kac(\Gcal)$.
The derived category $\D(\Gcal)$ of $\Gcal$ is defined as the Verdier
localisation $\K(\Gcal)/\Kac(\Gcal)$, and in all the occurrences in this paper
this construction will result in an honest (triangulated) category. The
localisation functor will be denoted by $Q\colon \K(\Gcal)\to \D(\Gcal)$.
Notice that when $\Gcal$ has exact coproducts, $Q$ commutes with
coproducts. We denote by $\D^b(\Gcal)$ the bounded derived category of $\Gcal$
--- the full triangulated subcategory of $\D(\Gcal)$ consisting of objects whose
cohomology vanishes in all but finitely many degrees.

Now let $\Gcal$ be a Grothendieck abelian category. Inside $\K(\Gcal)$ there is
the subcategory $\K(\inj\Gcal)$ of complexes with injective terms. Its left
$\Hom$-orthogonal is the subcategory $\Kcoac(\Gcal)$ of \newterm{coacyclic
objects}. These are equivalently defined in $\C(\Gcal)$ as those complexes $X$
such that $\Ext^1_{\C(\Gcal)}(X,Y)=0$ for every complex $Y$ with injective terms
(see \cite[Definition~6.7]{St}, and \cite{Bec14,Pos11} for the original
definitions). The pair of subcategories $(\Kcoac(\Gcal),\K(\inj\Gcal))$ is a
stable $t$-structure in $\K(\Gcal)$; the corresponding right truncation will be
denoted by $I_\lambda\colon \K(\Gcal)\to \K(\inj\Gcal)$ (see \cite[Corollary 7
and Example 5]{K11}). The \newterm{coderived category} (in Becker's sense) of
$\Gcal$ is defined as the Verdier localisation $\D^\co(\Gcal) :=
\K(\Gcal)/\Kcoac(\Gcal)$, and it is equivalent to $\K(\inj\Gcal)$ via the
functor induced by $I_\lambda$. Coacyclic complexes are in particular acyclic
(otherwise they would have a non-zero morphism to the injective envelopes of
their non-zero cohomologies), so there is a localisation $\D^\co(\Gcal)\to
\D(\Gcal)$, which corresponds to the restriction of $Q$ after identifying $\D^\co(\Gcal)\cong
\K(\inj\Gcal)$.


\begin{rmk}
	There is a different definition of a coderived category in the literature,
	which is due to Positselski \cite{Pos11}. The two definitions are known to
	coincide in many situations, for example if the underlying Grothendieck
	category is locally noetherian \cite[\S3.7]{Pos11}, but it seems to be an
	open question even for module categories whether they coincide in general
	(see e.g. \cite[Example 2.5(3)]{Pos20}). However, as we will see in
	Corollary~\ref{cor:poscoderived}, for the locally Grothendieck categories we
	are most interested in, that is the hearts of intermediate restrictable
	$t$-structure over commutative noetherian rings, the two definitions of a
	coderived category are indeed equivalent, and so there is no need to
	distinguish them.
\end{rmk}

\subsection{Derivators}\label{sec:derivators}

For some of our arguments to work correctly, we will need to consider
$\D(\Gcal)$ enhanced with the structure of a stable derivator. For basics about
the standard derivator of a Grothendieck category which covers most of what is
needed in our application see e.g. \cite{St2} or \cite[Appendix]{HN} and the
references therein. Here we recall only some particular aspects and terminology
of the theory.

Let $\CAT$ denote the large 2-category of all categories, $\Cat$ denote the
2-category of all small categories and $\Gcal$ be a Grothendieck category. For
any $I \in \Cat$ we consider the Grothendieck category $\Gcal^I$ of all
$I$-shaped diagrams in $\Gcal$, that is, of all functors $I \rightarrow \Gcal$.
Since $\C(\Gcal^I)$ is naturally isomorphic to $\C(\Gcal)^I$, we can view
objects of $\D(\Gcal^I)$ as $I$-shaped diagrams in the category $\C(\Gcal)$ of
cochain complexes. The \newterm{standard derivator} of $\Gcal$ is a $2$-functor
$\mathfrak{D}_\Gcal: \Cat^{\mathrm{op}} \rightarrow \CAT$ satisfying several
properties. First, for any small category $I$, the image $\mathfrak{D}_\Gcal(I)$
is the triangulated category $\D(\Gcal^I)$. In particular, if $\star$ denotes
the category with a single object and a single morphism, we see that
$\mathfrak{D}_\Gcal(\star)$ recovers the derived category $\D(\Gcal)$. Another
property is that given any morphism $u: I \rightarrow J$ in $\Cat$, the induced
functor $\mathfrak{D}_{\Gcal}(u): \mathfrak{D}_{\Gcal}(J) \rightarrow
\mathfrak{D}_{\Gcal}(I)$ is triangulated and it admits both a left and a
right adjoint which are called the \newterm{left and right Kan homotopy
extensions} along $u$. For the full definition of an abstract stable derivator,
we refer the reader e.g. to \cite[Definition~5.9, Definition~5.11]{St2}.

For any small category $I$ and any object $i \in I$, let $\iota_i: \star
\rightarrow I$ denote the unique functor which maps the only object of $\star$
to $i$. The collection of functors $\mathfrak{D}_\Gcal(\iota_i):
\mathfrak{D}_\Gcal(I) \rightarrow \mathfrak{D}_\Gcal(\star)$ induce a functor
$\mathrm{diag}_I: \D(\Gcal^I) = \mathfrak{D}_\Gcal(I) \rightarrow \mathfrak{D}_\Gcal(\star)^I
= \D(\Gcal)^I$ called the \newterm{diagram functor}. It is essentially the
reason why the theory of derivators exists that the diagram functor is usually
\emph{not} an equivalence. We call objects of $\mathfrak{D}_\Gcal(I)$ the
\newterm{coherent diagrams} of shape $I$ in $\D(\Gcal)$, in contrast with
objects of the diagram category $\mathfrak{D}_\Gcal(\star)^I$ which are
sometimes called \newterm{incoherent diagrams}. It is convenient to denote for
any coherent diagram $\Xcal \in \mathfrak{D}_\Gcal(I)$ by
$\Xcal_i:=\mathrm{diag}_I(\Xcal)(i)$ the $i$-th
coordinate of the incoherent diagram $\mathrm{diag}_I(\Xcal)$.

For any small category $I$, denote the unique functor $I \rightarrow \star$ by
$\pi_I$. The left Kan extension along $\pi_I$ has a special name --- it is the
\newterm{homotopy colimit} functor $\hocolim_I: \D(\Gcal^I)=\mathfrak{D}_\Gcal(I)
\rightarrow \mathfrak{D}_\Gcal(\star) = \D(\Gcal)$, and it is equivalent to the
left derived functor of the colimit functor $\Gcal^I\to \Gcal$. In particular, if $I$ is a directed category, the
associated homotopy colimit functor $\hocolim_I$ is computed on a diagram $\Xcal
\in \D(\Gcal^I)$ simply by computing the direct limit $\varinjlim_I(\Xcal)$ of
the diagram $\Xcal \in \C(\Gcal^I)=\C(\Gcal)^I$ inside the Grothendieck category
$\C(\Gcal)$.

There is also a notion of a morphism and equivalence between derivators, we
refer the reader to \cite[\S5]{St2} and \cite{MG}. For our purposes, it will be
enough to say that if $\Gcal,\Ecal$ are two Grothendieck categories, then a
\newterm{morphism} of derivators $\eta: \mathfrak{D}_\Gcal \rightarrow
\mathfrak{D}_{\Ecal}$ induces functors $\eta^I: \mathfrak{D}_\Gcal(I)
\rightarrow \mathfrak{D}_{\Ecal}(I)$ such that for each morphism $u: I
\rightarrow J$ the following square commutes (up to natural equivalence):
\begin{equation}\label{square-derivator}
\begin{tikzcd}
	\mathfrak{D}_\Gcal(J)  \arrow{d}[swap]{\eta^J}\arrow{r}{\mathfrak{D}_\Gcal(u)} & \mathfrak{D}_\Gcal(I) \arrow{d}[swap]{\eta^I}\\
	\mathfrak{D}_{\Ecal}(J) \arrow{r}{\mathfrak{D}_{\Ecal}(u)} & \mathfrak{D}_{\Ecal}(I)
\end{tikzcd}
\end{equation}

The morphism of derivators $\eta$ is an \newterm{equivalence} if all the
functors $\eta^I$ are equivalences. If this is the case, then $\eta$ is an
honest equivalence in a suitable category of derivators
\cite[Proposition~2.11]{MG}, and all the equivalences $\eta^I$ are triangle
equivalences \cite[Proposition~5.12]{St2}. Furthermore, if $\eta$ is an
equivalence then one can check by passing to adjoint functors that $\eta$ is
also compatible with left and right Kan extensions along any morphism $u$ in
$\Cat$. In particular, we get the commutative square for any $I \in \Cat$:
\begin{equation}\label{square-derivator-hocolim}
\begin{tikzcd}
	\mathfrak{D}_\Gcal(I)  \arrow{d}{\cong}[swap]{\eta^I}\arrow{r}{\hocolim_I} & \mathfrak{D}_\Gcal(\star) \arrow{d}{\cong}[swap]{\eta^\star}\\
	\mathfrak{D}_{\Ecal}(I) \arrow{r}{\hocolim_I} & \mathfrak{D}_{\Ecal}(\star)
\end{tikzcd}
\end{equation}

Note that since cohomology is computed coordinate-wise, an object $\Xcal$ of the
bounded derived category $\D^b(\Gcal^I)$ is an $I$-shaped diagram in $\C(\Gcal)$
such that the cohomologies of the coordinates $\Xcal_i$ are uniformly bounded,
that is, there are integers $l<m$ such that $H^j(\Xcal_i) = 0$ for all $j<l$ or
$j>m$ and all $i \in I$. By the exactness of direct limits in $\C(\Gcal)$, we
see that for any small directed category $I$ the homotopy colimit functor
restricts to a functor $\hocolim_I: \D^b(\Gcal^I) \to \D^b(\Gcal)$. We say that
an equivalence of standard derivators $\eta: \mathfrak{D}_\Gcal \rightarrow
\mathfrak{D}_{\Ecal}$ is \newterm{bounded} if for any small category $I$ the
triangle equivalence $\eta^I$ restricts to a triangle equivalence $\eta^I:
\D^b(\Gcal^I) \rightarrow \D^b(\Ecal^I)$. If $I$ is directed, the above
commutative square restricts to another one:
\[ \begin{tikzcd}
	\D^b(\Gcal^I)  \arrow{d}{\cong}[swap]{\eta^I}\arrow{r}{\hocolim_I} &
		\D^b(\Gcal) \arrow{d}{\cong}[swap]{\eta^\star}\\
	\D^b(\Ecal^I) \arrow{r}{\hocolim_I} & \D^b(\Ecal)
\end{tikzcd}\]

In our context, equivalences of standard derivators will appear in the form of
enhancements of (unbounded) realization functors. If $\Tbb$ is a $t$-structure
in $\D(\Gcal)$ with heart $\Hcal$ satisfying certain assumptions, Virili
constructs in \cite[Theorem B, \S6]{Vi} a morphism $\mathfrak{real}_\Tbb:
\mathfrak{D}_\Hcal \rightarrow \mathfrak{D}_\Gcal$ between standard derivators
such that the functor $\mathfrak{real}_\Tbb^\star: \D(\Hcal) \rightarrow
\D(\Gcal)$ is triangulated and restricts to a realization functor $\D^b(\Hcal)
\rightarrow \D(\Gcal)$. We will discuss the cases when this occurs in
(co)tilting theory in \cref{sec:silting}.

\subsection{Intermediate and standard $t$-structures in
	$\D(\Gcal)$}\label{prelim:intermediate}

Let $\Gcal$ be an abelian category. For any integer $n \in \mathbb{Z}$, there is
a $t$-structure $(\D^{\leq n},\D^{> n})$, where $\D^{\leq n} = \{X \in \D(\Gcal)
\mid H^i(X) = 0 ~\forall i> n\}$ and $\D^{> n} = \{X \in \D(\Gcal) \mid H^i(X) =
0 ~\forall i\leq n\},$ called the ($n$-th shift of the) \newterm{standard
$t$-structure}. The left truncation functor $\tau_{\D^{\leq n}}$ is induced by
the \newterm{soft truncation} of complexes and denoted simply by $\tau^{\leq
n}$, similarly the right truncation is the soft truncation functor $\tau^{> n}$.

A $t$-structure $\Tbb = (\Ucal,\Vcal)$ in $\D(\Gcal)$ is \newterm{intermediate}
if there are integers $l<m$ such that $\D^{\leq l} \subseteq \Ucal \subseteq
\D^{\leq m}$, or equivalently, $\D^{> m} \subseteq \Vcal \subseteq \D^{> l}$. It
is easy to see that the intermediacy of the $t$-structure $\Tbb$ yields that the
realization functor $\real^b_{\Tbb}: \D^b(\Hcal_{\Tbb}) \rightarrow \D(\Gcal)$
corestricts to a functor $\real^b_{\Tbb}: \D^b(\Hcal_{\Tbb}) \rightarrow
\D^b(\Gcal)$ between the bounded derived categories.

\section{Krause's recollement for locally coherent Grothendieck categories}

As mentioned in the Introduction, Krause proved the following theorem:
\begin{thm}[{\cite[Corollary~4.3]{K05}}]
	Let $\Gcal$ be a locally noetherian Grothendieck category such that
	$\D(\Gcal)$ is compactly generated. Then there is a recollement
	\[\smallrecoll{\Kac(\inj \Gcal)}{\K(\inj \Gcal)}{\D(\Gcal)}.\]
\end{thm}

Our goal in this section is to prove the same result for $\Gcal$ locally
coherent, rather than locally noetherian.
Such a result was already established by Šťovíček \cite[Theorem 7.7]{St}, but under the additional
assumption that $\Gcal$ admits a set of finitely presented generators of finite
projective dimension (see \cite[Hypothesis 7.1]{St}).  This assumption implies
that $\D(\Gcal)$ is compactly generated, but it is strictly stronger: we
demonstrate it with an example, which is a Happel-Reiten-Smal\o~ tilt in the derived
category of a commutative noetherian ring, in \cref{example:hyp7.1}. Our
approach here is closer to the original one of Krause, but relies on some of the
results of Šťovíček \cite[Section 6]{St} (these do not depend on the
aforementioned \cite[Hypothesis 7.1]{St}).

Recall that $\fp \Gcal$ denotes the subcategory of all finitely presented
objects of $\Gcal$, an exact abelian subcategory in case $\Gcal$ is locally
coherent. Our starting point is the following result of Šťovíček.

\begin{thm}\cite[Corollary 6.13]{St}\label{theorem:stovicek}
	Let $\Gcal$ be a locally coherent Grothendieck category. Then $\K(\inj
	\Gcal)$ is compactly generated and the functor assigning to an object of
	$\D^b(\fp \Gcal)$ its injective resolution induces an equivalence
	$\D^b(\fp \Gcal) \cong  \K(\inj \Gcal)^c$.
\end{thm}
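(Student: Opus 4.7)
The plan is to construct a fully faithful triangle functor $\iota\colon \D^b(\fp\Gcal) \to \K(\inj\Gcal)$ via injective resolutions, show that $\iota(\fp\Gcal)$ consists of compact generators of $\K(\inj\Gcal)$, and deduce the desired equivalence $\D^b(\fp\Gcal) \simeq \K(\inj\Gcal)^c$ from standard principles about compact generation (noting that $\D^b(\fp\Gcal)$ is automatically idempotent complete, since $\fp\Gcal$ is abelian).

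For $X \in \D^b(\fp\Gcal)$, choose a Cartan--Eilenberg-style injective resolution $iX$, which is bounded below with injective components, hence $K$-injective. Uniqueness up to chain homotopy makes this assignment into a triangle functor $\iota$. For full faithfulness, observe that
\[\Hom_{\K(\inj\Gcal)}(\iota X,\iota Y)=\Hom_{\K(\Gcal)}(\iota X,\iota Y)=\Hom_{\D(\Gcal)}(X,Y)=\Hom_{\D^b(\fp\Gcal)}(X,Y);\]
the second equality uses $K$-injectivity of $\iota Y$, and the last uses that local coherence makes $\fp\Gcal$ a thick abelian subcategory of $\Gcal$, so that $\D^b(\fp\Gcal)\to\D(\Gcal)$ is itself fully faithful.

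The main obstacle, and the technical heart of the argument, is to establish (a) $\iota X$ is compact in $\K(\inj\Gcal)$ for every $X\in\fp\Gcal$, and (b) the set $\iota(\fp\Gcal)$ generates $\K(\inj\Gcal)$ as a triangulated category closed under coproducts. The naive attempt fails because $X$ need not be compact in $\D(\Gcal)$, and morphisms $\iota X\to Y$ in $\K(\inj\Gcal)$ cannot simply be identified with morphisms in $\D(\Gcal)$ when $Y$ is not $K$-injective. The route of \cite[\S 6]{St}, which I would follow, is to invoke the coderived (Becker) model structure on $\C(\Gcal)$ whose homotopy category is $\K(\inj\Gcal)$, and which for locally coherent $\Gcal$ is cofibrantly generated with generating (trivial) cofibrations built from $\fp\Gcal$. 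Compactness of $\iota X$ then stems from the small-object property of $X$ (namely $\Hom_\Gcal(X,-)$ commutes with directed colimits), once transported through the description of coproducts in $\K(\inj\Gcal)$ as the right truncation $I_\lambda$ of termwise coproducts in $\C(\Gcal)$. Compact generation is a consequence of the general cofibrantly-generated principle that every object is weakly equivalent to a cell complex assembled from the generating cofibrations, which after passing to the homotopy category translates into triangulated generation by $\iota(\fp\Gcal)$.

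Once (a) and (b) are in hand, the essential image of $\iota$ lies in $\K(\inj\Gcal)^c$, which is a thick subcategory. By the standard theorem that in a compactly generated triangulated category the subcategory of compact objects coincides with the thick closure of any set of compact generators, $\K(\inj\Gcal)^c$ is the thick closure of $\iota(\fp\Gcal)$. On the other hand, every object of $\D^b(\fp\Gcal)$ is built from $\fp\Gcal$ by finitely many triangles (using bounded cohomology together with thickness of $\fp\Gcal$), so $\iota(\D^b(\fp\Gcal))$ already exhausts this thick closure. Combined with full faithfulness and idempotent completeness of $\D^b(\fp\Gcal)$, this yields the asserted equivalence $\D^b(\fp\Gcal)\simeq\K(\inj\Gcal)^c$ and the compact generation of $\K(\inj\Gcal)$.
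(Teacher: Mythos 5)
The paper offers no proof of this statement: it is imported as a black box from \cite[Corollary~6.13]{St}, so there is no internal argument to compare yours against. Your outline is a correct reconstruction of Šťovíček's proof — the fully faithful $\iota$ via bounded-below injective resolutions, compactness of $\iota(\fp\Gcal)$ from finite presentability together with the fact that coproducts in $\K(\inj\Gcal)$ are $I_\lambda$ applied to termwise coproducts (which, by local coherence, remain complexes of fp-injectives, the point where the hypothesis is really used), generation via the deconstructibility of the coacyclic class, and the closing thick-closure/idempotent-completeness bookkeeping — with the two genuinely hard steps correctly attributed to the machinery of \cite[\S 6]{St} rather than reproved.
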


\begin{cor}\label{corollary:rightadjoint}
	Let $\Gcal$ be a locally coherent Grothendieck category. Then the
	functor $Q: \K(\inj \Gcal) \rightarrow \D(\Gcal)$ admits a right adjoint
	$Q_r$.

	Furthermore, the equivalence $\D^b(\fp \Gcal) \cong \K(\inj \Gcal)^c$
	of \cref{theorem:stovicek} is induced by the restrictions of the adjoint
	functors $Q_r$ and $Q$.
\end{cor}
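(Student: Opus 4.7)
The plan is to obtain $Q_r$ via Brown representability and then identify its restriction to $\D^b(\fp\Gcal)$ with the equivalence of \cref{theorem:stovicek} by a compactness argument.

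For the existence of $Q_r$: by \cref{theorem:stovicek}, $\K(\inj\Gcal)$ is compactly generated, so it has all small coproducts. Under the identification $\K(\inj\Gcal) \cong \D^\co(\Gcal)$ recalled in the preliminaries, the functor $Q$ realizes $\D(\Gcal)$ as the triangulated (Verdier) quotient of $\K(\inj\Gcal)$ by its subcategory of acyclic objects, hence is a coproduct-preserving triangle functor. Neeman's Brown representability theorem then furnishes the right adjoint $Q_r$.

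For the identification: let $F\colon \D^b(\fp\Gcal) \toeq \K(\inj\Gcal)^c$ denote the equivalence of \cref{theorem:stovicek}. Since $F(X)$ is by construction a complex of injectives quasi-isomorphic to $X$, there is a natural isomorphism $QF \cong \iota$, where $\iota\colon \D^b(\fp\Gcal) \hookrightarrow \D(\Gcal)$ is the fully faithful inclusion. Transporting this isomorphism through the adjunction $Q \dashv Q_r$ produces a natural morphism $\alpha_X\colon F(X) \to Q_r\iota(X)$ in $\K(\inj\Gcal)$. I claim $\alpha_X$ is an isomorphism; by compact generation of $\K(\inj\Gcal)$ it suffices to verify that $\Hom_{\K(\inj\Gcal)}(C,\alpha_X)$ is an isomorphism for every $C \in \K(\inj\Gcal)^c$. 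On the target, the adjunction yields $\Hom_{\K(\inj\Gcal)}(C, Q_r \iota X) \cong \Hom_{\D(\Gcal)}(QC, X)$. On the source, using that $F$ is an equivalence, that $\iota$ is fully faithful, and that $QF \cong \iota$, one computes
\[\Hom_{\K(\inj\Gcal)}(C, F(X)) \cong \Hom_{\D^b(\fp\Gcal)}(F^{-1}C, X) \cong \Hom_{\D(\Gcal)}(QC, X),\]
and these identifications match under $\alpha_X$ by the defining property of the adjunction.

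It follows that $Q_r$ restricted to $\D^b(\fp\Gcal)$ lands in $\K(\inj\Gcal)^c$ and is isomorphic to $F$, while the inverse equivalence $\K(\inj\Gcal)^c \to \D^b(\fp\Gcal)$ is induced by $Q$, because $QF \cong \iota$. There is no conceptually difficult step: the only care point is the bookkeeping ensuring that the abstract adjunction isomorphism naturally agrees with the concrete injective-resolution isomorphism $QF \cong \iota$, which is automatic from the universal property of the right adjoint.
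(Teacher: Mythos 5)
Your proof is correct. The first half coincides with the paper's argument: compact generation of $\K(\inj\Gcal)$ from \cref{theorem:stovicek}, preservation of coproducts by $Q$, and Neeman's Brown representability to produce $Q_r$. (Your justification that $Q$ preserves coproducts — as a Verdier quotient by the acyclics — is fine, though you should note that this kernel is closed under the coproducts of $\K(\inj\Gcal)$, which holds because $\Gcal$ has exact coproducts and the reflector $I_\lambda$ has acyclic, indeed coacyclic, ``cones''; the paper's own justification is equally terse.) For the second half you take a genuinely different, and slightly longer, route: the paper observes that the adjunction forces $Q_r(X)$ to be homotopy equivalent to a dg-injective resolution of $X$ (since a dg-injective complex represents $\Hom_{\D(\Gcal)}(Q(-),X)$ on $\K(\inj\Gcal)$ by Yoneda), and then the identification with Šťovíček's injective-resolution equivalence is immediate. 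You instead build the comparison map $\alpha_X\colon F(X)\to Q_r\iota(X)$ from $QF\cong\iota$ and test it against compact objects. Both are valid; the paper's version is shorter but invokes the existence of dg-injective resolutions, whereas yours runs entirely on \cref{theorem:stovicek} and compact generation, at the cost of the final compatibility check between the adjunction isomorphism and the isomorphism $QF\cong\iota$, which you correctly flag and which is indeed automatic from the universal property.
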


\begin{proof}
	 By \cref{theorem:stovicek}, $\K(\inj \Gcal)$ is compactly generated.
	 Since $\Gcal$ has exact coproducts, the functor $Q$ preserves
	 coproducts, and so \cite[Theorem 4.1]{Nee96} applies and produces the
	 desired right adjoint.

	It follows directly from the adjunction that for any $X \in \D(\Gcal)$,
	$Q_r(X)$ is homotopy equivalent to a dg-injective resolution of $X$ (which
	exists by \cite{Ser}). By \cref{theorem:stovicek} we have that $Q_r(X)$
	restricts to the equivalence $\D^b(\fp \Gcal) \cong  \K(\inj \Gcal)^c$ with
	the inverse equivalence being the restriction of $Q$ to $ \K(\inj \Gcal)^c$.
\end{proof}

\subsection{Compact objects of $\D(\Gcal)$ and the (small) singularity category}

The main obstacle in extending Krause's proof to the locally coherent case is
showing that any compact object of $\D(\Gcal)$ belongs to $\D^b(\fp \Gcal)$, and
therefore representes a compact object also in $\K(\inj \Gcal)$ via $Q_r$; the
proof in the locally noetherian case \cite[Lemma 4.1]{K05} does not generalize
directly.

Following Gillespie \cite{Gill}, an object $M$ of a Grothendieck category
$\Gcal$ is said to be \emph{of type $\FPinf$} if the functor $\Ext_\Gcal^i(M,-)$
naturally preserves direct limits for all $i \geq 0$. It will be convenient for
our purposes to extend this notion to any object of the bounded derived
category.

\begin{dfn}\label{def:fpinf}
	Let $\Gcal$ be a Grothendieck category. An object $X \in \D^b(\Gcal)$ is
	\newterm{of type $\FPinf$} if for any direct system $(M_i \mid i \in I)$ in
	$\Gcal$ and any $n \in \Zbb$ the natural map
	\[\varinjlim_{i \in I}\Hom_{\D^b(\Gcal)}(X,M_i[n]) \rightarrow
		\Hom_{\D^b(\Gcal)}(X,\varinjlim_{i \in I} M_i[n])\]
	is an isomorphism.
\end{dfn}

Not very surprisingly, \cref{def:fpinf} admits a somewhat more internal
characterization using homotopy colimits of bounded directed coherent diagrams,
which in turn provides a ``bounded'' version of the following notion from the
theory of stable derivators.

\begin{dfn}[{\cite[Definition~5.1]{SSV}}]
	Given a directed small category $I$, $X \in \D(\Gcal)$, and a coherent
	diagram of shape $I$, $\Ycal \in
	\D(\Gcal^I)$, there is a natural map (see \cite[Definition~6.5]{St2})
	\[\varinjlim_{i \in I}\Hom_{\D(\Gcal)}(X,\Ycal_i) \rightarrow
		\Hom_{\D(\Gcal)}(X,\hocolim_I \Ycal).\]
	An object $X \in \D(\Gcal)$ is called \newterm{homotopically finitely
	presented} if the map above is an isomorphism for any choice of $I$ and
	$\Ycal$.
\end{dfn}

\begin{lemma}\label{lemma:hcompact-1}
	Let $\Gcal$ be a Grothendieck category. An object $X \in \D^b(\Gcal)$ is of
	type $\FPinf$ if and only if for any directed small category $I$ and any
	coherent diagram $\Ycal \in \D^b(\Gcal^I)$ the natural map
	\[\varinjlim_{i \in I}\Hom_{\D^b(\Gcal)}(X,\Ycal_i) \rightarrow
		\Hom_{\D^b(\Gcal)}(X,\hocolim_I \Ycal)\]
	is an isomorphism.
\end{lemma}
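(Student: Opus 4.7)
The plan is to establish the nontrivial ``only if'' direction by induction on the cohomological amplitude of $\Ycal \in \D^b(\Gcal^I)$, while the ``if'' direction follows immediately by specializing the general condition to diagrams concentrated in a single degree.

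For the ``if'' direction, given a direct system $(M_i \mid i \in I)$ in $\Gcal$ and $n \in \Zbb$, I will view $(M_i[n] \mid i\in I)$ as a coherent diagram in $\D^b(\Gcal^I)$. Since $I$ is directed and direct limits are exact in $\Gcal$, the homotopy colimit $\hocolim_I$ coincides with the ordinary direct limit, and the assumed isomorphism reproduces precisely the map from \cref{def:fpinf}.

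For the ``only if'' direction, assume $X$ is of type $\FPinf$, and consider the natural transformation
\[\phi_\Ycal \colon \varinjlim_{i\in I}\Hom_{\D^b(\Gcal)}(X,\Ycal_i) \longrightarrow \Hom_{\D^b(\Gcal)}(X,\hocolim_I\Ycal).\]
I will induct on the number $n(\Ycal)$ of cohomological degrees in which $H^*(\Ycal)\neq 0$. The base case $n(\Ycal)=1$ reduces after a shift to $\Ycal \in \Gcal^I$ viewed as a coherent diagram in degree zero; since evaluation at each $i\in I$ is exact, $\Ycal_i \cong M_i$, and since directed colimits are exact in $\Gcal$, $\hocolim_I\Ycal \cong \varinjlim_i M_i$, so the required isomorphism at every shift $[k]$ is exactly the $\FPinf$ hypothesis. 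For the inductive step, let $m$ be the top degree with $H^m(\Ycal)\neq 0$ and consider the standard truncation triangle
\[\tau^{<m}\Ycal \longrightarrow \Ycal \longrightarrow H^m(\Ycal)[-m] \longrightarrow (\tau^{<m}\Ycal)[1]\]
in $\D^b(\Gcal^I)$, whose outer terms have strictly smaller $n(-)$. Evaluating this triangle coordinate-wise (evaluation at $i\in I$ being exact on $\Gcal^I$), applying $\Hom_{\D^b(\Gcal)}(X,-)$, and then the exact functor $\varinjlim_i$ produces a long exact sequence computing the domain of $\phi$. Meanwhile $\hocolim_I$ is triangulated, so the triangle maps to a triangle in $\D(\Gcal)$ to which $\Hom_{\D^b(\Gcal)}(X,-)$ produces a parallel long exact sequence for the codomain. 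Naturality of $\phi$ assembles these into a commutative ladder, and the inductive hypothesis together with the five-lemma completes the step.

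The main technical point is checking that the truncation triangle interacts correctly with both coordinate evaluation and $\hocolim_I$, so that the two parallel long exact sequences required by the five-lemma are available and aligned; this relies only on the exactness of evaluation and the fact that $\hocolim_I$ is a triangulated functor. A minor preliminary is well-definedness of the natural transformation $\phi$, which comes from the canonical maps $\Ycal_i \to \hocolim_I\Ycal$ supplied by the unit of the adjunction defining $\hocolim_I$ as the left Kan extension along $\pi_I\colon I \to \star$.
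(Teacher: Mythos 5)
Your proposal is correct and follows essentially the same strategy as the paper's proof: induction on the cohomological amplitude of $\Ycal$ using a soft truncation triangle of coherent diagrams, compatibility of truncation with the coordinate functors and with $\hocolim_I$, and the five lemma applied to the resulting ladder of long exact sequences (the paper truncates at the bottom degree rather than peeling off the top cohomology, an immaterial difference). Your explicit treatment of the ``if'' direction, which the paper leaves implicit, is also fine.
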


\begin{proof}
	Since $\Ycal$ belongs to $\D^b(\Gcal^I)$, the coherent diagram $\Ycal$ is represented
	by a direct system $(Y_i \mid i \in I)$ in $\C(\Gcal)$ such that the
	cohomology of the complexes $Y_i$ is uniformly bounded. Therefore, there is
	$n \in \Zbb$ and $k \geq 0$ such that for all $i \in I$, the cohomology of
	$Y_i$ vanishes outside of degrees $n,\ldots,n+k$.  If $k=0$, by applying the
	soft truncation we may assume that $\Ycal$ is such that $(Y_i \mid i \in I)$
	is a direct system of stalk complexes in degree $n$, and therefore the
	required isomorphism is provided by the definition of an object of type
	$\FPinf$. The general case follows by induction on $k>0$.  Indeed, applying
	$\hocolim_I$ to the soft truncation triangle of $\Ycal$ in $\D^b(\Gcal^I)$
	we obtain the triangle
	\[\hocolim_I\tau^{\leq {n}}\Ycal \rightarrow \hocolim_I\Ycal \rightarrow
		\hocolim_I\tau^{> {n}}\Ycal \xrightarrow{+}\]
	in $\D^b(\Gcal)$. Notice that soft truncations commute naturally with the
	component functors $(-)_i$, and we have triangles in $\D^b(\Gcal)$
	\[\tau^{\leq {n}}\Ycal_i \rightarrow \Ycal_i \rightarrow \tau^{>
		{n}}\Ycal_i \xrightarrow{+}.\]
	
	There is the following commutative diagram, in which the horizontal maps
	are induced by the two triangles above and the vertical ones are the natural
	maps (we write $(A,B):=\Hom_{\D(\Gcal)}(A,B)$, to lighten the
	notation):
	\begin{center}
		\hspace*{-0.6pc}
	\adjustbox{scale = 0.9}{%
		\begin{tikzcd}[row sep = small, column sep = tiny]
	\varinjlim_{i\in I}(X,\tau^{> {n}}\Ycal_i[-1]) \arrow{d}\arrow{r} & \varinjlim_{i\in I}(X,\tau^{\leq{n}}\Ycal_i) \arrow{d}\arrow{r} & \varinjlim_{i\in I}(X,\Ycal_i) \arrow{d}\arrow{r} & \varinjlim_{i\in I}(X,\tau^{> {n}}\Ycal_i)  \arrow{d}\arrow{r} & \varinjlim_{i\in I}(X,\tau^{\leq{n}}\Ycal_i[1]) \arrow{d} \\
	(X,\hocolim_I \tau^{> {n}}\Ycal[-1]) \arrow{r} & (X,\hocolim_I \tau^{\leq n}\Ycal) \arrow{r} & (X,\hocolim_I \Ycal) \arrow{r} & (X,\hocolim_I \tau^{> {n}}\Ycal) \arrow{r} & (X,\hocolim_I \tau^{\leq {n}}\Ycal[1])
\end{tikzcd}
		}
\end{center}

	 Then the induction step follows directly by Five lemma, as both the
	 coherent diagrams $\tau^{>n} \Ycal$ and $\tau^{\leq n}\Ycal$ are subject to
	 the induction hypothesis for $k-1$.
\end{proof}

\begin{lemma}\label{lemma:hcompact-2}
	Let $\Gcal$ be a Grothendieck category. The objects of type $\FPinf$ of $\D^b(\Gcal)$ form a thick subcategory of $\D^b(\Gcal)^c$.
\end{lemma}

\begin{proof}
	By exactness of coproducts in $\Gcal$, the coproducts in $\D^b(\Gcal)$ are
	precisely the coproducts of collections of objects with uniformly bounded
	cohomology, computed in $\D(\Gcal)$. Therefore, any coproduct in
	$\D^b(\Gcal)$ can be realized as a directed homotopy colimit of a suitable
	diagram of $\D^b(\Gcal^I)$ whose components are finite subcoproducts. In
	this way \cref{lemma:hcompact-1} shows that any object of type $\FPinf$ in
	$\D^b(\Gcal)$ is compact in $\D^b(\Gcal)$. The fact that objects of type
	$\FPinf$ form a thick subcategory follows from the Five lemma
	similarly as in the proof of \cref{lemma:hcompact-1}, the closure under
	retracts is clear.
\end{proof}

\begin{lemma}\label{lemma:fp-equal-hcompact}
	Let $\Gcal$ be a locally coherent Grothendieck category. An object $X \in
	\D^b(\Gcal)$ is of type $\FPinf$ if and only if $X \in \D^b(\fp \Gcal)$.
\end{lemma}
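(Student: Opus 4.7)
The plan is to prove the two implications separately: the $(\Leftarrow)$ direction by a dévissage down to stalk complexes combined with a classical fact about locally coherent categories, and the $(\Rightarrow)$ direction by induction on cohomological amplitude using the top-cohomology truncation triangle.

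For the $(\Leftarrow)$ direction, observe first that by \cref{lemma:hcompact-2} the objects of $\D^b(\Gcal)$ of type $\FPinf$ form a thick subcategory, and that $\D^b(\fp \Gcal)$ is itself the thick subcategory of $\D^b(\Gcal)$ generated by the stalk complexes of objects in $\fp \Gcal$. Thus it is enough to prove that each $M \in \fp \Gcal$, viewed as a stalk complex, is of type $\FPinf$, i.e.\ that $\Ext^n_{\Gcal}(M,-)$ commutes with direct limits for every $n \geq 0$ (the negative $n$ being vacuous from \cref{def:fpinf}). This is the classical fact that finitely presented objects in a locally coherent Grothendieck category are of type $\FPinf$ (see, e.g., \cite{Gill}); it relies on $\fp \Gcal$ being an abelian subcategory, which allows an inductive syzygy argument in which every successive syzygy of $M$ remains finitely presented.

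For the $(\Rightarrow)$ direction, let $X \in \D^b(\Gcal)$ be of type $\FPinf$ and proceed by induction on its amplitude (the case $X = 0$ being trivial). Let $b$ be the top cohomological degree of $X$ and consider the truncation triangle
\[ \tau^{<b} X \longrightarrow X \longrightarrow H^b(X)[-b] \xrightarrow{\ +\ }. \]
For any $N \in \Gcal$, apply $\Hom_{\D(\Gcal)}(-, N[-b])$ to this triangle. Both $N[-b]$ and $N[-b-1]$ lie in $\D^{\geq b}$, while $\tau^{<b} X \in \D^{\leq b-1}$, so the standard $t$-structure orthogonality $\Hom_{\D(\Gcal)}(\D^{\leq b-1}, \D^{\geq b}) = 0$ kills the two Hom-terms involving $\tau^{<b} X$. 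What remains is a natural isomorphism
\[ \Hom_{\D(\Gcal)}(X, N[-b]) \cong \Hom_{\Gcal}(H^b(X), N).\]
Since $X$ is of type $\FPinf$ the left-hand side preserves direct limits in $N$, hence so does $\Hom_{\Gcal}(H^b(X), -)$, showing $H^b(X) \in \fp \Gcal$. Applying the $(\Leftarrow)$ direction already proved, $H^b(X)[-b]$ is itself of type $\FPinf$; combined with the triangle and the thickness of objects of type $\FPinf$ (\cref{lemma:hcompact-2}), this shows $\tau^{<b} X$ is of type $\FPinf$ too. Its amplitude is strictly smaller than that of $X$, so by the induction hypothesis $\tau^{<b} X \in \D^b(\fp \Gcal)$, and the triangle finally gives $X \in \D^b(\fp \Gcal)$.

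I expect the only genuine difficulty to lie in the classical input for the $(\Leftarrow)$ direction: namely, that local coherence (and not merely local finite presentation) of $\Gcal$ is precisely what ensures $\Ext^n_{\Gcal}(M, -)$ commutes with direct limits for all $n \geq 0$ when $M \in \fp \Gcal$. The induction in the reverse direction is purely formal once this fact and \cref{lemma:hcompact-2} are granted; in particular, the key identification $\Hom_{\D(\Gcal)}(X, N[-b]) \cong \Hom_{\Gcal}(H^b(X), N)$ is just standard truncation bookkeeping.
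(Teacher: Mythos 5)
Your proof is correct and follows essentially the same route as the paper's: the $(\Leftarrow)$ direction via \cref{lemma:hcompact-2} and Gillespie's result that finitely presented objects of a locally coherent category are of type $\FPinf$, and the $(\Rightarrow)$ direction via the top-truncation triangle, the isomorphism $\Hom_{\D(\Gcal)}(X,N[-b])\cong\Hom_{\Gcal}(H^b(X),N)$, and downward induction. The only cosmetic difference is that the paper concludes by observing all cohomologies lie in $\fp\Gcal$ and invoking the standard identification of $\D^b(\fp\Gcal)$ with complexes having finitely presented cohomology, which your phrasing also uses implicitly when describing $\D^b(\fp\Gcal)$ as the thick subcategory generated by stalk complexes.
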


\begin{proof}
	An object $F \in \fp \Gcal$ is of type $\FPinf$ as an object in
	$\D^b(\Gcal)$, see \cite[Theorem 3.21]{Gill}. By \cref{lemma:hcompact-2},
	any object in the thick closure of $\fp \Gcal$ in $\D^b(\Gcal)$ is of type
	$\FPinf$, which shows that $X \in \D^b(\fp \Gcal)$ implies that $X$ is of
	type $\FPinf$.

	For the converse implication, let $X \in \D^b(\Gcal)$ be of type $\FPinf$
	and let $n$ be a maximal integer such that $H^n(X) \neq 0$. For any $M \in
	\Gcal$ the soft truncation yields a natural isomorphism
	$\Hom_{\D^b(\Gcal)}(X,M[-n]) \cong \Hom_{\Gcal}(H^n(X),M)$. Since $X$ is of
	type $\FPinf$, it follows that the functor $\Hom_{\Gcal}(H^n(X),-): \Gcal
	\to \Mod \mathbb{Z}$ preserves direct limits, and so $H^n(X)$ belongs to
	$\fp \Gcal$. Using the previous paragraph and \cref{lemma:hcompact-2} we
	infer that the soft truncation $\tau^{<n}X$ is of type $\FPinf$. Continuing
	by finite induction we conclude that all cohomologies of $X$ belong to $\fp
	\Gcal$, and so $X \in \D^b(\fp \Gcal)$, see e.g. \cite[Theorem 15.3.1]{KS}.
\end{proof}

\begin{rmk}\label{rem:inclusion-bounded-compact}
	Combining \cref{lemma:fp-equal-hcompact} and \cref{lemma:hcompact-2} we
	obtain the inclusion $\D^b(\fp {\Gcal}) \subseteq \D^b(\Gcal)^c$. We do not
	know whether the converse inclusion holds in general for a locally
	coherent Grothendieck category such that $\D(\Gcal)$ is compactly generated.
	However, in \cref{sec:restrictable}, we will show that these two
	categories coincide in case $\Gcal$ is the heart of an intermediate
	cotilting $t$-structure over a commutative noetherian ring.
\end{rmk}

\begin{prop}\label{prop:compacts-are-bounded}
	Let $\Gcal$ be a locally coherent Grothendieck category. There is an
	inclusion $\D(\Gcal)^c \subseteq \D^b(\fp \Gcal)$.
\end{prop}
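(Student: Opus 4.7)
The plan is to deduce the statement formally from the existence of the right adjoint $Q_r$ established in \cref{corollary:rightadjoint}, together with the identification of the compact objects of $\K(\inj \Gcal)$ with $\D^b(\fp \Gcal)$ from \cref{theorem:stovicek}. The key step will be to show that $Q_r$ itself preserves compact objects. Once that is in hand, any $X \in \D(\Gcal)^c$ satisfies $Q_r(X) \in \K(\inj \Gcal)^c$; under the equivalence $Q \colon \K(\inj \Gcal)^c \toeq \D^b(\fp \Gcal)$ from \cref{corollary:rightadjoint}, this means $Q(Q_r(X)) \in \D^b(\fp \Gcal)$. Since $Q_r(X)$ is homotopy equivalent to a dg-injective resolution of $X$ (again by \cref{corollary:rightadjoint}), we have $Q(Q_r(X)) \cong X$ in $\D(\Gcal)$, so $X \in \D^b(\fp \Gcal)$ as required.

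To prove the preservation of compacts, fix $X \in \D(\Gcal)^c$ and a family $(K_i)_{i \in I}$ of objects in $\K(\inj \Gcal)$; the coproduct $\coprod_i K_i$ exists there because $\K(\inj\Gcal)$ is compactly generated by \cref{theorem:stovicek}. Since $Q \colon \K(\inj \Gcal) \to \D(\Gcal)$ is a left adjoint of $Q_r$, it preserves coproducts, so $Q(\coprod_i K_i) \cong \coprod_i Q(K_i)$ in $\D(\Gcal)$. Combining this with the adjunction $Q \dashv Q_r$ and the compactness of $X$ in $\D(\Gcal)$ yields the chain of natural isomorphisms
\[
  \Hom_{\K(\inj \Gcal)}\bigl(Q_r X, \coprod_i K_i\bigr) \cong \Hom_{\D(\Gcal)}\bigl(X, \coprod_i Q K_i\bigr) \cong \coprod_i \Hom_{\D(\Gcal)}(X, Q K_i) \cong \coprod_i \Hom_{\K(\inj \Gcal)}(Q_r X, K_i),
\]
which is exactly the compactness of $Q_r(X)$ in $\K(\inj \Gcal)$.

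There is no substantial obstacle in this proof: the single adjunction computation above is the whole content of the argument, with everything else being immediate from results already in the excerpt. All the technical machinery — compact generation of $\K(\inj \Gcal)$, existence of $Q_r$, fully-faithfulness of $Q_r$ (so that the counit $QQ_r \to \mathrm{id}$ is invertible), and the equivalence $\K(\inj \Gcal)^c \cong \D^b(\fp \Gcal)$ — was paid for in \cref{theorem:stovicek} and \cref{corollary:rightadjoint}.
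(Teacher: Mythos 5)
The central step of your proposal---showing that $Q_r$ preserves compact objects---rests on the claimed natural isomorphism
\[
\Hom_{\K(\inj \Gcal)}\bigl(Q_r X, \coprod_i K_i\bigr) \cong \Hom_{\D(\Gcal)}\bigl(X, \coprod_i Q K_i\bigr),
\]
which you attribute to the adjunction $Q \dashv Q_r$. This is where the argument breaks down. The adjunction $Q \dashv Q_r$ gives $\Hom_{\D(\Gcal)}(QA,B) \cong \Hom_{\K(\inj\Gcal)}(A,Q_r B)$; here $Q_r$ sits in the \emph{second} variable, whereas your isomorphism needs $Q_r$ in the \emph{first} variable. There is no formal reason for that isomorphism to hold. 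In fact, specialising the coproduct to a single acyclic complex $Y \in \Kac(\inj\Gcal)$, the claim would force $\Hom_{\K(\inj\Gcal)}(Q_r X, Y) = 0$ whenever $QY = 0$, but this is precisely the Tate-cohomology phenomenon which is nonvanishing in the singular situation.

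Worse, the isomorphism you need is exactly the content of \cref{lemma:adjointcompact} in the paper, and the paper's proof of \cref{lemma:adjointcompact} explicitly invokes \cref{prop:compacts-are-bounded} to know that $Q_r C$ is compact for $C \in \D(\Gcal)^c$ (this is what starts the d\'evissage on $Y$). So your proposed route is circular: establishing that $Q_r$ preserves compacts is essentially equivalent to the statement you are trying to prove. The paper avoids the circle by a genuinely different argument: compactness of $C$ is first used together with a product/coproduct interchange for degree-wise spread shifted injectives (exploiting exactness of coproducts in $\Gcal$) to show that $C$ has only finitely many nonzero cohomologies; then a result of Saor\'in--\v{S}\v{t}ov\'i\v{c}ek--Virili on homotopically finitely presented objects gives that $C$ is of type $\FPinf$ in $\D^b(\Gcal)$, whence $C \in \D^b(\fp\Gcal)$ by \cref{lemma:fp-equal-hcompact}. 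None of this passes through $Q_r$.
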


\begin{proof}
	Let $C$ be a compact object of $\D(\Gcal)$. For each $n\in\Zbb$ there is
	a natural map $C \to E(H^n(C))[-n]$ in $\D(\Gcal)$ to a shift of the injective
	envelope of $H^n(C)$. This induces a morphism $C\to
	\prod_{n\in\Zbb}E(H^n(C))[-n]$. Products in $\D(\Gcal)$ are computed
	as component-wise products of dg-injective resolutions; so in this case,
	the component-wise product of the $E(H^n(C))[-n]$.
	In this particular case, it coincides with the component-wise
	coproduct. This is also the
	coproduct in $\D(\Gcal)$, since $\Gcal$ has exact coproducts. Therefore we obtain a morphism $C\to
	\coprod_{n\in\Zbb} E(H^n(C))[-n]$ in $\D(\Gcal)$.
	By compactness of $C$, this map factors through a finite subcoproduct. It
	follows that $C$ has finitely many non-zero cohomologies, i.e.
	$C\in\D^b(\Gcal)$.

	By \cite[Corollary~6.10]{SSV}, $C$ is homotopically finitely presented in
	$\D(\Gcal)$. In particular, $C$ is of type $\FPinf$ in $\D^b(\Gcal)$.
	Therefore, $C \in \D^b(\fp \Gcal)$ by \cref{lemma:fp-equal-hcompact}.
\end{proof}

\begin{rmk}\label{rmk:dsing}
	Let $\Gcal$ be a locally coherent Grothendieck category.
	\cref{prop:compacts-are-bounded} shows that $\D(\Gcal)^c$ is a thick
	subcategory of $\D^b(\fp \Gcal)$, and therefore we can form the Verdier
	quotient $\D^{\sing}(\Gcal) = \D^b(\fp \Gcal)/\D(\Gcal)^c$. Following the
	locally noetherian case \cite{K05}, we call $\D^{\sing}(\Gcal)$ the
	\newterm{(small) singularity category} of $\Gcal$.
\end{rmk}

\subsection{The recollement}

In order to prove that $\Gcal$ admits Krause's recollement we need to construct a
left adjoint to the localization $Q\colon\K(\inj\Gcal)\to \D(\Gcal)$.

\begin{lemma}\label{lemma:adjointcompact}
	Let $\Gcal$ be a locally coherent Grothendieck category. For any $C \in
	\D(\Gcal)^c$ and any $Y \in \K(\inj \Gcal)$, there is a natural
	isomorphism 
	\[\Hom_{\D(\Gcal)}(C,QY) \cong \Hom_{\K(\inj \Gcal)}(Q_r C,Y).\]
\end{lemma}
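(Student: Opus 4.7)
The plan is to turn the claimed isomorphism into a statement about the interaction between the two adjoint functors and the compact generators, and then run a standard dévissage in the variable $Y$. Throughout, let $C \in \D(\Gcal)^c$ and note that by \cref{prop:compacts-are-bounded} we have $C \in \D^b(\fp \Gcal)$; hence by \cref{corollary:rightadjoint} the object $Q_r C$ lies in $\K(\inj\Gcal)^c$, and the counit $\varepsilon_C : QQ_rC \to C$ is an isomorphism (since $Q$ and $Q_r$ restrict to mutually inverse equivalences between $\K(\inj\Gcal)^c$ and $\D^b(\fp\Gcal)$).

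For every $Y \in \K(\inj\Gcal)$ define a natural map
\[ \varphi_Y \colon \Hom_{\K(\inj\Gcal)}(Q_rC, Y) \longrightarrow \Hom_{\D(\Gcal)}(C, QY) \]
as the composition of the functor $Q$ with the isomorphism $\Hom_{\D(\Gcal)}(QQ_rC, QY) \cong \Hom_{\D(\Gcal)}(C, QY)$ induced by $\varepsilon_C$. We want to show $\varphi_Y$ is an isomorphism for every $Y \in \K(\inj\Gcal)$.

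First I would verify this for $Y \in \K(\inj\Gcal)^c$: in that case both $Q_rC$ and $Y$ live in $\K(\inj\Gcal)^c$, so $\varphi_Y$ is bijective because $Q$ is fully faithful on $\K(\inj\Gcal)^c$ (by the equivalence $\K(\inj\Gcal)^c \simeq \D^b(\fp\Gcal)$ of \cref{theorem:stovicek}). Next, let $\Ycal \subseteq \K(\inj\Gcal)$ denote the full subcategory of objects $Y$ for which $\varphi_Y$ is an isomorphism. A Five lemma argument shows that $\Ycal$ is triangulated; moreover $\Ycal$ is closed under coproducts, because $Q_rC$ is compact in $\K(\inj\Gcal)$, $C$ is compact in $\D(\Gcal)$, and $Q$ preserves coproducts, so that both the source and the target of $\varphi_Y$ are cohomological functors in $Y$ that commute with coproducts.

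Since $\Ycal$ contains $\K(\inj\Gcal)^c$ by the first case, and $\K(\inj\Gcal)$ is compactly generated by \cref{theorem:stovicek}, we conclude $\Ycal = \K(\inj\Gcal)$, proving the lemma. No real obstacle is anticipated; the content of the statement has already been absorbed into \cref{prop:compacts-are-bounded} (putting $C$ into $\D^b(\fp\Gcal)$) and into \cref{theorem:stovicek}/\cref{corollary:rightadjoint} (identifying $Q_rC$ as a compact object for which $QQ_rC \cong C$), after which the result follows by the standard single dévissage in the target variable.
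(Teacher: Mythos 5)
Your proof is correct and follows essentially the same route as the paper: fix the natural map induced by $Q$ together with the counit isomorphism $QQ_rC \cong C$, verify it is an isomorphism on compacts via the equivalence $\K(\inj\Gcal)^c \simeq \D^b(\fp\Gcal)$, and then dévissage in $Y$ using that $\K(\inj\Gcal)$ is compactly generated and that both sides commute with coproducts. The only cosmetic difference is that the paper's dévissage category quantifies over all compact $C$ at once, whereas you fix $C$ first; this makes no difference to the argument.
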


\begin{proof}
	Consider the natural transformation 
	\[\eta_{C,Y}:  \Hom_{\K(\inj \Gcal)}(Q_r C,Y) \rightarrow
		\Hom_{\D(\Gcal)}(QQ_r C,QY)\]
	induced by $Q$. By \cref{corollary:rightadjoint}, the functors $Q_r$ and
	$Q$ induce an equivalence $\D^b(\fp \Gcal) \cong \K(\inj \Gcal)^c$. We
	see that $QQ_r C$ is naturally isomorphic to $C$ and also, in view of
	\cref{prop:compacts-are-bounded},  that $\eta_{C,Y}$ is an isomorphism
	whenever $Y \in \K(\inj \Gcal)^c$. Consider the subcategory $\Kcal$ of
	$\K(\inj \Gcal)$ consisting of all objects $Y$ such that $\eta_{C,Y}$ is
	an isomorphism for all $C \in \D(\Gcal)^c$. A standard argument shows
	that $\Kcal$ is a triangulated subcategory. Since $C$ is compact in
	$\D(\Gcal)$ and $Q_r C$ is compact in $\K(\inj \Gcal)$, the subcategory
	$\Kcal$ is closed under coproducts. Then $\Kcal$ is a localizing
	subcategory of $\K(\inj \Gcal)$ containing all compact objects, and
	therefore $\Kcal = \K(\inj \Gcal)$ by \cref{theorem:stovicek}.
\end{proof}

\begin{lemma}\label{lemma:leftadjoint}
	Let $\Gcal$ be a locally coherent Grothendieck category such that
	$\D(\Gcal)$ is compactly generated. Then the functor $Q: \K(\inj \Gcal)
	\rightarrow \D(\Gcal)$ admits a left adjoint $Q_l$.
\end{lemma}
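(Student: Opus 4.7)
The plan is to apply Brown representability in its covariant form to the compactly generated triangulated category $\K(\inj\Gcal)$. For each $X \in \D(\Gcal)$, consider the cohomological functor
\[ F_X \colon \K(\inj\Gcal) \to \mathsf{Ab}, \quad F_X(Y) = \Hom_{\D(\Gcal)}(X,QY). \]
Once we verify that $F_X$ preserves products, the covariant Brown representability theorem---available because $\K(\inj\Gcal)$ is compactly generated (and hence perfectly generated) by \cref{theorem:stovicek}---furnishes a representing object $Q_lX \in \K(\inj\Gcal)$. Standard naturality arguments then promote the assignment $X \mapsto Q_lX$ to a functor left adjoint to $Q$.

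The key step is product-preservation of $F_X$, which I would establish by dévissage on $X$ in the spirit of \cref{lemma:double-devissage}. Let $\Xcal \subseteq \D(\Gcal)$ be the full subcategory of those $X$ for which $F_X$ preserves products. Since products are exact in $\mathsf{Ab}$, the five-lemma applied to the long exact sequences of $F_{-}$ shows that $\Xcal$ is a triangulated subcategory. For any family $(X_\alpha)$ in $\Xcal$ and $(Y_i)$ in $\K(\inj\Gcal)$, interchanging the two product operators gives
\[F_{\coprod_\alpha X_\alpha}(\prod_i Y_i) = \prod_\alpha \prod_i F_{X_\alpha}(Y_i) = \prod_i \prod_\alpha F_{X_\alpha}(Y_i) = \prod_i F_{\coprod_\alpha X_\alpha}(Y_i),\]
so $\Xcal$ is closed under coproducts and hence is a localizing subcategory of $\D(\Gcal)$.

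To finish, it suffices to show that $\Xcal$ contains a set of compact generators of $\D(\Gcal)$. For any $C \in \D(\Gcal)^c$, the natural isomorphism $F_C(Y) \cong \Hom_{\K(\inj\Gcal)}(Q_rC,Y)$ provided by \cref{lemma:adjointcompact} exhibits $F_C$ as a representable covariant functor; such functors automatically preserve products, so $\D(\Gcal)^c \subseteq \Xcal$. Since $\D(\Gcal)$ is compactly generated by hypothesis, $\Xcal = \D(\Gcal)$. The main potential obstacle is invoking the correct (covariant) form of Brown representability for the compactly generated category $\K(\inj\Gcal)$; once this is in hand, the remaining dévissage is essentially the same shape as the argument in \cref{lemma:double-devissage}.
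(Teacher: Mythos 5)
Your argument is correct, but it takes a genuinely different route from the paper's. You produce $Q_l$ abstractly, by applying Neeman's Brown representability for the dual to the compactly generated category $\K(\inj \Gcal)$ after checking by d\'{e}vissage that each covariant homological functor $F_X=\Hom_{\D(\Gcal)}(X,Q-)$ preserves products; the only substantive input is \cref{lemma:adjointcompact}, which makes $F_C$ representable (hence product-preserving) for compact $C$ and anchors the d\'{e}vissage, after which compact generation of $\D(\Gcal)$ closes the localizing subcategory $\Xcal$ up to all of $\D(\Gcal)$. The paper instead constructs $Q_l$ explicitly: it forms the localizing subcategory $\Lcal\subseteq\K(\inj \Gcal)$ generated by $Q_r(\D(\Gcal)^c)$, shows via \cref{lemma:double-devissage} that $Q$ restricts to an equivalence $\Lcal\toeq\D(\Gcal)$, and defines $Q_l$ as the inverse of that equivalence followed by the inclusion $\Lcal\hookrightarrow\K(\inj \Gcal)$; there \cref{lemma:adjointcompact} enters at the end, to identify the right adjoint of $Q_l$ with $Q$ by showing $\Lcal\Perp{0}\subseteq\Kac(\inj \Gcal)$. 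Your route is shorter and sidesteps the auxiliary category $\Lcal$, at the cost of invoking a deeper external theorem --- and note that the covariant form of Brown representability is justified here by \emph{compact} generation of $\K(\inj \Gcal)$ (Neeman), not by the perfect generation you mention parenthetically, which is only known to yield the contravariant form. The paper's construction is more elementary and makes additional facts visible for free, notably that $Q_l$ is fully faithful and that $Q_lC\cong Q_rC$ for compact $C$ (\cref{lem-Ql-Qr}), which are used later in the equivalence of recollements.
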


\begin{proof}
	Let $\Lcal$ be the localizing subcategory of $\K(\inj \Gcal)$ generated by
	$Q_r(\D(\Gcal)^c)$. Then $\Lcal$ is a compactly generated triangulated
	category, and the restriction $Q_{\restriction \Lcal}: \Lcal \rightarrow
	\D(\Gcal)$ is a functor between compactly generated triangulated categories
	that preserves coproducts and by \cref{corollary:rightadjoint} restricts
	further to an equivalence $\Lcal^c \cong \D(\Gcal)^c$. Then $Q_{\restriction
	\Lcal}$ is an equivalence by \cref{lemma:double-devissage}, and so there is
	an inverse equivalence $P: \D(\Gcal) \toeq \Lcal$. We define $Q_l$ as the
	composition of $P$ and the inclusion $\iota: \Lcal \hookrightarrow{} \K(\inj
	\Gcal)$. 

	The inclusion $\iota$ of $\Lcal$ into $\K(\inj \Gcal)$ has a right adjoint
	$\tau: \K(\inj \Gcal) \rightarrow \Lcal$, see e.g. \cite[Theorem
	4.1]{Nee96}. It follows that $Q_l = \iota \circ P $ has a right adjoint $Q
	\circ \tau$. It remains to show that $Q \circ \tau$ is naturally equivalent
	to $Q$. Applying $Q$ to the counit transformation $\iota \circ \tau
	\rightarrow \mbox{id}_{\K(\inj \Gcal)}$ we see that it is enough to show
	that any object of $\Lcal\Perp{0}$ is sent to zero by $Q$, i.e.
	$\Lcal\Perp{0} \subseteq \Kac(\inj \Gcal)$. If $Y \in \Lcal\Perp{0}$ then
	$\Hom_{\K(\inj \Gcal)}(Q_r C,Y) = 0$ for all $C \in \D(\Gcal)^c$. By
	\cref{lemma:adjointcompact}, this implies $\Hom_{\D(\Gcal)}(C,QY) = 0$ for
	all $C \in \D(\Gcal)^c$, and since $\D(\Gcal)$ is compactly generated, we
	have $QY = 0$, as desired.
\end{proof}

We record the following auxiliary property of the adjoints of $Q$ for later use. 

\begin{lemma}\label{lem-Ql-Qr}
	In the setting of \cref{lemma:leftadjoint} we have an isomorphism $Q_r C
	\cong Q_l C$ for all $C \in \D(\Gcal)^c$.
\end{lemma}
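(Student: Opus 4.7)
The plan is to identify both $Q_r C$ and $Q_l C$ as the (essentially unique) preimage of $C$ under the equivalence $Q_{\restriction \Lcal}\colon \Lcal \toeq \D(\Gcal)$ that was used to construct $Q_l$ in \cref{lemma:leftadjoint}. The first observation is that $Q_r C$ already lies in $\Lcal$: by construction, $\Lcal$ is the localizing subcategory of $\K(\inj \Gcal)$ generated by $Q_r(\D(\Gcal)^c)$, and $C \in \D(\Gcal)^c$, so in fact $Q_r C \in Q_r(\D(\Gcal)^c) \subseteq \Lcal$. The second observation is that $Q$ is a Verdier localization with kernel $\Kac(\inj \Gcal)$, so its right adjoint $Q_r$ is fully faithful; equivalently, the counit $QQ_r \to \mathrm{id}_{\D(\Gcal)}$ is a natural isomorphism, as was also noted in the proof of \cref{corollary:rightadjoint}. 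In particular $QQ_r C \cong C$.

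On the other hand, writing $P$ for the inverse of $Q_{\restriction \Lcal}$, the definition $Q_l = \iota \circ P$ gives $QQ_l C = Q_{\restriction \Lcal}(PC) \cong C$ as well. Since $Q_r C$ and $Q_l C$ both lie in $\Lcal$ and both map to objects isomorphic to $C$ under the equivalence $Q_{\restriction \Lcal}$, applying $P$ to the composed isomorphism $QQ_r C \cong C \cong QQ_l C$ yields $Q_r C \cong P(C) \cong Q_l C$ inside $\Lcal$. The fully faithful inclusion $\iota\colon \Lcal \hookrightarrow \K(\inj \Gcal)$ then transports this isomorphism to $\K(\inj \Gcal)$, as desired. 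No serious obstacle is anticipated: the argument is essentially bookkeeping resting on the single structural fact that the localizing subcategory $\Lcal$ governing the construction of $Q_l$ already contains the image $Q_r(\D(\Gcal)^c)$, together with the general fact that the right adjoint to a Verdier localization is fully faithful.
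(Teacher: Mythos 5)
Your argument is correct, but it takes a different route from the paper's. The paper proves the lemma abstractly: by the adjunction $Q_l\dashv Q$ one has $\Hom_{\K(\inj\Gcal)}(Q_lC,Y)\cong\Hom_{\D(\Gcal)}(C,QY)$, and \cref{lemma:adjointcompact} identifies the latter with $\Hom_{\K(\inj\Gcal)}(Q_rC,Y)$ naturally in $Y$; Yoneda then gives $Q_lC\cong Q_rC$. You instead exploit the explicit construction of $Q_l$ in \cref{lemma:leftadjoint}: both $Q_rC$ (being in the generating set of $\Lcal$) and $Q_lC=\iota PC$ lie in $\Lcal$, both are sent to objects isomorphic to $C$ by the equivalence $Q_{\restriction\Lcal}$ (for $Q_rC$ because the counit $QQ_r\to\mathrm{id}$ is invertible, $Q$ being a Verdier localization; for $Q_lC$ by definition of $P$), and an equivalence reflects isomorphisms. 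Each step is justified, so the proof is complete. The trade-off is that your argument is more elementary --- it bypasses \cref{lemma:adjointcompact} and its d\'evissage entirely --- but it is tied to the particular construction of $Q_l$ via $\Lcal$, whereas the paper's Yoneda argument is construction-independent and would apply to any left adjoint of $Q$, and it moreover yields the isomorphism naturally in $C$.
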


\begin{proof}
	By \cref{lemma:leftadjoint} and \cref{lemma:adjointcompact}, there are
	natural isomorphisms for all $Y \in \K(\inj \Gcal)$
	\[\Hom_{\K(\inj \Gcal)}(Q_l C, Y) \cong \Hom_{\D(\Gcal)}(C, QY) \cong
		\Hom_{\K(\inj \Gcal)}(Q_r C, Y).\]
	The isomorphism $Q_r C \cong Q_l C$ thus follows from the Yoneda lemma.
\end{proof}

\begin{thm}\label{theorem:recollement}
	Let $\Gcal$ be a locally coherent Grothendieck category such that
	$\D(\Gcal)$ is compactly generated. Then there is a recollement:
	\[\begin{tikzcd}[column sep=huge]
		\Kac(\inj \Gcal) \arrow{r}{i_*} &
		\K(\inj \Gcal) \arrow[shift left,  bend left] {l}       {i^!}
			       \arrow[shift right, bend right]{l}[above]{i^*}
			       \arrow{r}{Q} &
		\D(\Gcal)\quad \arrow[shift left,  bend left] {l}       {Q_r}
		          \arrow[shift right, bend right]{l}[above]{Q_l}
	\end{tikzcd}\]
\end{thm}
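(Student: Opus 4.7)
The plan is to combine the adjunctions $Q_l \dashv Q \dashv Q_r$ from \cref{corollary:rightadjoint} and \cref{lemma:leftadjoint} into the desired recollement. There are two remaining ingredients to check: (a) fully faithfulness of both $Q_l$ and $Q_r$, and (b) construction of an adjoint triple $(i^*,i_*,i^!)$ where $i_* \colon \Kac(\inj \Gcal) \hookrightarrow \K(\inj \Gcal)$ is the inclusion.

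For (a), I would first observe that $Q_r$ is fully faithful: as noted in the proof of \cref{corollary:rightadjoint}, for any $X \in \D(\Gcal)$ the object $Q_r X$ is homotopy equivalent to a dg-injective resolution of $X$, so the counit $Q Q_r X \to X$ of $Q \dashv Q_r$ is a natural isomorphism. Similarly, $Q_l$ is fully faithful: by the construction in \cref{lemma:leftadjoint}, $Q_l = \iota \circ P$ where $P \colon \D(\Gcal) \toeq \Lcal$ is the inverse equivalence of the restriction $Q_{\restriction \Lcal}$, hence $Q Q_l = Q_{\restriction \Lcal} \circ P \cong \mathrm{id}_{\D(\Gcal)}$, i.e. the unit of $Q_l \dashv Q$ is a natural isomorphism.

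For (b), by definition $\Kac(\inj \Gcal) = \ker Q$, so it suffices to build $i^*$ and $i^!$. These arise from the canonical gluing triangles
\[ Q_l Q Y \xrightarrow{\epsilon^l_Y} Y \to i^* Y \xrightarrow{+} \qquad \text{and} \qquad i^! Y \to Y \xrightarrow{\eta^r_Y} Q_r Q Y \xrightarrow{+}, \]
induced respectively by the counit of $Q_l \dashv Q$ and the unit of $Q \dashv Q_r$. Applying $Q$ and using the isomorphisms from (a), the third term of each triangle is annihilated by $Q$, hence lies in $\Kac(\inj \Gcal)$. The adjunctions $i^* \dashv i_* \dashv i^!$ then follow from the standard orthogonality argument: for any $Z \in \Kac(\inj \Gcal)$ both $\Hom_{\K(\inj \Gcal)}(Q_l Q Y, Z) \cong \Hom_{\D(\Gcal)}(QY, QZ)$ and $\Hom_{\K(\inj \Gcal)}(Z, Q_r Q Y) \cong \Hom_{\D(\Gcal)}(QZ, QY)$ vanish (because $QZ = 0$), so applying $\Hom(-,Z)$ and $\Hom(Z,-)$ to the respective triangles produces the required adjunction isomorphisms.

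With (a) and (b) in hand, all recollement axioms are verified: $(i^*,i_*,i^!)$ and $(Q_l,Q,Q_r)$ are adjoint triples; $i_*$, $Q_l$, $Q_r$ are fully faithful; and $\mathrm{Im}(i_*) = \ker Q$ by construction. I anticipate no substantial obstacle, since the genuinely non-trivial work---the existence of both adjoints to $Q$ in the locally coherent setting---has already been done in the preceding results of this section; what remains is only the assembly of these pieces into the recollement data.
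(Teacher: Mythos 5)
Your proposal is correct and takes essentially the same approach as the paper: the paper proves the theorem by citing the standard fact (Krause's Lemma 3.2 in \cite{K05}) that a Verdier localization functor admitting both adjoints assembles into a recollement, using \cref{corollary:rightadjoint} and \cref{lemma:leftadjoint} to supply those adjoints. You simply spell out the details of that standard lemma — full faithfulness of $Q_l, Q_r$ from the explicit constructions, and the gluing triangles defining $i^*, i^!$ together with the orthogonality argument for the adjunctions — rather than citing it, and all the steps are sound.
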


\begin{proof}
	Recall that the functor $Q$ is a Verdier localization functor whose
	kernel is the full subcategory $\Kac(\inj \Gcal)$. By a standard
	argument (see \cite[Lemma 3.2]{K05}), it is enough to establish that $Q$
	admits both left and right adjoint functors, which we showed in
	\cref{corollary:rightadjoint} and \cref{lemma:leftadjoint}.
\end{proof}

\begin{cor}\label{cor:Kac}
	In the setting of \cref{theorem:recollement}, the category $\Kac(\inj
	\Gcal)$ is compactly generated and the subcategory of compact objects
	$\Kac(\inj \Gcal)^c$ is equivalent up to retracts to the singularity
	category $\D^{\sing}(\Gcal)$ of $\Gcal$.
\end{cor}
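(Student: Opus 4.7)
The plan is to apply Neeman's localization theorem \cite{Nee96} to the compactly generated category $\Tcal := \K(\inj \Gcal)$ and a suitable localizing subcategory. The first step is to observe that the recollement of \cref{theorem:recollement} induces (as described in \cref{sec:recollements}) a TTF triple in $\Tcal$, and in particular a stable $t$-structure $(\mathrm{Im}(Q_l), \Kac(\inj \Gcal))$. Consequently, the truncation functor $i^*$ yields a triangle equivalence $\Tcal/\mathrm{Im}(Q_l) \toeq \Kac(\inj \Gcal)$, reducing the problem to analysing this Verdier quotient with $\Lcal := \mathrm{Im}(Q_l)$.

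Next I would check that $\Lcal$ is the localizing subcategory of $\Tcal$ generated by the set $Q_l(\D(\Gcal)^c)$ of compact objects of $\Tcal$. Since $\Gcal$ has exact coproducts, $Q\colon \Tcal \to \D(\Gcal)$ preserves coproducts; therefore its left adjoint $Q_l$ preserves compactness, showing $Q_l(\D(\Gcal)^c) \subseteq \Tcal^c$. Moreover, $Q_l$ is fully faithful (by the recollement axiom) and preserves coproducts (as a left adjoint), so its essential image $\Lcal$ is a localizing subcategory of $\Tcal$ triangle-equivalent to $\D(\Gcal)$. Since $\D(\Gcal)$ is compactly generated by assumption, $\Lcal$ coincides with the smallest localizing subcategory of $\Tcal$ containing $Q_l(\D(\Gcal)^c)$.

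Neeman's localization theorem then furnishes compact generation of $\Tcal/\Lcal$, and hence of $\Kac(\inj \Gcal)$, together with an equivalence up to retracts $\Tcal^c/Q_l(\D(\Gcal)^c) \toeq (\Tcal/\Lcal)^c \cong \Kac(\inj \Gcal)^c$. To identify the left-hand side with the singularity category, I would use \cref{corollary:rightadjoint}, which realizes the equivalence $\Tcal^c \toeq \D^b(\fp \Gcal)$ as $Q$ (with quasi-inverse $Q_r$), together with \cref{lem-Ql-Qr}, giving $Q_l \cong Q_r$ on $\D(\Gcal)^c$. Under the equivalence $Q\colon \Tcal^c \toeq \D^b(\fp \Gcal)$, the subcategory $Q_l(\D(\Gcal)^c) = Q_r(\D(\Gcal)^c)$ corresponds to $QQ_r(\D(\Gcal)^c) = \D(\Gcal)^c \subseteq \D^b(\fp \Gcal)$, so $\Kac(\inj \Gcal)^c$ is equivalent, up to retracts, to $\D^b(\fp \Gcal)/\D(\Gcal)^c = \D^{\sing}(\Gcal)$, as desired. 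The only step that requires genuine care is the identification of $\Lcal$ with the localizing subcategory generated by $Q_l(\D(\Gcal)^c)$ inside $\Tcal$, but this follows from the transfer of compact generation from $\D(\Gcal)$ through the fully faithful, coproduct-preserving embedding $Q_l$.
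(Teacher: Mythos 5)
Your argument is correct and is essentially the paper's own: the published proof consists of a single citation of Neeman's localization theorem applied to the recollement of \cref{theorem:recollement}, and your write-up supplies exactly the standard details of that application, identifying $\mathrm{Im}(Q_l)$ as the localizing subcategory generated by the compacts $Q_l(\D(\Gcal)^c)$ (a fact the paper already establishes inside the proof of \cref{lemma:leftadjoint}) and then transporting the quotient of compacts to $\D^b(\fp\Gcal)/\D(\Gcal)^c$ via \cref{corollary:rightadjoint} and \cref{lem-Ql-Qr}. No gaps.
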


\begin{proof}
	This follows directly from \cite[Theorem 2.1]{Nee92} applied to the
	situation of \cref{theorem:recollement}.
\end{proof}

\begin{cor}[{cf. \cite[Corollary 4.4]{K05}}]
	Let $\Gcal$ be a locally coherent Grothendieck category such that
	$\D(\Gcal)$ is compactly generated. Then any product of acyclic complexes of
	injective objects is acyclic.
\end{cor}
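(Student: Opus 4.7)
The approach is to deduce this essentially for free from \cref{theorem:recollement}. My plan has three steps.

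First, I would pin down how products are computed in $\K(\inj\Gcal)$. Since the subcategory $\inj\Gcal \subseteq \Gcal$ is closed under products (any product of injective objects in a Grothendieck category is injective), the product of a family of complexes with injective components is just the componentwise product, which again has injective components. Hence products in $\K(\inj\Gcal)$ exist and are computed componentwise, and for any family $(X_i)_{i\in I}$ of acyclic complexes of injectives the product $\prod_i X_i$ is well-defined in $\K(\inj\Gcal)$; only its acyclicity remains to be established.

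The second step, which is the heart of the matter, is to exploit that $Q\colon \K(\inj\Gcal)\to\D(\Gcal)$ admits \emph{both} a right adjoint $Q_r$ (by \cref{corollary:rightadjoint}) and a left adjoint $Q_l$ (by \cref{lemma:leftadjoint}). Being a right adjoint to $Q_l$, the functor $Q$ preserves all limits, and in particular products. Thus
\[ Q\Bigl(\prod_{i\in I} X_i\Bigr) \;\cong\; \prod_{i\in I} Q(X_i) \;=\; 0, \]
since each $Q(X_i)=0$ by acyclicity. This places $\prod_i X_i$ in $\ker Q = \Kac(\inj\Gcal)$, giving the desired acyclicity.

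There is no genuine obstacle at this stage: the content of the corollary is absorbed entirely into \cref{theorem:recollement}, and in particular into the non-trivial construction of the left adjoint $Q_l$ in \cref{lemma:leftadjoint}. Without that left adjoint, $Q$ would only be guaranteed to preserve coproducts, and the argument for products would not go through --- this highlights, in retrospect, why establishing the full recollement (and not merely the Verdier quotient $Q$ with its right adjoint) was worth the effort.
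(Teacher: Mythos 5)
Your argument is correct and is exactly the intended one: the paper omits the proof and simply cites Krause's Corollary~4.4, whose argument is precisely that $Q$ preserves products (having the left adjoint $Q_l$ from \cref{lemma:leftadjoint}), that products of complexes of injectives are computed componentwise, and that $\ker Q = \Kac(\inj\Gcal)$ is therefore closed under products. Nothing further is needed.
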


\begin{rmk}\label{rem:stablederived}
	In the locally noetherian situation \cite{K05}, the category $\Kac(\inj
	\Gcal)$ is called the \newterm{stable derived category} of $\Gcal$ and
	denoted by $\DS(\Gcal)$, while other sources \cite{Bec14}, \cite{St} call
	it the \newterm{(large) singularity category} of $\Gcal$. In the latter two
	citations, it is shown that $\DS(\Gcal)$ is a homotopy category of
	$\C(\Gcal)$ endowed with a suitable abelian model structure. It is also
	explained in \cite[\S7]{St} that $\DS(\Gcal)$ naturally identifies with the
	subcategory of all acyclic complexes of the coderived category
	$\D^\co(\Gcal)$ via the equivalence $\K(\inj{\Gcal}) \cong \D^{\co}(\Gcal)$,
	and the same equivalence identifies the recollement of
	\cref{theorem:recollement} with the recollement of the form
	\[\begin{tikzcd}[column sep=huge]
		\DS(\Gcal) \arrow{r}{\subseteq} &
		\D^\co(\Gcal) \arrow[shift left,  bend left] {l}       {}
			       \arrow[shift right, bend right]{l}[above]{}
			       \arrow{r}{Q} &
		\D(\Gcal)\quad \arrow[shift left,  bend left] {l}       {}
		          \arrow[shift right, bend right]{l}[above]{}
	\end{tikzcd}\]
\end{rmk}

\subsection{The singularity category of a locally coherent Grothendieck category}

The next goal is to interpret the vanishing of the singularity category
$\D^\sing(\Gcal)$ of \cref{rmk:dsing} in terms of homological dimension of objects of $\Gcal$. For
this, we need to impose a relatively mild condition on $\Gcal$. Following Roos
\cite{Roo}, a Grothendieck category $\Gcal$ is $\Abfs d$ for a non-negative
integer $d$ if for any set $I$, the product functor $\prod_I: \Gcal^I
\rightarrow \Gcal$ has cohomological dimension at most $d$, we refer the reader
to \cite{HX} for further details. In particular, $\Gcal$ satisfies $\Abfs 0$ if
and only if the products are exact in $\Gcal$. Recall that the product $\prod_{i
\in I}M_i$ in $\D(\Gcal)$ is computed as the product $\prod_{i \in I}E_i$ in
$\C(\Gcal)$ for any choice of injective resolutions $E_i$ of $M_i$. Therefore,
$\Gcal$ satisfies $\Abfs d$ if and only if the (component-wise) product
$\prod_{i \in I}E_i$ belongs to $\D^{\leq d}$ whenever $E_i$ are complexes of
injective objects of $\Gcal$ concentrated in non-negative degrees with the only
non-vanishing cohomology in degree zero.

\begin{lemma}\label{lem:cosmashing}
	Let $\Gcal$ be Grothendieck category which is $\Abfs d$ for some $d \geq 0$.
	Then $\prod_{i \in I}X_i \in \D^{\leq d}$ for any collection of objects $X_i
	\in \D^{\leq 0}$ (in other words, the standard $t$-structure $(\D^{\leq
	0},\D^{>0})$ in $\D(\Gcal)$ is \newterm{$d$-cosmashing}, see
	\cite[Definition~5.4]{Vi}).
\end{lemma}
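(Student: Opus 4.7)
The plan is to identify $\prod_i X_i$ in $\D(\Gcal)$ with the right derived functor $R\prod$ of the left exact functor $\prod\colon \Gcal^I\to\Gcal$ applied to the coherent diagram $(X_i)_{i\in I}$, and then bound its cohomology via the hyperderived functor spectral sequence. Since $\Gcal^I$ is itself Grothendieck with coordinate-wise cohomology, the assumption $X_i\in\D^{\leq 0}$ translates to $(X_i)_{i\in I}\in\D^{\leq 0}(\Gcal^I)$, and a K-injective resolution $(E_i)_{i\in I}$ in $\C(\Gcal^I)$ yields a K-injective complex $\prod_i E_i$ in $\C(\Gcal)$ (component-wise products preserve K-injectivity), which represents both $R\prod((X_i))$ and the product $\prod_i X_i$ in $\D(\Gcal)$.

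The key step is the hyperderived functor spectral sequence
\[ E_2^{p,q} = R^p{\textstyle\prod_i}\,H^q(X_i) \Longrightarrow H^{p+q}\bigl({\textstyle\prod_i} X_i\bigr), \]
arising from a Cartan--Eilenberg resolution of a representative of $(X_i)$ concentrated in non-positive degrees. The $\Abfs d$ hypothesis gives $R^p\prod = 0$ for $p > d$, while by hypothesis $H^q(X_i) = 0$ for $q > 0$. The $E_2$-page is therefore confined to the rectangle $0 \leq p \leq d$, $q \leq 0$, so $E_\infty^{p,q} = 0$ whenever $p + q > d$, yielding $H^n(\prod_i X_i) = 0$ for $n > d$ and hence $\prod_i X_i \in \D^{\leq d}$.

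The main obstacle is justifying convergence of this spectral sequence in the unbounded-below setting. It should follow from the second-quadrant support of the Cartan--Eilenberg double complex (entries vanishing for $p > 0$ or $q < 0$): the column filtration of the product-total complex is regular in each total degree and hence gives strong convergence. As an alternative avoiding spectral sequences, one may argue by dévissage, first settling the uniformly bounded case $Y_i \in \D^{[-N,0]}$ by induction on $N$ via the triangle $H^{-N-1}(Y_i)[N+1] \to Y_i \to \tau^{\geq -N}Y_i$ combined with $\Abfs d$ applied to the single cohomology slice, and then passing to unbounded $X_i \in \D^{\leq 0}$; but the limit step here is delicate and runs into circularity issues without careful bookkeeping (the naive Milnor/homotopy limit approach degrades the bound $d$), which is why encoding the argument via the spectral sequence is more natural.
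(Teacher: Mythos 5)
Your spectral-sequence argument has a genuine gap precisely where you flag it. Even granting that a careful Boardman analysis makes the filtration spectral sequence of the double complex $\prod_I I^{\bullet,\bullet}$ converge (and note you misstate the support: you need $R^p\prod_I$ in $p\geq 0$ and $H^q$ in $q\leq 0$, so it is the double complex with entries vanishing for $p<0$ or $q>0$), convergence alone only identifies the abutment with $H^{p+q}$ of the product-total complex $\tot^\Pi(\prod_I I^{\bullet,\bullet})$. To conclude that this equals $H^{p+q}\bigl(\prod_i X_i\bigr)$ in $\D(\Gcal)$ you must know that $\tot^\Pi(I^{\bullet,\bullet})$ is a K-injective replacement of $(X_i)_i$ in $\C(\Gcal^I)$, or otherwise that applying $\prod_I$ to it computes the derived product. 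For an unbounded-below complex this is exactly the kind of statement that is \emph{not} automatic: a complex of injectives need not be K-injective, and the fact that the CE product-totalization of a bounded-above complex \emph{is} K-injective requires its own argument (e.g. realizing it as a Milnor-type inverse limit of bounded-below complexes of injectives along degreewise split surjections and checking K-injectivity is preserved). None of that is in your write-up, and it is the heart of the matter, not the support of the $E_2$-page.

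Your ``alternative avoiding spectral sequences'' is in fact essentially the route the paper takes, but you dismiss it because of an obstruction that the paper's version does not have. The problem you identify — the homotopy-limit/Milnor step degrading the bound by one — is an artefact of using soft truncations $\tau^{\geq -N}$ of the objects in $\D(\Gcal)$ and then passing to a homotopy limit. The paper instead works on the chain level: choose dg-injective replacements $E_i$, so that the component-wise product $\prod_i E_i$ already computes the derived product, and then use \emph{brutal} truncations $\sigma^{>n}E_i$ of these complexes. Brutal truncations commute with component-wise products on the nose, and the bounded case goes through by induction on the lower bound (the base case being exactly the definition of $\Abfs d$, with the stalk of a product of injectives contributing nothing above degree $0$). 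Finally, $\prod_i E_i$ is the honest filtered \emph{colimit} in $\C(\Gcal)$ of the subcomplexes $\prod_i\sigma^{>n}E_i$ as $n\to -\infty$, and filtered colimits are exact in a Grothendieck category, so cohomology is preserved with no loss of a degree. So the ``circularity'' you worry about disappears once you truncate the injective complexes brutally rather than truncating the derived-category objects softly and then taking homotopy limits.
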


\begin{proof} Let $E_i$ be a dg-injective replacement of $X_i$ for any $i \in I$
	\cite{Ser}.  Now we need to show that the (component-wise) product $\prod_{i
	\in I}E_i$ is in $\D^{\leq d}$. First, let us assume that there is $k \leq
	0$ such that all components of $E_i$ in degrees below $k$ are zero. If $k =
	0$ then $E_i$ are injective resolutions of objects of $\Gcal$ and so
	$\prod_{i \in I}E_i \in \D^{\leq d}$ by the definition of the $\Abfs d$
	property. The case of $k<0$ is proved using induction and brutal
	truncations, using the fact that brutal truncations commute with
	component-wise products. Finally, if there is no such $k$ we argue using the
	following isomorphism: $\prod_{i \in I}E_i \cong \varinjlim_{n<0}
	\sigma^{>n} \prod_{i \in I}E_i \cong \varinjlim_{n<0} \prod_{i \in
	I}\sigma^{>n}  E_i$, where $\sigma^{>n}$ is the brutal truncation to degrees
	above $n$. Then $\prod_{i \in I}\sigma^{>n}  E_i \in \D^{\leq d}$ by the
	previous case, and the directed colimit also stays in $\D^{\leq d}$ by
exactness.  \end{proof} \begin{lemma}\emph{(cf. \cite[\S1.6]{KrDer},
	\cite[Theorem 1.3]{HX})}\label{lem:product-coproduct} Let $\Gcal$ be a
	Grothendieck category satisfying $\Abfs d$ for some $d \geq 0$. Then for any
	collection $M_n \in \Gcal$ of objects indexed by $n \in \mathbb{Z}$ we have
	an isomorphism $\coprod_{n \in \mathbb{Z}}M_n[n] \cong \prod_{n \in
	\mathbb{Z}}M_n[n]$ in $\D(\Gcal)$ (the product is computed in $\D(\Gcal)$).
\end{lemma}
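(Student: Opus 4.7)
The approach is to show that the canonical morphism $\phi \colon \coprod_{n \in \Zbb} M_n[n] \to \prod_{n \in \Zbb} M_n[n]$ in $\D(\Gcal)$ is a quasi-isomorphism. Fix $k \in \Zbb$ and set $N = |k| + d$. The partition of the index set $\Zbb = \{n : |n| \leq N\} \sqcup \{n > N\} \sqcup \{n < -N\}$ induces compatible splittings of the coproduct and the product, hence a decomposition $\phi = \phi_0 \oplus \phi_+ \oplus \phi_-$; it suffices to verify that $H^k(\phi_?)$ is an isomorphism for each of the three summands.

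The factor $\phi_0$ involves only finitely many indices, so the coproduct coincides with the product and $\phi_0$ is an isomorphism; moreover $-k$ lies in $\{n : |n| \leq N\}$, so $H^k$ of both sides equals $M_{-k}$. For the remaining two factors, I claim that $H^k$ vanishes on both the coproduct and the product sides. On the coproduct side this is immediate from exactness of coproducts in $\Gcal$: $H^k$ commutes with $\coprod$, and none of the indices $n > N$ or $n < -N$ equals $-k$, so all summands $H^{k+n}(M_n)$ vanish.

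For the product over $\{n > N\}$, each $M_n$ is concentrated in degree $0$, so $M_n[n] \in \D^{\leq -n} \subseteq \D^{\leq -N-1}$; applying \cref{lem:cosmashing} to the shifted objects $M_n[n-N-1] \in \D^{\leq 0}$ gives $\prod_{n > N} M_n[n-N-1] \in \D^{\leq d}$, and shifting back yields $\prod_{n > N} M_n[n] \in \D^{\leq d-N-1}$. The identity $d - N - 1 = -|k| - 1 < k$ then forces $H^k$ to vanish. For the product over $\{n < -N\}$, choose injective resolutions $E_n$ of each $M_n$ supported in non-negative degrees; then $E_n[n]$ is supported in degrees $\geq -n > N$, so the component-wise product $\prod_{n < -N} E_n[n]$ in $\C(\Gcal)$, which represents $\prod_{n < -N} M_n[n]$ in $\D(\Gcal)$, is supported in degrees $\geq N + 1 > k$, whence $H^k = 0$.

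Piecing these vanishings together, $H^k(\phi)$ is an isomorphism for every $k \in \Zbb$, so $\phi$ is an isomorphism in $\D(\Gcal)$. The main point is the asymmetry between the two infinite parts: the product over $\{n < -N\}$ is handled elementarily because non-negatively supported injective resolutions make the lower bound on the support of the product complex explicit, while the product over $\{n > N\}$ genuinely requires the $\Abfs d$ hypothesis through \cref{lem:cosmashing}, reflecting that products in a general Grothendieck category need not be exact.
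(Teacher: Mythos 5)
Your proof is correct and follows essentially the same route as the paper's: compute $H^k$ of the canonical map, split the product into a finite middle block, an upper tail controlled by \cref{lem:cosmashing} (the $\Abfs d$ hypothesis), and a lower tail controlled by the non-negative support of injective resolutions. The only (cosmetic) difference is that you decompose the map itself into three summands and use that the finite piece is literally an isomorphism, whereas the paper decomposes only the target and identifies $H^k(\eta)$ with a map $M_{-k}\to M_{-k}$ at the end.
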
 \begin{proof} Consider the canonical morphism $\eta: \coprod_{n \in
	\mathbb{Z}}M_n[n] \rightarrow \prod_{n \in \mathbb{Z}}M_n[n]$ in $\D(\Gcal)$
	and let us show that $\eta$ is a an isomorphism. Let $l \in \mathbb{Z}$ and
	let us compute $H^l(\eta)$. Since coproducts are exact in $\Gcal$, the
	coproduct is equivalently computed in $\C(\Gcal)$ and $H^l(\coprod_{n \in
	\mathbb{Z}}M_n[n])$ is clearly just $M_n$. On the other hand, the product
	$\prod_{n \in \mathbb{Z}}M_n[n]$ isomorphic in $\D(\Gcal)$ to the product
	$\prod_{n \in \mathbb{Z}}E_n[n]$ computed in $\C(\Gcal)$, where $E_n$ is an
	injective resolution of $M_n$ for each $n$. Consider the decomposition
	$\prod_{n \in \mathbb{Z}}E_n[n]$ = $\prod_{n > -l + d}E_n[n] \times \prod_{n
	= -l,\ldots,-l+d}E_n[n] \times \prod_{n < -l}E_n[n]$. Clearly, $\prod_{n <
	-l}E_n[n] \in \D^{>l}$. For any $n > -l + d$, we have $E_n[n] \cong M_n[n]
	\in \D^{<l-d}$, and using \cref{lem:cosmashing} we conclude that $\prod_{n >
	-l + d}E_n[n] \in \D^{<l}$. It follows that $H^l(\eta)$ factors as a map
	$H^l(\eta): M_l \rightarrow H^l(\prod_{n =
	-l,\ldots,-l+d}E_n[n])=\prod_{n=-l,\dots,-l+d}
	H^l(E_n[n])=H^l(E_l[l])$, and this is clearly an isomorphism.
\end{proof}
\begin{dfn}\label{dfn-fin-proj}
	An object $X$ of $\D(\Gcal)$ is of \newterm{finite projective dimension} if
	there is $n \in \mathbb{Z}$ such that $\Hom_{\D(\Gcal)}(X,\D^{\leq n})=0$.
\end{dfn}
\begin{rmk}\label{rem:fin-proj}
	If $M \in \Gcal$ and $k > 0$, then $\Hom_{\D(\Gcal)}(M,\D^{\leq -k})=0$
	holds if and only if $\Ext_\Gcal^i(M,N) = 0$ for all $i \geq k$ and $N \in
	\Gcal$; this is proved in large generality in \cite[Lemma 10]{NSZ}.
	Therefore, $M$ viewed as an object of $\D(\Gcal)$ is of finite projective
	dimension in the sense of \cref{dfn-fin-proj} if and only if $M$ has finite
	projective dimension in $\Gcal$ in the usual sense (as used e.g. in
	\cite[Hypothesis~7.1]{St}).

	Note also that essentially the same argument as the one in \cite[Lemma
	10]{NSZ} shows the following: $X \in \D^b(\Gcal)$ is of finite projective
	dimension if and only if $\Hom_{\D(\Gcal)}(X,\Gcal[i])$ for $i \gg 0$.


\end{rmk}
\begin{prop}\label{prop-sing-fp}
	Let $\Gcal$ be a locally coherent Grothendieck category such that
	$\D(\Gcal)$ is compactly generated. Consider the conditions for an object $X \in \D^b(\fp \Gcal)$:
	\begin{enumerate}
		\item[(i)] $X$ is of finite projective dimension,
		\item[(ii)] $X \in \D(\Gcal)^c$.
	\end{enumerate}
	The $(i) \implies (ii)$. If in addition $\Gcal$ satisfies $\Abfs d$ for some $d \geq 0$ then $(i) \iff (ii)$.
\end{prop}

\begin{proof}
	$(i) \Rightarrow (ii):$ Let $m \geq 0$ be such that $X \in D^{\leq m}(\Gcal)$. Because $X$ is of finite projective
	dimension, there is an $n \leq 0$ such that we can use soft
	truncations to obtain for any collection of objects $X_i \in \D(\Gcal), i
	\in I$ a chain of natural isomorphisms $\Hom_{\D(\Gcal)}(X,\coprod_{i \in
	I}X_i) \cong\Hom_{\D(\Gcal)}(X,\tau^{\geq n}\tau^{\leq m}\coprod_{i \in
	I}X_i) \cong \Hom_{\D(\Gcal)}(X,\coprod_{i \in I}\tau^{\geq n}\tau^{\leq
	m}X_i)$. It follows that $X$ is compact in $\D(\Gcal)$ if and only if it is
	compact in $\D^b(\Gcal)$. But $X \in \D^b(\Gcal)^c$ by
	\cref{rem:inclusion-bounded-compact}.

	$(i) \Rightarrow (ii):$ Suppose that there is $X \in \D^b(\fp \Gcal)$ which is
	not of finite projective dimension, and assume towards contradiction that $X$ is compact as an object of $\D(\Gcal)$. In view of \cref{rem:fin-proj}, there are objects $M_n \in \Gcal$
	for all $n \geq 0$ such that $\Ext_\Gcal^n(X,M_n) \neq 0$. By the $\Abfs d$ assumption and \cref{lem:product-coproduct}, there is an isomorphism $\coprod_{n \geq
	0}M_n[n] \cong \prod_{n \geq 0}M_n[n]$ in $\D(\Gcal)$, and so there is a
	morphism $X \rightarrow \coprod_{n \geq 0}M_n[n]$ which does not factor
	through any finite subcoproduct of $\coprod_{n \geq 0}M_n[n]$ in $\D(\Gcal)$
	(cf. \cite[Remark 1.11]{St2}). It follows that $X$ is not compact in
	$\D(\Gcal)$, a contradiction.
\end{proof}
\begin{cor}\label{prop-sing}
	Let $\Gcal$ be a locally coherent Grothendieck category such that
	$\D(\Gcal)$ is compactly generated. Consider the following conditions:
	\begin{enumerate}
		\item[(i)] $\D^\sing(\Gcal) = 0$,
		\item[(ii)] $\DS(\Gcal) = 0$,
		\item[(iii)] any object $F \in \fp \Gcal$ has finite projective dimension. 
	\end{enumerate}
	Then $(i) \iff (ii)$ and $(iii) \implies (i)$. If in addition $\Gcal$ satisfies $\Abfs d$ for some $d \geq 0$ then all the conditions are equivalent.
\end{cor}

\begin{proof}
	The equivalence of $(i)$ and $(ii)$ is clear from \cref{cor:Kac}.

	$(iii) \Rightarrow (i):$ The assumption implies that any object $F \in \D^b(\fp \Gcal)$ is of finite projective dimension, and the conclusion follows by \cref{prop-sing-fp}.

	$(i) \Rightarrow (iii):$ Suppose that there is $F \in \fp \Gcal$ which is
	not of finite projective dimension, then $F \not\in \D(\Gcal)^c$ by \cref{prop-sing-fp} and so it is a non-zero object in $\D^\sing(\Gcal)$.
\end{proof}

We conclude this section by showing that \cref{prop-sing} specializes neatly to
the case of the category of quasicoherent sheaves over a scheme. Following
\cite{Gar}, a quasicompact and quasiseparated scheme $X$ is \newterm{coherent}
if it admits a cover $X = \bigcup_{i \in I}\Spec{R_i}$ by open affine sets such
that $R_i$ is a coherent commutative ring for all $i \in I$. By a standard
argument \cite[Corollary 2.1]{Harris}, this is equivalent to any open affine
subset $\Spec R$ of $X$ being such that the ring $R$ is coherent. By
\cite[Proposition~9.2]{Gar}, $X$ is coherent if and only if the Grothendieck
category $\Qcoh X$ of quasicoherent sheaves is locally coherent. It follows that
$\fp {\Qcoh X} = \coh X$, the category of coherent sheaves, and $\D^b(\fp {\Qcoh
X}) = \D^b(\coh X)$.

The classical notion of a regular noetherian ring admits the following
generalization to coherent rings, here we follow \cite{Ber} and \cite{Glaz}. A
coherent commutative ring $R$ is \newterm{regular} if any finitely generated
ideal has finite projective dimension. By \cite{Glaz}, this is equivalent to any
finitely presented $R$-module being of finite projective dimension. 

It is then natural to call a coherent scheme $X$ \newterm{regular} if it admits
a cover $X = \bigcup_{i \in I}\Spec {R_i}$ where $R_i$ are regular coherent
rings. Since regular coherent rings descent along faithfully flat morphisms
\cite[Theorem 6.2.5]{Glaz}, this is equivalent to any open affine subset $\Spec
R$ of $X$ being such that $R$ is regular coherent.

\begin{rmk}\label{orlov-scheme}
If $X$ is in addition separated, the derived category $\D(\Qcoh X)$ is compactly
generated by \cite[\S3]{BvB}, and the compact objects are up to isomorphism
precisely the perfect complexes, that is, complexes which are locally
quasi-isomorphic to bounded complexes of vector bundles. Therefore, our singularity category $\D^\sing(\Qcoh X)$ takes the familiar form $\D^b(\coh X)/\Per X$, where $\Per X$ is the full subcategory of objects quasi-isomorphic to perfect complexes. In particular, our notion of singularity category recovers Orlov's original definition of a singularity category for separated noetherian schemes \cite{orlo-06}.
\end{rmk}

\begin{prop}\label{cor:scheme}
	Let $X$ be a separated coherent scheme. The following are equivalent:
	\begin{enumerate}
		\item[(i)] $\D^\sing(\Qcoh X) = 0$,
		\item[(ii)] $X$ is regular,
		\item[(iii)] any coherent sheaf has finite projective dimension in $\Qcoh X$.
	\end{enumerate} 
\end{prop}

\begin{proof}
	$(i) \Leftrightarrow (ii)$: If $\mathcal{F} \in \D^b(\coh X)$, we can check whether
	$\mathcal{F} \in \D(\Qcoh X)^c$ locally on an open affine cover, and any
	such restriction becomes a bounded complex of finitely presented modules.
	Therefore, since regularity is also a local property, the task reduces to
	the case of $X$ being an affine scheme. But this case follows directly from
	\cref{prop-sing}, because for an affine scheme $X$ the category $\Qcoh X$ is
	equivalent to a module category, and thus has exact products.

	$(i) \Leftrightarrow (iii):$ Since $\coh X = \fp{\Qcoh X}$, this follows
	from \cref{prop-sing} because $\Qcoh X$ satisfies $\Abfs d$ for some $d \geq
	0$, see \cite[Remark 3.3]{HX}.
\end{proof}

\subsection{Singularity category of a small abelian category}\label{subsec:small} Let $\Acal$ be a (skeletally) small abelian category and $\D^b(\Acal)$ its bounded derived category. Following Roos \cite{Roo69}, we assign to $\Acal$ the category $\widehat{\Acal} := \Lex(\Acal^{op},\Ab)$ of all left exact contravariant additive functors from $\Acal$ to the category of abelian groups. Alternatively, following Crowley-Boevey \cite{CW94}, $\widehat{\Acal}$ can be described as the category of all flat right $\Acal$-modules, when we consider $\Acal$ as a ring with several objects. Then the assignment $\Acal \to \widehat{\Acal}$ induces a bijective correspondence between the equivalence classes of small abelian categories and the equivalence classes of locally coherent Grothendieck categories, \cite[Proposition 2]{Roo69} or \cite[1.4]{CW94}. The converse assignment assigns to a locally coherent Grothendieck category $\Gcal$ the skeletally small abelian category $\fp \Gcal$. In particular, $\fp {\widehat{\Acal}}$ identifies with $\Acal$.

This viewpoint allows us to transport our definition of a singularity category to a large class of small abelian categories. Let $\Acal$ be a small abelian category such that $\D(\widehat{\Acal})$ is compactly generated. Then we define the singularity category of $\Acal$ to be precisely the singularity category of the locally coherent Grothendieck category $\widehat{\Acal}$, that is, we put 
$$\D^\sing(\Acal) := \D^\sing(\widehat{\Acal}) = \D^b(\fp{\widehat{\Acal}})/\D(\widehat{\Acal})^c \cong \D^b(\Acal)/\D(\widehat{\Acal})^c.$$

Assume now that $\Acal$ is a small abelian category which admits a set of generators such that their projective dimension is uniformly bounded by some $d \geq 0$. It follows that the locally coherent Grothendieck category $\widehat{\Acal}$ satisfies \cite[Hypothesis 7.1]{St}. As a consequence, $\D(\widehat{\Acal})$ is compactly generated \cite[Proposition 7.4]{St}. Furthermore, the same assumption ensures that $\widehat{\Acal}$ has a generator of projective dimension $d$, and therefore $\widehat{\Acal}$ satisfies the axiom $\Abfs{d}$. By \cref{prop-sing-fp}, we see that $\D^\sing(\Acal)$ is equal to the quotient of $\D^b(\Acal)$ by the subcategory consisting of all objects of finite projective dimension. 

In another words, we obtain $\D^\sing(\Acal)$ as the quotient of $\D^b(\Acal)$ over the thick subcategory:
$$\{X \in \D^b(\Acal) \mid \exists n \in \Zbb ~\forall Y \in \Acal ~\forall i>n: \Hom_{\D^b(\Acal)}(X,Y[i]) = 0\}.$$ 
In this viewpoint, we can make a direct comparison to Orlov's definition of a singularity category of a general triangulated category, as defined in \cite[Definition 1.7]{orlo-06}. Applied to $\D^b(\Acal)$, Orlov's definition yields a quotient over the following thick subcategory, defined by a formula in which the quantifiers appear in a slightly different order:
$$\{X \in \D^b(\Acal) \mid \forall Y \in \Acal ~\exists n \in \Zbb ~\forall i>n: \Hom_{\D^b(\Acal)}(X,Y[i]) = 0\}.$$ 
Finally, we recall that Orlov proved in \cite[Proposition 1.11]{orlo-06} that the two subcategories on display coincide in the case $\Acal = \coh X$ where $X$ is a separated noetherian scheme of finite Krull dimension with enough locally free sheaves. On the other hand, there exists an essentially small abelian category $\Acal$ with a set of projective generators for which these two subcategories do not coincide, see \cite[Example 3.3]{Zh18}.
\section{Locally coherent Grothendieck categories from restrictable $t$-structures}\label{sec:restrictable}

In this section we focus on a particular source of Grothendieck categories which satisfy
the hypothesis of \cref{theorem:recollement}, that is, they are locally coherent
and their derived category is compactly generated. We are going to consider the hearts of intermediate t-structures in the derived category of commutative noetherian ring which are induced by a cotilting complex. In the following two subsections we set the scene by gathering the necessary concepts and results. Then we proceed to characterize the cotilting t-structures whose heart is locally coherent as those satisfying a restrictability condition. The notion of objects of type $\FPinf$ of the previous section will again play an important role here.

\subsection{Compactly generated and restrictable $t$-structures in $\D(\Mod{R})$}

A $t$-structure $\Tbb = (\Ucal,\Vcal)$ is \newterm{compactly generated} if there
is a set $\Scal \subseteq \Tcal^c$ such that $\Vcal = \Scal\Perp{0}$, or
equivalently, if $\Vcal = (\Ucal \cap \Tcal^c)\Perp{0}$.

Alonso Tarrío, Jeremías López and Saorín \cite{AJS10} showed that compactly
generated $t$-structures admit a full classification in geometric terms in the
case $\Tcal = \D(\Mod{R})$, the unbounded derived category of a commutative
noetherian ring $R$. Let $\Spec{R}$ denote the Zariski spectrum of $R$. A subset
$V$ of $\Spec{R}$ is called \newterm{specialization closed} if $V$ is a union of
Zariski-closed sets (equivalently, $V$ is an upper subset of the poset
$(\Spec{R},\subseteq)$). An \newterm{sp-filtration} of $\Spec{R}$ is an
order-preserving function $\Phi: \mathbb{Z} \rightarrow 2^{\Spec{R}}$ such that
$\Phi(n)$ is a specialization closed subset for each $n \in \mathbb{Z}$.

\begin{thm}\emph{(\cite[Theorem 3.10]{AJS10})}\label{thm-AJS}
	Let $R$ be a commutative noetherian ring. There is a bijective
	correspondence between sp-filtrations $\Phi$ of $\Spec{R}$ and the set of
	compactly generated $t$-structures in $\D(\Mod{R})$. The bijection assigns
	to $\Phi$ a $t$-structure with the aisle $\Ucal_\Phi$ defined as follows:
	\[\Ucal_\Phi = \{X \in \D(\Mod{R}) \mid \Supp H^n(X) \subseteq \Phi(n)
		~\forall n \in \mathbb{Z}\}.\]
\end{thm}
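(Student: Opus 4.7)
The plan is to produce explicit maps in both directions and check that they are mutually inverse. The key commutative-algebraic ingredient is the Thomason--Neeman classification of thick subcategories of $\D(\Mod R)^c$ (the perfect complexes) by specialization-closed subsets of $\Spec R$: for each specialization-closed $V\subseteq \Spec R$, every compact object of $\D(\Mod R)$ with cohomological support contained in $V$ lies in the thick subcategory generated by the Koszul complexes $K(\underline{a})$ with $V(\underline{a})\subseteq V$, each of which is perfect and satisfies $\Supp H^0(K(\underline{a}))=V(\underline{a})$.

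For the forward direction, given an sp-filtration $\Phi$, put
\[
\Scal_\Phi := \{K(\underline{a})[-n] : n\in\Zbb,\ V(\underline{a})\subseteq \Phi(n)\} \subseteq \D(\Mod R)^c,
\]
and let $\Ucal$ be the smallest subcategory of $\D(\Mod R)$ containing $\Scal_\Phi$ and closed under positive shifts, extensions, and coproducts. By the standard construction of a $t$-structure generated by a set of compact objects, $\Ucal$ is the aisle of a compactly generated $t$-structure. The inclusion $\Ucal\subseteq\Ucal_\Phi$ is a direct check on generators: each $K(\underline{a})[-n]$ has cohomology concentrated in degrees $\geq n$ with support inside $V(\underline{a})\subseteq \Phi(n)\subseteq \Phi(k)$ for every $k\geq n$ (by order-preservation of $\Phi$), and $\Ucal_\Phi$ is manifestly stable under the relevant operations.

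The reverse inclusion $\Ucal_\Phi\subseteq\Ucal$ is the main technical step. I would first show that every $R$-module $M$ with $\Supp M\subseteq \Phi(n)$ satisfies $M[-n]\in\Ucal$: write $M$ as a directed colimit of finitely generated submodules $M_i$, each of which admits by noetherianness a finite filtration with cyclic factors $R/\pp_{i,j}$ where $\pp_{i,j}\in\Supp M_i\subseteq \Phi(n)$; since every such $R/\pp$ is obtained (via extensions and cohomology) from the Koszul complex $K(\underline{a})$ with $\underline{a}$ generating an ideal whose radical is $\pp$, the generators in $\Scal_\Phi$ suffice to build $M[-n]$ in $\Ucal$ by extensions, coproducts, and shifts. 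Next, any $X\in\Ucal_\Phi$ with bounded cohomology is assembled from its cohomologies by an induction on amplitude using the soft-truncation triangles $\tau^{\leq k-1}X\to\tau^{\leq k}X\to H^k(X)[-k]$. Finally, an unbounded $X\in\Ucal_\Phi$ is recovered as $X\cong\hocolim_k \tau^{\leq k}X$, reducing to the bounded case.

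For the reverse direction, given a compactly generated $t$-structure $(\Ucal,\Vcal)$ set $\Phi_\Ucal(n):=\bigcup\{\Supp H^n(C): C\in\Ucal\cap\D(\Mod R)^c\}$, which is automatically specialization-closed (supports of modules over commutative rings always are) and satisfies $\Phi_\Ucal(n)\subseteq\Phi_\Ucal(n+1)$ since $\Ucal[1]\subseteq\Ucal$. The identities $\Ucal_{\Phi_\Ucal}=\Ucal$ and $\Phi_{\Ucal_\Phi}=\Phi$ then follow formally from the forward direction, the Thomason--Neeman classification (applied degree-by-degree), and the fact that a compactly generated $t$-structure is determined by its compact aisle. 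The only substantive obstacle is the inclusion $\Ucal_\Phi\subseteq\Ucal$ above, where the support-theoretic description must be converted into a categorical closure property; the rest is essentially formal.
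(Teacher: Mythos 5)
The paper itself does not prove this statement: it is quoted verbatim from \cite[Theorem 3.10]{AJS10}, so there is no in-paper proof to compare against. Your sketch broadly follows the strategy of that reference (Koszul complexes as compact generators of the aisle, supports of cohomologies of compact objects to recover the sp-filtration), but there are two points to flag.

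First, a sign issue. You assert that $\Phi(n)\subseteq\Phi(k)$ for $k\geq n$, i.e.\ that $\Phi$ is increasing. In fact the sp-filtrations in \cite{AJS10} (and in this paper — compare the HRS example $\Phi(n)=\Spec R$ for $n<0$, $\Phi(n)=\emptyset$ for $n>0$, and $\Phi_{\mathsf{d}}(n)=\{\pp:\mathsf{d}(\pp)>n\}$) are \emph{decreasing}: $\Phi(n)\supseteq\Phi(n+1)$. This is forced by the requirement that $\Ucal_\Phi$ be closed under $[1]$, since $\Supp H^n(X[1])=\Supp H^{n+1}(X)$. Your verification that $\Scal_\Phi\subseteq\Ucal_\Phi$ compensates by placing $K(\underline a)[-n]$ in degrees $\geq n$; with the Koszul complex in its usual nonpositive degrees and $\Phi$ decreasing, the computation goes through as you intend, so this is a notational slip rather than a fatal error, but it should be corrected for the argument to be consistent with the theorem statement.

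Second, and more substantively, the claim that ``every such $R/\pp$ is obtained (via extensions and cohomology) from the Koszul complex $K(\underline a)$'' is the real content of the theorem and is not justified. Preaisles (closed under coproducts, extensions, and $[1]$) are closed under cones of morphisms between their members, but they are \emph{not} closed under taking cohomology or under soft truncation from above. The natural triangle $\tau^{\leq -1}K(\underline a)\to K(\underline a)\to R/\pp\to$ only yields $R/\pp\in\Ucal$ once one already knows $\tau^{\leq -1}K(\underline a)\in\Ucal$, and the cohomologies of $\tau^{\leq -1}K(\underline a)$ are again supported on $V(\pp)$ and may have $\pp$ among their associated primes, so the obvious Noetherian induction is circular. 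Establishing that every module supported on a specialization-closed $V$ lies in the cocomplete preaisle generated by $\{K(\underline a):V(\underline a)\subseteq V\}$ is precisely the work done in \cite[\S 3]{AJS10} (their Lemma~3.3 and Proposition~3.8 and their neighbourhood), and it cannot be dispatched with the phrase ``extensions and cohomology''. The rest of your outline — generating the preaisle from compact Koszul objects, assembling bounded complexes by induction on amplitude, passing to the unbounded case via $\hocolim$, and reading off $\Phi_\Ucal$ from supports of compact objects — is the correct architecture; this one lemma is where the commutative algebra is genuinely hard, and the proposal as written leaves it unproved.
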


\begin{dfn}
	Let $\Gcal$ be a locally coherent Grothendieck category, that is, a locally
	finitely presented Grothendieck category such that the full subcategory $\fp
	\Gcal$ of finitely presented objects forms an abelian subcategory of
	$\Gcal$. A $t$-structure $\Tbb = (\Ucal,\Vcal)$ in $\D(\Gcal)$ is
	\newterm{restrictable} if the
	pair $(\Ucal \cap \D^b(\fp \Gcal),\Vcal \cap \D^b(\fp \Gcal))$ is a
	$t$-structure in $\D^b(\fp \Gcal)$.
\end{dfn}

Under mild assumption on a commutative noetherian ring $R$, the restrictability
of a compactly generated $t$-structure in $\D(\Mod{R})$ can be read rather
directly from the associated sp-filtration. For the definition of a pointwise
dualizing complex we refer the reader e.g. to \cite[\S~6]{AJS10}; in particular,
any (classical) dualizing complex is a pointwise dualizing complex.

\begin{thm}\emph{(\cite[Corollary 4.5, Theorem 6.9]{AJS10})}\label{weak-Cousin}
	Let $R$ be a commutative noetherian ring and let $\Tbb$ be the compactly
	generated $t$-structure corresponding to an sp-filtration $\Phi$. Consider
	the following two conditions:
	\begin{itemize}
		\item[(i)] $\Tbb$ is restrictable (to $\D^b(\mod R)$),
		\item[(ii)] $\Phi$ satisfies the \newterm{weak Cousin condition}, that
			is, whenever $\pp \subsetneq \qq$ are prime ideals such that $\qq$
			is minimal over $\pp$, then for any $n \in \mathbb{Z}$ the
			implication $\qq \in \Phi(n) \Rightarrow \pp \in \Phi(n-1)$ holds.
	\end{itemize}
	Then $(i) \Rightarrow (ii)$ holds. Furthermore, if $R$ admits a pointwise
	dualizing complex then also $(ii) \Rightarrow (i)$ holds.
\end{thm}

\begin{rmk}\label{ubiqutous}
	Let $R$ be a commutative noetherian ring. Restrictable $t$-structures in
	$\D(\Mod R)$ are ubiquitous:
	\begin{itemize}[wide=0pt]
		\item \cite[Theorem 2.16, Remark 2.7]{PV2} A Happel-Reiten-Smal\o~ (HRS)
			$t$-structure obtained from a hereditary torsion pair in $\Mod R$ is
			compactly generated and restrictable. In view of \cref{thm-AJS},
			these $t$-structures correspond to sp-filtrations $\Phi$ such that
			$\Phi(n) = \Spec R$ for all $n<0$ and $\Phi(n) = \emptyset$ for all
			$n>0$.
		\item Assume that $\mathsf{d}$ is a codimension function on $\Spec R$,
			that is, a function $\mathsf{d}: \Spec R \rightarrow \mathbb{Z}$
			such that $\mathsf{d}(\qq) = \mathsf{d}(\pp) + 1$ whenever $\pp
			\subsetneq \qq$ are primes with $\qq$ minimal over $\pp$. Then the
			assignment $\Phi_\mathsf{d}(n) = \{\pp \in \Spec R \mid
			\mathsf{d}(\pp) > n\}$ defines an sp-filtration which satisfies the
			weak Cousin condition.
		
			Furthermore, any pointwise dualizing complex $D$ induces a
			codimension function $\mathsf{d}_D$ \cite[p. 287]{Ha}, and therefore
			a restrictable $t$-structure, see \cite[\S6]{AJS10}.
		\item  Following \cite[\S6.4]{AJS10}, if $R$ admits a dualizing complex
			$D$, the restrictable $t$-structure induced by the codimension
			function $\mathsf{d}_D$ has a particularly nice description. The
			functor $\RHom_R(-,D)$ induces a duality functor on the category
			$\D^b(\mod R)$, and therefore it sends the standard $t$-structure to
			another $t$-structure on $\D^b(\mod R)$, called the
			\newterm{Cohen-Macaulay $t$-structure}. This $t$-structure then
			naturally lifts to a restrictable $t$-structure in $\D(\Mod R)$, see
			\cite[\S3]{MZ}, and coincides with the compactly generated
			$t$-structure corresponding to the sp-filtration
			$\Phi_{\mathsf{d}_D}$.
	\end{itemize}
\end{rmk}

\subsection{Silting and cosilting $t$-structures and their realization
	functors}\label{sec:silting}

Let $\Tcal$ be a triangulated category and $M \in \Tcal$. We define the full
subcategories $M\Perp{>0}=\{X \in \Tcal \mid \Hom_\Tcal(M,X[i]) ~\forall i >
0\}$ and $\Perp{>0}M=\{X \in \Tcal \mid \Hom_\Tcal(X,M[i]) ~\forall i > 0\}$,
the subcategories $M\Perp{\leq 0}, M\Perp{< 0}$ and $\Perp{\leq 0}M, \Perp{<
0}M$ are defined analogously.

Following Psaroudakis-Vitória \cite{PV} and Nicolás-Saorín-Zvonareva \cite{NSZ},
an object $T$ in $\Tcal$ is \newterm{silting} if the pair
$(T\Perp{>0},T\Perp{\leq 0})$ is a $t$-structure in $\Tcal$, which we call a
\newterm{silting $t$-structure}. A silting object $T$ (as well as the induced
$t$-structure) is called \newterm{tilting} if $\Add(T) \subseteq T\Perp{<0}$,
where $\Add(T)$ is the smallest full subcategory of $\Tcal$ containing $T$ and closed
under all coproducts and retracts. Dually, an object $C \in \Tcal$ is
\newterm{cosilting} if the pair $(\Perp{\leq 0}C,\Perp{>0}C)$ is a $t$-structure
in $\Tcal$, which we call a \newterm{cosilting $t$-structure}. A cosilting
object $C$ (as well as the induced $t$-structure) is called \newterm{cotilting}
if $\Prod(C) \subseteq \Perp{<0}C$, where $\Prod(C)$ is the smallest subcategory
of $\Tcal$ containing $C$ and closed under all products and retracts.

(Co)silting and (co)tilting objects serve to study triangle equivalences, often
induced by the realization functors associated to the induced (co)silting
$t$-structures. Let us specialize now to the case $\Tcal = \D(\Gcal)$, where
$\Gcal$ is a Grothendieck category. Given a (co)silting object $M \in \D(\Gcal)$
denote the heart of the silting $t$-structure $\Tbb_M$ by $\Hcal_M$ and the
induced realization functor as $\real_M: \D^b(\Hcal_M) \rightarrow \D(\Gcal)$.
We call a (co)silting object in $\D(\Gcal)$ \newterm{bounded} if the induced
(co)silting $t$-structure is intermediate. Recall that the intermediacy
implies that the realization functor factors through $\D^b(\Gcal)$. Specializing
the result of Psaroudakis and Vitória to Grothendieck categories, we have the
following tilting theorem.

\begin{thm}\emph{\cite[Corollary 5.2]{PV}}\label{T:PV}
	Let $\Gcal$ be a Grothendieck category and $M \in \D(\Gcal)$ a bounded
	(co)silting object. Then $\real_M: \D^b(\Hcal_M) \rightarrow \D^b(\Gcal)$ is
	a triangle equivalence if and only if the object $M$ is (co)tilting.
\end{thm}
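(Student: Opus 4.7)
The plan is to focus on the silting/tilting case; the cosilting/cotilting case is completely dual. Since $M$ is bounded silting, the $t$-structure $\Tbb_M = (M\Perp{>0}, M\Perp{\leq 0})$ is intermediate, so the realization functor corestricts to a $t$-exact triangle functor $\real_M \colon \D^b(\Hcal_M) \to \D^b(\Gcal)$ which is the identity on $\Hcal_M$. I would split the analysis into essential surjectivity and full faithfulness. Essential surjectivity is a consequence of intermediacy alone: $\Tbb_M$ restricts to a bounded $t$-structure on $\D^b(\Gcal)$, so every $X \in \D^b(\Gcal)$ is a finite iterated extension of shifts of its $\Tbb_M$-cohomologies $H^n_{\Tbb_M}(X)\in \Hcal_M$, and all of these lie in the essential image of $\real_M$ since this image is a triangulated subcategory of $\D^b(\Gcal)$ containing $\Hcal_M$.

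The central issue is full faithfulness, which by the standard criterion for realization functors (see \cite[Proposition~3.17]{PV}) is equivalent to the natural maps
\[
\Ext^n_{\Hcal_M}(X, Y) \to \Hom_{\D(\Gcal)}(X, Y[n])
\]
being bijective for all $X, Y \in \Hcal_M$ and all $n \geq 0$. For the implication $(\Leftarrow)$, I would use the tilting hypothesis to place the whole of $\Add(M)$ inside $\Hcal_M$: silting already gives $\Add(M) \subseteq M\Perp{>0}$, and the extra assumption $\Add(M) \subseteq M\Perp{<0}$ completes the membership in $\Hcal_M = M\Perp{>0}\cap M\Perp{<0}$. The objects of $\Add(M)$ are thus Ext-projective in $\Hcal_M$ with vanishing nonzero self-extensions computed in $\D(\Gcal)$, and the silting axiom provides a derived generation property for $\Hcal_M$. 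For any $H\in \Hcal_M$, iterated truncation triangles associated with $\Tbb_M$ applied to canonical morphisms $M^{(I)}\to H$ yield $\Add(M)$-resolutions of $H$ that can be used to compute both sides of the comparison map degreewise, forcing them to agree.

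For the reverse direction $(\Rightarrow)$, suppose $\real_M$ is an equivalence. Since $\real_M$ is $t$-exact and identifies the hearts, it transports the standard $t$-structure on $\D^b(\Hcal_M)$ to the restriction of $\Tbb_M$. Under this equivalence, $M$ pulls back to an object $N \in \D^b(\Hcal_M)$ which is classically tilting with respect to the standard $t$-structure there; by the standard argument for classical tilting objects this forces $N \in \Hcal_M$, hence $M \in \Hcal_M$ and in particular $M \in M\Perp{<0}$. Closure of $M\Perp{<0}$ under coproducts and retracts in $\D(\Gcal)$ then upgrades this to $\Add(M) \subseteq M\Perp{<0}$, completing the tilting condition. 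The main obstacle is the fully-faithful step in $(\Leftarrow)$: the heart $\Hcal_M$ of a general silting $t$-structure need not be a Grothendieck category, nor even have exact products or coproducts, so the $\Add(M)$-resolution argument cannot be carried out inside $\Hcal_M$ using standard abelian tools. It must instead be executed at the level of the ambient triangulated category $\D(\Gcal)$, leveraging the intermediate $t$-structure $\Tbb_M$ to control the cohomologies of these resolutions and reduce the comparison of $\Ext$-groups to the vanishing properties provided by tilting.
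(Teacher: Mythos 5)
The paper offers no proof of this statement: it is imported verbatim from \cite[Corollary 5.2]{PV}, so there is no internal argument to compare yours against. Judging the proposal on its own terms: your direction ``tilting $\Rightarrow$ equivalence'' follows the standard route (reduce full faithfulness to the comparison maps $\Ext^n_{\Hcal_M}(X,Y)\to\Hom_{\D(\Gcal)}(X,Y[n])$, observe that the tilting condition places all of $\Add(M)$ inside the heart as Ext-projective objects with vanishing higher $\Hom$'s into $\Hcal_M$ computed in $\D(\Gcal)$, and dimension-shift along $\Add(M)$-presentations), and your treatment of essential surjectivity via intermediacy is fine.

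The direction ``equivalence $\Rightarrow$ tilting'' has a genuine gap. Even granting that you can place $M$ itself in $\Hcal_M$ (which already needs care: detecting that the preimage $N$ lies in $\D^{\geq 0}(\Hcal_M)$ is usually done by mapping into injective objects of $\Hcal_M$, and a silting heart is only known to have a \emph{projective} generator), the passage from $M\in M^{\perp_{<0}}$ to $\Add(M)\subseteq M^{\perp_{<0}}$ by ``closure under coproducts and retracts'' is invalid. The class $M^{\perp_{<0}}$ is a shift of the coaisle of $\Tbb_M$, hence closed under products and retracts, but it is closed under coproducts only when $M$ is compact (or at least self-small in the relevant degrees). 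For large silting objects this is precisely the nontrivial content of the tilting condition and the reason it is imposed on $\Add(M)$ rather than on $M$ alone. The correct argument compares the coproduct $M^{(I)}$ formed in $\D(\Gcal)$ with the coproduct $\overline{M}^{(I)}=H^0_{\Tbb_M}(M^{(I)})$ formed in the heart: full faithfulness identifies $\Hom_{\D(\Gcal)}(\overline{M}^{(I)},Y[n])$ with $\Ext^n_{\Hcal_M}(\overline{M}^{(I)},Y)$, which vanishes for $n>0$ since $\overline{M}^{(I)}$ is projective in $\Hcal_M$; one then checks that the truncation map $M^{(I)}\to\overline{M}^{(I)}$ induces isomorphisms on all $\Hom(-,Y[n])$ with $Y\in\Hcal_M$, so its cone is a bounded object of the aisle orthogonal to every shift of the heart and must vanish. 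This gives $M^{(I)}\in\Hcal_M$, i.e. $\Add(M)\subseteq M^{\perp_{<0}}$, which your argument does not reach.
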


We remark that if $T$ is a silting object then $\Hcal_T$ is an abelian category
with a projective generator \cite{PV}. If $T$ is (additively equivalent to) a
compact object of $\D(\Gcal)$ then it follows that $\Hcal_T$ is equivalent to a
module category, and if in addition $T$ is tilting then we have $\Hcal_T \cong
\Mod{\End_{\D(\Gcal)}(T)}$ \cite[Corollary 4.7]{PV}. On the other hand, consider
a module category $\Mod R$ and a bounded cosilting object $C \in \D(\Mod{R})$.
Then it is known that the heart $\Hcal_C$ is a Grothendieck category
\cite[Proposition~3.10]{MV}. 

In \cite{Vi}, Virili extended the (co)tilting realization functors to the
unbounded level by constructing realization equivalences of standard derivators.
See also the formulation \cite[Theorem E]{Vi} characterizing restrictable
derived equivalences.

\begin{thm}\emph{\cite[Theorem C, D]{Vi}}\label{T:Vi}
	Let $\Gcal$ be a Grothendieck category and $M \in \D(\Gcal)$ a bounded
	tilting (resp. cotilting) object. Then there is an
	equivalence $\mathfrak{real}_M: \mathfrak{D}_{\Hcal_M} \rightarrow
	\mathfrak{D}_\Gcal$ of derivators which is bounded.
\end{thm}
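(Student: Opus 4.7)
My plan is to construct the equivalence of derivators pointwise: for each $I\in\Cat$, produce a triangle equivalence $\mathfrak{real}_M^I\colon \D(\Hcal_M^I) \toeq \D(\Gcal^I)$, and then verify the 2-functoriality square \cref{square-derivator} for every $u\colon I\to J$ in $\Cat$. Compatibility with Kan extensions then follows for free by passing to adjoints, as was recalled after \cref{square-derivator-hocolim}.

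The pivotal observation is that the constant diagram $M^I \in \D(\Gcal^I)$ inherits the bounded (co)tilting property from $M$. Indeed, shifts, products, coproducts, and the orthogonality conditions cutting out $M\Perp{>0}$ and $\Perp{>0}M$ in $\D(\Gcal^I)$ are all computed coordinate-wise, so the induced silting (resp.\ cosilting) $t$-structure in $\D(\Gcal^I)$ has aisle and coaisle formed by those diagrams whose values lie in the aisle and coaisle of $\Tbb_M$, and heart equal to the functor category $\Hcal_M^I$. Boundedness, as well as $\Add(M)\subseteq M\Perp{<0}$ resp.\ $\Prod(M)\subseteq\Perp{<0}M$, are also inherited pointwise. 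Hence \cref{T:PV} provides, for each $I$, a bounded realization equivalence $\real^b_{M^I}\colon \D^b(\Hcal_M^I) \toeq \D^b(\Gcal^I)$.

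To promote each such $\real^b_{M^I}$ to an unbounded equivalence, I would use in the tilting case that $\Hcal_T$ carries a projective generator (so $\Hcal_T^I$ does too), making $\D(\Hcal_T^I)$ compactly generated by shifts of this generator; the realization sends those to shifts of $M^I$, and after checking that it extends to a coproduct-preserving triangle functor at the unbounded level (via dg-projective resolutions in the enhancement), \cref{lemma:double-devissage} yields the desired equivalence. The cotilting case is treated dually, with $\Hcal_C^I$ a Grothendieck category, $C^I$ an injective cogenerator, and products in place of coproducts.

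The 2-functoriality is essentially automatic: both sides of \cref{square-derivator} are precomposition with $u$, and the assignments $M\mapsto M^I$ (and on the heart $X\mapsto X^I$) are strictly compatible with it, so the squares commute up to natural equivalence. The bounded property of the morphism is immediate by construction, since $\mathfrak{real}_M^I$ restricts on the nose to the bounded realization of \cref{T:PV}. The main anticipated obstacle is precisely the unbounded extension in the cotilting case: $C$ is typically not compact in $\D(\Gcal)$, so \cref{lemma:double-devissage} does not apply directly, and one must invoke a cogeneration principle (via pure-injectivity of cotilting objects) or a suitable dualization of the dévissage argument.
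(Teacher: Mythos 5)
Your proposal hinges on the claim that the orthogonality conditions cutting out $(M^I)\Perp{>0}$ and $\Perp{>0}(M^I)$ in $\D(\Gcal^I)$ are computed coordinate-wise, and hence that $M^I$ is a (co)tilting object with heart $\Hcal_M^I$. This claim is incorrect. The constant diagram is $M^I = \pi_I^*M$ for $\pi_I\colon I\to\star$, and the derivator adjunctions give
\[\Hom_{\D(\Gcal^I)}(\pi_I^*M, X)\cong \Hom_{\D(\Gcal)}(M, \mathsf{holim}_I X),\qquad \Hom_{\D(\Gcal^I)}(X,\pi_I^*M)\cong \Hom_{\D(\Gcal)}(\mathsf{hocolim}_I X, M),\]
so membership in the orthogonals is a homotopy (co)limit condition on $X$, not a condition on the coordinates $X_i$. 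Already at the abelian level one has $\Hom_{\Gcal^I}(M^I,X)=\lim_{i\in I}\Hom_\Gcal(M,X_i)$, a limit rather than a product. Consequently $M^I$ does not in general induce the pointwise $t$-structure on $\D(\Gcal^I)$; the object that would do so (when it is silting/cosilting at all) is a (co)product of (co)induced diagrams $\coprod_i(\iota_i)_!M$, resp. $\prod_i(\iota_i)_*M$, not the constant diagram. A second gap is the identification of the heart of the pointwise $t$-structure on $\D(\Gcal^I)$ with the functor category $\Hcal_M^I$: this is not a tautology, since the heart consists of coherent diagrams with values in $\Hcal_M$, and passing from coherent to incoherent diagrams is exactly where derivator theory is subtle; it is in fact a theorem, not an observation. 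Finally, as you yourself flag, the promotion from bounded to unbounded realizations, especially in the cotilting case where $C$ is not compact and $\D(\Hcal_C^I)$ need not be compactly generated by shifts of $C^I$, requires a genuinely different mechanism than \cref{lemma:double-devissage}.

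For what it is worth, the present paper does not prove this theorem but cites it directly from \cite{Vi}. Virili's construction does not proceed by a naive pointwise reduction to the bounded realization at each $I$; instead, he constructs a morphism of derivators $\mathfrak{real}_M$ globally using an enhancement-level argument and the theory of $t$-generating and co-$t$-generating objects, after which the pointwise identification of hearts falls out as a consequence rather than an input. Your proposal would need to replace the coordinate-wise orthogonality claim with the correct (co)induced generators and then independently establish the heart identification before the rest of the strategy could be carried out.
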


In the situation of Theorem~\ref{T:Vi}, we denote the triangle equivalence on
the base as $\real_M := \mathfrak{real}_M^\star$. Then the triangle equivalence
$\real_M: \D(\Hcal_M) \rightarrow \D(\Gcal)$ is an unbounded realization functor
\cite[Theorem 7.7, Theorem 7.9]{Vi} which restricts to a bounded realization
functor $\D^b(\Hcal_M) \rightarrow \D^b(\Gcal)$ which is an equivalence.



A compilation of known results gives a nice characterization of $t$-structures
in $\D(\Mod{R})$ induced by bounded cotilting objects amongst all intermediate
$t$-structures when $R$ is commutative noetherian.

\begin{thm}\emph{(\cite{PV,HN})}\label{theorem:cotilting-real}
	Let $R$ be a commutative noetherian ring and $\Tbb$ an intermediate
	$t$-structure in $\D(\Mod{R})$. The following are equivalent:
	\begin{enumerate}[label=(\roman*)]
		\item there is a triangle equivalence $\D(\Hcal_\Tbb) \rightarrow
			\D(\Mod{R})$ which restricts to the bounded level and $\Hcal_\Tbb$
			is a locally finitely presented Grothendieck category,			
		\item the realization functor $\real^b_{\Tbb}: \D^b(\Hcal_\Tbb)
			\rightarrow \D^b(\Mod{R})$ is an equivalence and $\Hcal_\Tbb$ is a
			Grothendieck category,
		\item $\Tbb$ is a cotilting $t$-structure. 
	\end{enumerate}
\end{thm}

\begin{proof}
	$(i) \Rightarrow (ii):$ Clear.

	$(ii) \Rightarrow (iii):$ This is \cref{T:PV}.

	$(iii) \Rightarrow (i):$ The first part follows by \cref{T:Vi}. Since $R$ is
	commutative noetherian, it is is known that $\Tbb_\Tbb$ is a compactly
	generated $t$-structure \cite{Hcom,HN} and the heart $\Hcal_\Tbb$ is
	a locally finitely presentable Grothendieck category \cite{SS20}. 
\end{proof}

Finally, we record a recently established strong connection between the
cotilting property and restrictability of the associated $t$-structure.

\begin{thm}[{\cite[Corollary~6.18]{PV2}}]
	Let $R$ be a commutative noetherian ring and $\Tbb$ be an intermediate,
	compactly generated, and restrictable $t$-structure in $\D(\Mod{R})$. Then
	$\Tbb$ is a cotilting $t$-structure.
\end{thm}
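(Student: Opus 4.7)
The plan is to leverage the AJS classification (\cref{thm-AJS}) to translate the statement into a combinatorial condition on sp-filtrations, and then to build an explicit cotilting object realising $\Tbb$. Let $\Phi$ be the sp-filtration corresponding to $\Tbb$. The intermediacy of $\Tbb$ forces $\Phi$ to stabilise at $\Spec{R}$ for $n \ll 0$ and at $\emptyset$ for $n \gg 0$, and the restrictability hypothesis yields the weak Cousin condition on $\Phi$ by the first implication of the theorem recalled in \cref{prelim:intermediate}.

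From $\Phi$ I would construct a candidate cosilting object as follows. For each prime $\pp \in \Spec{R}$, set $d(\pp) = \min\{n \in \Zbb \mid \pp \notin \Phi(n)\}$ and define
\[ C = \prod_{\pp \in \Spec{R}} E(R/\pp)[-d(\pp)], \]
where $E(R/\pp)$ denotes the injective hull of $R/\pp$. Being a product of injective modules, $C$ is automatically pure-injective. One then verifies that $C$ is cosilting and that the induced $t$-structure $\Tbb_C$ coincides with $\Tbb$. The required $\Hom$-vanishing in positive degrees against the compact generators of $\D(\Mod{R})$ --- Koszul complexes on generators of various ideals --- follows from the identification of the relevant $\Ext$-groups with $E(R/\pp)$ in terms of local cohomology data, matched with the support information encoded in $\Phi$.

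The final and technically hardest step is to verify the cotilting condition $\Prod(C) \subseteq \Perp{<0}C$. Since $\Prod(C)$ consists of retracts of products of shifted copies of $C$, it suffices to establish
\[ \Ext^j_R(E(R/\pp), E(R/\qq)) = 0 \quad \text{whenever} \quad j < d(\pp) - d(\qq), \]
for every pair of primes $\pp, \qq$. By Matlis duality at $\qq$ together with the structure of minimal injective resolutions over commutative noetherian rings, these $\Ext$-groups vanish unless $\pp \subseteq \qq$, and when nonzero they are concentrated in a range determined by the height difference between $\pp$ and $\qq$. The weak Cousin condition is precisely the combinatorial mechanism forcing $d(\pp) - d(\qq)$ to remain within these height bounds, granting the required vanishing.

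The main obstacle is this last step: translating the weak Cousin condition into genuine $\Ext$-vanishing between shifted injective envelopes. Without this condition, primes appearing in consecutive spots of a minimal injective resolution could be assigned shifts by $\Phi$ that are too far apart, breaking the cotilting inequality. All earlier steps are either direct invocations of the AJS bijection or standard computations with injective resolutions over commutative noetherian rings.
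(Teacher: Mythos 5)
The paper offers no proof of this statement --- it is imported verbatim from \cite[Corollary~6.18]{PV2} --- so your argument has to stand on its own, and it does not: the candidate cosilting object is wrong. Test it on the simplest nontrivial instance the paper itself highlights in \cref{ubiqutous}, the HRS $t$-structure of a hereditary torsion pair. Take $R=\Zbb$ and the torsion pair (torsion, torsion-free), so $\Phi(n)=\Spec{\Zbb}$ for $n<0$, $\Phi(0)=\{\text{maximal ideals}\}$, $\Phi(n)=\emptyset$ for $n>0$; this $t$-structure is intermediate, compactly generated and restrictable. Your recipe gives $d(0)=0$, $d(p\Zbb)=1$, hence $C=\Qbb\oplus\bigl(\prod_p E(\Zbb/p)\bigr)[-1]$. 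Since $\prod_p E(\Zbb/p)$ is an injective cogenerator and $C$ is a complex of injectives with zero differentials, one computes directly that $\Perp{>0}C=\D^{>0}$, the coaisle of the \emph{standard} $t$-structure; but the coaisle of the HRS tilt is $\{Y\in\D^{\geq 0}\mid H^0(Y)\text{ torsion-free}\}$, which contains $\Zbb$. The structural reason is that for any product of shifted indecomposable injectives, membership of $Y$ in $\Perp{>0}C$ unwinds to the support conditions $\Supp H^n(Y)\cap\Phi(n)=\emptyset$ for all $n$, whereas the true coaisle $\Ucal_\Phi^{\perp_0}$ is cut out by the vanishing of the torsion (local cohomology) functors attached to the $\Phi(n)$ --- a strictly weaker condition (over $\Zbb$: ``$H^0(Y)$ has no torsion elements'' versus ``$H^0(Y)$ is supported away from all maximal ideals''). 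So no object of the proposed form can induce $\Tbb_\Phi$ outside of degenerate cases.

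The correct cosilting object must instead be assembled from Matlis-dual/completed objects --- in the example above, $\Qbb\oplus\prod_p J_p$ with $J_p$ the $p$-adic integers, i.e.\ images of the $E(R/\pp)$ under right adjoints to the localizations encoded by $\Phi$ --- and already the \emph{cosilting} half of the statement is a genuine theorem of \cite{HN} relying on pure-injectivity of definable coaisles; it uses intermediacy and compact generation only, not restrictability. The entire content of the quoted result is the upgrade from cosilting to \emph{cotilting}, and your treatment of that step also misfires: since every term of $C$ is injective, the groups $\Ext^j_R(E(R/\pp),E(R/\qq))$ vanish for all $j>0$ regardless of any Cousin condition, and the only nontrivial instance of your displayed vanishing is a $\Hom$-vanishing equivalent to the monotonicity of $d$, which holds for every sp-filtration. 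Moreover $\Prod(C)\subseteq\Perp{<0}C$ cannot be checked prime by prime in the first variable, as $\Hom$ out of an infinite product of injectives does not decompose. In short, both the construction of the cosilting object and the mechanism by which the weak Cousin condition forces the cotilting property are missing, and these are precisely the substance of \cite{PV2}.
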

\subsection{Cotilting t-structures with a locally coherent heart}
Recall from \cref{theorem:cotilting-real} that if $R$ is a commutative
noetherian ring and $\Tbb$ is an intermediate cotilting $t$-structure, then
$\Tbb$ is compactly generated and $\Hcal$ is a locally finitely presentable
Grothendieck category by \cite[Theorem 1.6]{SS20}. As mentioned,
we are mostly interested in the case when $\Hcal$ is in addition
locally coherent: therefore, in this section we consider the following setting.

\begin{setting}\label{setting:cotilting}
	Let $R$ be a commutative noetherian ring. Let $\Tbb_C$ be a
	$t$-structure, with heart $\Hcal_C$, such that:
	\begin{enumerate}[label=(C\arabic*)]
		\item\label{item:1-cotilt} $\Tbb_C$ is the cotilting
			$t$-structure associated to a cotilting object $C$.
		\item\label{item:1-bounded} $\Tbb_C$ is intermediate.
		\item\label{item:1-restrict} $\Hcal_C$ is a locally coherent
			Grothendieck category.
	\end{enumerate}
\end{setting}

Condition \ref{item:1-bounded} is equivalent to the requirement that
$C\in\K^b(\inj R)$, which is sometimes included in the definition of a cotilting
object. The fact that $C$ is cotilting provides us with a triangle equivalence
\[\real_C\colon \D(\Hcal_C)\to \D(\Mod{R})\]
which restricts to the level of bounded derived categories and which lifts to an
equivalence between the standard derivators, see \cref{sec:silting}. In
particular, it will ensure that $\D(\Hcal_C)\simeq\D(\Mod R)$ is compactly
generated.

The main goal of this subsection is to characterize \cref{setting:cotilting} using
the restrictability of the $t$-structure $\Tbb_C$. To do that, we first need to
better understand the compact objects in the bounded derived category of
$\Hcal_C$. Recall from \cref{rem:inclusion-bounded-compact} that we have an
inclusion $\D^b(\fp{\Hcal_C}) \subseteq \D^b(\Hcal_C)^c$. We will use the
derived equivalence to $\Mod R$ to show that this inclusion is an equality.

\begin{lemma}\label{lemma:fpinf-der-eq}
	Let $\Gcal$ and $\Ecal$ be Grothendieck categories and $\eta:
	\mathfrak{D}_\Gcal \rightarrow \mathfrak{D}_{\Ecal}$ a bounded equivalence
	of derivators. Then an object $X \in \D^b(\Gcal)$ is of type $\FPinf$ if
	and only if $\eta^\star(X)$ is of type $\FPinf$ in $\D^b(\Ecal)$.
\end{lemma}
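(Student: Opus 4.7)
The plan is to bypass the definition via direct systems in $\Gcal$ and $\Ecal$ and work instead with the internal characterization of \cref{lemma:hcompact-1}: an object $X\in\D^b(\Gcal)$ is of type $\FPinf$ iff for every directed small category $I$ and every coherent diagram $\Ycal\in\D^b(\Gcal^I)$, the canonical map
\[\varinjlim_{i\in I}\Hom_{\D^b(\Gcal)}(X,\Ycal_i)\longrightarrow \Hom_{\D^b(\Gcal)}(X,\hocolim_I\Ycal)\]
is an isomorphism. The point is that this characterization is phrased entirely in terms of data that a bounded equivalence of derivators is required to preserve: restrictions along $\iota_i\colon \star\to I$ and homotopy colimits along $\pi_I\colon I\to\star$.

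For the forward direction, assume $X$ is of type $\FPinf$ and pick a directed $I$ and a bounded coherent diagram $\Zcal\in\D^b(\Ecal^I)$. Since $\eta$ is a bounded equivalence of derivators, $\eta^I\colon \D^b(\Gcal^I)\to\D^b(\Ecal^I)$ is a triangle equivalence, so there exists $\Ycal\in\D^b(\Gcal^I)$ with $\eta^I(\Ycal)\cong\Zcal$. Applying the commutative square \eqref{square-derivator} to $u=\iota_i$ gives $\Zcal_i\cong \eta^\star(\Ycal_i)$ naturally in $i$, and the square \eqref{square-derivator-hocolim} gives $\hocolim_I\Zcal\cong \eta^\star(\hocolim_I\Ycal)$. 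Using that $\eta^\star$ is fully faithful and that $X$ is of type $\FPinf$, we then obtain the chain of natural isomorphisms
\[
\varinjlim_{i\in I}\Hom_{\D^b(\Ecal)}(\eta^\star X,\Zcal_i)\cong \varinjlim_{i\in I}\Hom_{\D^b(\Gcal)}(X,\Ycal_i)\cong \Hom_{\D^b(\Gcal)}(X,\hocolim_I\Ycal)\cong \Hom_{\D^b(\Ecal)}(\eta^\star X,\hocolim_I\Zcal),
\]
and one concludes by \cref{lemma:hcompact-1} that $\eta^\star X$ is of type $\FPinf$.

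The converse direction is symmetric, applied to a quasi-inverse bounded equivalence of derivators $\eta^{-1}\colon \mathfrak{D}_\Ecal\to\mathfrak{D}_\Gcal$, whose existence is guaranteed by \cite[Proposition~2.11]{MG} together with the boundedness hypothesis. The main point to be careful about is that the composite isomorphism above is indeed induced by the canonical comparison map appearing in \cref{lemma:hcompact-1}; this is a naturality check which follows from the construction of that map purely in terms of the counit of the $(\hocolim_I,\mathfrak{D}(\pi_I))$ adjunction, a piece of structure which any equivalence of derivators respects. No further ingredient is needed beyond the two commutative squares built into the definition of a derivator equivalence.
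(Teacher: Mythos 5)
Your proof is correct and takes essentially the same route as the paper's: both reduce to the characterization of \cref{lemma:hcompact-1} and use the compatibility squares \eqref{square-derivator} and \eqref{square-derivator-hocolim} together with the boundedness of $\eta$ to identify the comparison map for $X$ with that for $\eta^\star X$. The only cosmetic difference is that you lift diagrams along the essentially surjective $\eta^I$ (resp.\ invoke a quasi-inverse for the converse), where the paper phrases the same argument as a two-out-of-three deduction in a single commutative square.
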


\begin{proof}
	Let $I$ be a directed small category and $\Ycal \in \D^b(\Gcal^I)$. Then
	there is the following commutative square induced by application of the
	equivalence $\eta$ between derivators, where all of the maps are the
	naturally induced ones:
	\[\begin{tikzcd}[column sep=1pc]
		\varinjlim_{i \in I}\Hom_{\D(\Gcal)}(X,\Ycal_i)
			\arrow{d} \arrow{r}{\cong} &
		\varinjlim_{i \in I}\Hom_{\D(\Ecal)}(\eta^\star X,(\eta^I \Ycal)_i)
			\arrow{d}\\
		\Hom_{\D(\Gcal)}(X,\hocolim_I \Ycal) \arrow{r}{\cong} &
		\Hom_{\D(\Ecal)}(\eta^\star X,\hocolim_I (\eta^I \Ycal))
	\end{tikzcd}\]
	Note that both the horizontal isomorphisms are induced by the triangle
	equivalence $\eta^\star$. Indeed, this follows from the two canonical
	isomorphisms induced by the derivator equivalence $\eta$:
	\[\hocolim_I (\eta^I \Ycal) \cong \eta^\star(\hocolim_I \Ycal) \text{  and
		} (\eta^I \Ycal)_i \cong \eta^\star(\Ycal_i),\]
	see \cref{square-derivator-hocolim} and \cref{square-derivator}. Since the
	equivalence $\eta$ is bounded, $\eta^I \Ycal \in \D^b(\Ecal^I)$. Therefore,
	if $\eta^\star$ is of type $\FPinf$ then the right vertical map is an
	isomorphism by \cref{lemma:hcompact-1}. Then the square implies that the
	left vertical map is an isomorphism for any choice of $\Ycal \in
	\D^b(\Gcal^I)$, and so $X$ is of type $\FPinf$. The converse implication
	follows similarly using the fact that $\eta^\star$ and $\eta^I$ are
	equivalences between the bounded derived categories.
\end{proof}

\begin{lemma}\label{lemma:H2}
	In \cref{setting:cotilting}, we have $\D^b(\Hcal_C)^c=\D^b(\fp{\Hcal_C})$.
	In particular, the derived equivalence $\real_C$ restricts to a triangle equivalence
	$\D^b(\fp{\Hcal_C}) \rightarrow \D^b(\mod R)$.
\end{lemma}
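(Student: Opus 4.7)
My plan is to reduce the reverse inclusion to a concrete statement about $\Mod R$ via the bounded derivator equivalence $\mathfrak{real}_C\colon \mathfrak{D}_{\Hcal_C} \rightarrow \mathfrak{D}_{\Mod R}$ supplied by \cref{T:Vi}, and then to prove this auxiliary statement by a direct induction. The inclusion $\D^b(\fp{\Hcal_C}) \subseteq \D^b(\Hcal_C)^c$ is already in \cref{rem:inclusion-bounded-compact}, so I focus on the converse. Given $X \in \D^b(\Hcal_C)^c$, since $\real_C$ restricts to a triangle equivalence $\D^b(\Hcal_C) \toeq \D^b(\Mod R)$, the object $Y := \real_C(X)$ lies in $\D^b(\Mod R)^c$; the goal is to show $Y \in \D^b(\mod R)$.

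The core step is to establish the auxiliary identity $\D^b(\Mod R)^c = \D^b(\mod R)$. The inclusion $\supseteq$ is \cref{rem:inclusion-bounded-compact} applied to $\Mod R$, which is locally coherent because $R$ is noetherian. For the converse I argue by induction on the cohomological length of $Y$. Let $n$ be the maximal integer with $H^n(Y) \neq 0$; soft truncation yields a natural isomorphism $\Hom_{\D^b(\Mod R)}(Y, M[-n]) \cong \Hom_R(H^n(Y), M)$ for every $M \in \Mod R$. I write $H^n(Y) = \varinjlim_\alpha N_\alpha$ as the directed colimit of its finitely generated submodules; since directed colimits in $\Mod R$ are exact, this colimit coincides with the corresponding homotopy colimit inside $\D^b(\Mod R)$ and is realised there by a mapping telescope triangle between two coproducts of shifts of the $N_\alpha$. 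Compactness of $Y$ with respect to uniformly bounded coproducts together with the long exact sequence for $\Hom_{\D^b(\Mod R)}(Y,-)$ then yields
\[ \Hom_R(H^n(Y), H^n(Y)) \cong \varinjlim_\alpha \Hom_R(H^n(Y), N_\alpha). \]
The identity map must factor through some inclusion $N_\alpha \hookrightarrow H^n(Y)$, forcing $H^n(Y)$ to be a retract of $N_\alpha$ and hence finitely generated. The triangle $\tau^{<n}Y \to Y \to H^n(Y)[-n] \to \tau^{<n}Y[1]$, combined with the already established inclusion $\D^b(\mod R) \subseteq \D^b(\Mod R)^c$, shows $\tau^{<n}Y$ is again compact, and the induction concludes.

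With the identity in hand, the main statement follows quickly: $\real_C(X) \in \D^b(\mod R)$ is of type $\FPinf$ in $\D^b(\Mod R)$ by \cref{lemma:fp-equal-hcompact}; \cref{lemma:fpinf-der-eq}, applied to the bounded derivator equivalence $\mathfrak{real}_C$, transfers this property back to $X$, and a second application of \cref{lemma:fp-equal-hcompact} now inside the locally coherent category $\Hcal_C$ gives $X \in \D^b(\fp{\Hcal_C})$. The ``in particular'' clause is then automatic because $\real_C$ is a triangle equivalence and thus preserves compact objects in both directions. The main obstacle is the passage from compactness of $Y$ in $\D^b(\Mod R)$---which a priori is only with respect to those coproducts that exist at the bounded level---to genuine finite generation of the top cohomology; the resolution is to realise $H^n(Y)$ as a filtered homotopy colimit of its finitely generated submodules already inside $\D^b(\Mod R)$ via the mapping telescope, so that the long-exact-sequence argument automatically upgrades coproduct compactness to commutation with this hocolim, from which the retract trick extracts finite generation.
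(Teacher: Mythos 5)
Your overall strategy matches the paper's almost exactly: restrict the bounded derivator equivalence $\mathfrak{real}_C$ to compacts, reduce to the identity $\D^b(\Mod R)^c = \D^b(\mod R)$, then transfer the type-$\FPinf$ property across the equivalence via \cref{lemma:fpinf-der-eq} and \cref{lemma:fp-equal-hcompact}. The paper simply cites Rouquier (Corollary~6.17 of the reference \cite{Rou}) for the identity $\D^b(\Mod R)^c = \D^b(\mod R)$, whereas you attempt a self-contained proof by induction on cohomological length, and this subproof has a genuine gap.

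The issue lies in the claim that the directed colimit $H^n(Y)=\varinjlim_\alpha N_\alpha$ over the full poset of finitely generated submodules ``is realised by a mapping telescope triangle between two coproducts.'' The Milnor telescope cofiber sequence $\bigoplus N_i \to \bigoplus N_i \to \hocolim N_i$ exists for \emph{sequential} diagrams, not for colimits over an arbitrary directed poset: for a general directed poset $I$, the canonical map $\bigoplus_{\alpha<\beta} N_\alpha \to \bigoplus_\alpha N_\alpha$ is not a monomorphism, so one does not obtain a three-term triangle in $\D^b(\Mod R)$ and hence cannot run the long exact sequence the way you describe. One also cannot reduce to a sequential (or even well-ordered) cofinal chain: if $H^n(Y)$ is not countably generated, the poset of its f.g.\ submodules has no cofinal chain, since the union of a chain of f.g.\ submodules is at most countably generated. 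So the deduction $\Hom_R(H^n(Y),H^n(Y))\cong \varinjlim_\alpha \Hom_R(H^n(Y),N_\alpha)$, on which the retract argument rests, is not justified by the mechanism you invoke. What compactness of $Y$ does immediately give (applying $\Hom_{\D^b}(Y,-)$ to a coproduct of degree-$n$ stalks and using the soft truncation isomorphism) is that $H^n(Y)$ is a \emph{small} module, i.e.\ $\Hom_R(H^n(Y),-)$ preserves coproducts. The passage from ``small'' to ``finitely generated'' is true over any ring but is itself a nontrivial classical fact (Rentschler), not an immediate consequence of a single telescope. The simplest fix is to cite Rouquier's Corollary~6.17 as the paper does; alternatively you could cite the small-implies-finitely-generated lemma, but the argument as written does not establish it.
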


\begin{proof}
	Recall from \cref{T:Vi} that the cotilting $t$-structure $\Tbb$ induces a
	bounded equivalence $\mathfrak{real}_C: \D_{\Hcal_C} \rightarrow
	\D_{\Mod{R}}$ of derivators. In particular, we have a triangle equivalence
	$\D^b(\Hcal_C) \toeq \D^b(\Mod R)$ obtained by restriction of
	$\mathfrak{real}_C^\star: \D(\Hcal_C) \toeq \D(\Mod R)$. Then
	$\mathfrak{real}_C^\star$ further restricts to an equivalence
	$\D^b(\Hcal_C)^c \toeq \D^b(\Mod R)^c$ between the categories of compact
	objects. Since $R$ is noetherian, $\D^b(\Mod R)^c = \D^b(\mod R)$ by
	\cite[Corollary~6.17]{Rou}, and $\D^b(\mod R)$ is also precisely the
	subcategory of $\D^b(\Mod{R})$ consisting of objects of type $\FPinf$, see
	\cref{lemma:fp-equal-hcompact}. Then \cref{lemma:fpinf-der-eq} applies and
	shows that $\D^b(\Hcal_C)^c$ coincides with the subcategory of all objects
	of type $\FPinf$ of $\D^b(\Hcal_C)$. But by \cref{lemma:fp-equal-hcompact}
	this is precisely the subcategory $\D^b(\fp{\Hcal_C})$. 

	Finally, note that we proved the second statement along the way, since
	$\real_C = \mathfrak{real}_C^\star$.
\end{proof}

\begin{cor}\label{cor:sing-eq}
	In \cref{setting:cotilting}, the functor $\real_C$ induces a triangle
	equivalence $\D^\sing(\Hcal_C) \to \D^\sing(\Mod R)$ between singularity
	categories.
\end{cor}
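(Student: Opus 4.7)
The plan is to deduce the equivalence formally from the universal property of the Verdier quotient, using only the equivalence $\real_C\colon \D(\Hcal_C)\toeq \D(\Mod R)$ produced by \cref{T:Vi} together with \cref{lemma:H2} and \cref{prop:compacts-are-bounded}. Recall that by definition
\[
\D^\sing(\Hcal_C)=\D^b(\fp{\Hcal_C})/\D(\Hcal_C)^c,\qquad \D^\sing(\Mod R)=\D^b(\mod R)/\D(\Mod R)^c,
\]
where by \cref{prop:compacts-are-bounded} the subcategories $\D(\Hcal_C)^c$ and $\D(\Mod R)^c$ are thick subcategories of $\D^b(\fp{\Hcal_C})$ and $\D^b(\mod R)$ respectively. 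So it suffices to exhibit a triangle equivalence $\D^b(\fp{\Hcal_C})\toeq \D^b(\mod R)$ that matches the two pairs of thick subcategories.

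First I would invoke \cref{lemma:H2}, which gives precisely that $\real_C$ restricts to a triangle equivalence $\D^b(\fp{\Hcal_C})\toeq \D^b(\mod R)$. It remains to check that under this equivalence $\D(\Hcal_C)^c$ corresponds to $\D(\Mod R)^c$. For this the key point is that $\real_C=\mathfrak{real}_C^{\star}$ is the base component of the derivator equivalence $\mathfrak{real}_C\colon \mathfrak{D}_{\Hcal_C}\toeq \mathfrak{D}_{\Mod R}$ of \cref{T:Vi}. In particular, $\real_C$ commutes with homotopy colimits, hence with arbitrary coproducts, and the same holds for its quasi-inverse; thus $\real_C$ restricts to a triangle equivalence $\D(\Hcal_C)^c\toeq \D(\Mod R)^c$.

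Combining these two restrictions, $\real_C$ is a triangle equivalence between $\D^b(\fp{\Hcal_C})$ and $\D^b(\mod R)$ that identifies the thick subcategories $\D(\Hcal_C)^c$ and $\D(\Mod R)^c$. By the universal property of the Verdier quotient, it induces a triangle functor $\D^\sing(\Hcal_C)\to \D^\sing(\Mod R)$ which is essentially surjective (since the two quotient functors are and $\real_C$ is essentially surjective) and fully faithful (via the calculus of fractions, using that roofs in the source correspond bijectively to roofs in the target under $\real_C$).

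No substantial obstacle is expected: everything is prepared by the previous two subsections, and the only non-routine step is the identification of compact objects on the two sides, which is guaranteed by the fact that the equivalence $\real_C$ comes from an equivalence of derivators rather than merely a triangle equivalence.
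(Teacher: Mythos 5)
Your proposal is correct and follows essentially the same route as the paper: restrict $\real_C$ to an equivalence $\D^b(\fp{\Hcal_C})\toeq\D^b(\mod R)$ via \cref{lemma:H2}, observe that it also identifies $\D(\Hcal_C)^c$ with $\D(\Mod R)^c$, and pass to Verdier quotients. (A minor remark: the derivator enhancement is not needed for the compactness step, since any triangle equivalence preserves coproducts and hence compact objects.)
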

\begin{proof}
	By \cref{lemma:H2}, the derived equivalence $\real_C: \D(\Hcal_C) \to \D(\Mod
	R)$ restricts to an equivalence $\D^b(\fp{\Hcal_C}) \rightarrow \D^b(\mod
	R)$. Since $\real_C$ also restricts to an equivalence $\D(\Hcal_C)^c \to
	\D(\Mod R)^c$ between the subcategories of compact objects, the result
	follows formally by passing to Verdier quotients.
\end{proof}

Now we are ready to formulate the main result of this section, that is, to
characterize the case in which the heart $\Hcal_C$ is a locally coherent
category. Our result can be seen as a refinement of the restrictability
characterization due to Marks and Zvonareva \cite[Corollary 4.2]{MZ} in the
special case of intermediate compactly generated $t$-structures in
$\D(\Mod{R})$.

\begin{thm}\label{thm:cotilting-side}
	Let $R$ be a commutative noetherian ring and $\Tbb$ be an intermediate
	compactly generated $t$-structure in $\D(\Mod R)$, with heart $\Hcal$. Then
	the following are equivalent:
	\begin{enumerate}[label=(\roman*)]
		\item[(i)] we are in \cref{setting:cotilting}, that is, $\real^b_\Tbb$
			is an equivalence and $\Hcal$ is locally coherent;
		\item[(ii)] the $t$-structure $\Tbb$ restricts to $\D^b(\mod R)$.
	\end{enumerate}
\end{thm}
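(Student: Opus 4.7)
The plan is to prove both implications, the first being essentially routine. For $(i) \Rightarrow (ii)$, \cref{setting:cotilting} provides the bounded derived equivalence $\real^b_\Tbb \colon \D^b(\Hcal) \toeq \D^b(\Mod R)$ under which the standard $t$-structure on $\D^b(\Hcal)$ corresponds to $\Tbb$. By \cref{lemma:H2} this equivalence restricts to $\D^b(\fp \Hcal) \toeq \D^b(\mod R)$; since $\fp \Hcal$ is abelian (as $\Hcal$ is locally coherent), the standard $t$-structure of $\D^b(\Hcal)$ restricts to $\D^b(\fp \Hcal)$, and transporting this restriction via $\real^b_\Tbb$ yields the required restriction of $\Tbb$ to $\D^b(\mod R)$.

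For $(ii) \Rightarrow (i)$, I first invoke \cite[Corollary~6.18]{PV2} to conclude that $\Tbb$ is cotilting, so (C1) and (C2) are automatic; \cref{theorem:cotilting-real} then gives that $\real^b_\Tbb$ is an equivalence and $\Hcal$ is locally finitely presented Grothendieck. Only (C3), the fact that $\fp \Hcal$ is abelian, remains. I propose to identify $\fp \Hcal$ with
\[\Scal := \Hcal \cap \D^b(\Hcal)^c,\]
which under $\real^b_\Tbb$ corresponds to $\Hcal \cap \D^b(\mod R)$ via the identification $\D^b(\Mod R)^c = \D^b(\mod R)$. The restrictability hypothesis on $\Tbb$ transports via $\real^b_\Tbb$ into the statement that the standard $t$-structure on $\D^b(\Hcal)$ restricts to $\D^b(\Hcal)^c$, which in particular makes $\Scal$ an abelian subcategory of $\Hcal$ with exact inclusion. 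Combining \cref{lemma:fpinf-der-eq} (the bounded derivator equivalence $\mathfrak{real}_C$ preserves the $\FPinf$ property) with \cref{lemma:fp-equal-hcompact} applied to $\Mod R$, an object $X \in \D^b(\Hcal)$ is of type $\FPinf$ if and only if $X \in \D^b(\Hcal)^c$; so every object of $\Scal$ is $\FPinf$ in $\D^b(\Hcal)$ and hence finitely presented in $\Hcal$ by the $n=0$ instance of \cref{def:fpinf}. This yields the easy inclusion $\Scal \subseteq \fp \Hcal$.

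The main obstacle is the reverse inclusion $\fp \Hcal \subseteq \Scal$, which I plan to handle via a filtered colimit argument. Any $X \in \D^b(\Mod R)$ is a filtered colimit in $\D(\Mod R)$ of objects of $\D^b(\mod R)$: write $X$ as a bounded complex and exhaust each term by finitely generated submodules assembling into subcomplexes. Since $\mathfrak{real}_C$ is a bounded derivator equivalence and so preserves directed homotopy colimits, every $M \in \Hcal \subseteq \D^b(\Hcal)$ can be written as a filtered homotopy colimit $M \cong \hocolim_I K_i$ in $\D(\Hcal)$ with $K_i \in \D^b(\Hcal)^c$. Applying the standard $H^0$ and using the exactness of filtered colimits in the Grothendieck category $\Hcal$ together with the restrictability consequence above, one obtains $M = \varinjlim_i H^0(K_i)$ in $\Hcal$ with each $H^0(K_i) \in \Scal$. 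Then for $F \in \fp \Hcal$, the finite presentation property forces $\mathrm{id}_F$ to factor through some $H^0(K_i) \in \Scal$, exhibiting $F$ as a retract of an object of $\Scal$; closure of $\Scal$ under retracts gives $F \in \Scal$. The delicate point is ensuring that the filtered homotopy colimit obtained in $\D(\Hcal)$ really computes an honest filtered colimit in $\Hcal$ after taking $H^0$, which comes down to the exactness of filtered colimits in $\Hcal$ and the preservation of directed Kan extensions by the derivator equivalence.
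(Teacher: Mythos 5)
Your proof is correct, and for the implication $(i)\Rightarrow(ii)$ as well as for the cotilting part of $(ii)\Rightarrow(i)$ it coincides with the paper's argument (the paper invokes \cite[Corollary~6.17]{PV2} where you cite Corollary~6.18; these play the same role). The genuine divergence is in how local coherence of $\Hcal$ is obtained: the paper simply outsources this to \cite[Theorem~6.3]{Saorin}, whereas you give a self-contained argument identifying $\fp\Hcal$ with $\Scal = \Hcal\cap\D^b(\Hcal)^c$, the heart of the transported restricted $t$-structure. Your route is internally consistent and carefully avoids the circularity trap: you do not apply \cref{lemma:fp-equal-hcompact} to $\Hcal$ (which would presuppose local coherence) but only to $\Mod R$, transporting the $\FPinf$ property through the bounded derivator equivalence of \cref{T:Vi} --- which is available once (C1) and (C2) are secured from \cite{PV2} --- and you correctly note that local finite presentability of $\Hcal$ must be imported from \cite{SS20} via \cref{theorem:cotilting-real} before the retract argument for $\fp\Hcal\subseteq\Scal$ can run. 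The exhaustion of a bounded complex over the noetherian ring $R$ by subcomplexes with finitely generated terms, viewed as a coherent diagram whose directed homotopy colimit is the honest colimit, and the commutation of $H^0$ with that colimit by exactness, are all legitimate uses of the derivator formalism set up in the paper. What your approach buys is independence from Saorín's theorem at the cost of leaning harder on the derivator enhancement; what the paper's citation buys is brevity. Either way the theorem stands, so this is a valid alternative proof rather than a gap.
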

\begin{proof}

	Recall that $\real^b_\Tbb$ being an equivalence amounts to $\Tbb$ being
	induced by a cotilting object $C$ by \cref{theorem:cotilting-real}, and
	therefore the description in $(i)$ indeed corresponds to
	\cref{setting:cotilting}.

	The two claims of the implication $(ii) \Rightarrow (i)$ are proven in \cite[Corollary~6.17]{PV2} and \cite[Theorem~6.3]{Saorin}, respectively.

	It remains to show $(i) \Rightarrow (ii)$. Assume now that $\Hcal$ is
	locally coherent. To establish that $\Tbb$ is restrictable, we just need to
	recall from \cref{lemma:H2} that the derived equivalence $\real_C: \D(\Hcal)
	\toeq \D(\Mod{R})$ restricts to an equivalence $\D^b(\fp \Hcal) \toeq
	\D^b(\mod R)$. The $t$-structure $\Tbb$ corresponds under $\real$ to the
	standard $t$-structure on $\D(\Hcal)$, which clearly restricts to a
	$t$-structure in $\D^b(\fp \Hcal)$.
\end{proof}
\cref{thm:cotilting-side} admits the following reformulation in terms of cosilting objects.
\begin{cor}\label{cor:cotilt-coherent}
	Let $R$ be a commutative noetherian ring and $C \in \D(\Mod R)$ a bounded cosilting object with the induced cosilting t-structure $\Tbb_C = (\Perp{\leq 0}C,\Perp{>0}C)$ and heart $\Hcal_C$. The following are equivalent:
	\begin{enumerate}[label=(\roman*)]
		\item[(i)] $C$ is cotilting and $\Hcal_C$ is locally coherent,
		\item[(ii)] $\Tbb_C$ is restrictable.
	\end{enumerate}
\end{cor}

\begin{ex}[Cotilting heart that is not locally coherent]
	Let $(R,\mm)$ be a local Cohen-Macaulay ring of Krull dimension at least 2. Consider an sp-filtration $\Phi$ given as follows:
	$$\Phi(n) = \begin{cases} \Spec R & \text{for } n<0; \\ \{\mm\} & \text{for } n=0,1; \\ \emptyset & \text{for } n>1. \end{cases}$$
	Since the grade of the ideal $\mm$ is at least 2 by the assumption, this filtration corresponds to a bounded cosilting complex $C$ whose cohomology is concentrated in degree zero (so that $C$ is isomorphic in $\D(\Mod R)$ to a \newterm{cotilting module}), see \cite[Theorem 4.2]{APST14} and the discussion in \cite[Remark 5.10]{HNS}. In particular, $C$ is a cotilting object. On the other hand, the sp-filtration clearly does not satisfy the weak Cousin condition. By \cref{weak-Cousin}, the induced cotilting t-structure is not restrictable and so the cotilting heart is not locally coherent by \cref{cor:cotilt-coherent}.
\end{ex}

\begin{ex}[Locally coherent cosilting heart that is not cotilting]
	As in \cite[Example~6.19]{PV2}, assume $R$ has Krull dimension $1$, and
	consider the sp-filtration $\Phi$ given by:
	\[ \Phi(n) =
	\begin{cases}
		\Spec R&\text{for }n<0;\\
		\mathsf{Max} (R)&\text{for }n=0,1;\\
		\emptyset &\text{for }n>1.
	\end{cases}
	\]
	Then the associated heart is $\Hcal_\Phi=\Ccal[1]\oplus \Tcal[-1]$, where
	$\Tcal\subseteq\Mod R$ is the hereditary torsion class associated to
	$\mathsf{Max}(R)$ and $\Ccal\subseteq\Mod R$ is the corresponding Giraud
	subcategory, $\Ccal=\Tcal^{\bot_{0,1}}$. $\Hcal_\phi$ is the product of two locally noetherian
	Grothendieck category, so it is locally noetherian (in particular, locally
	coherent). On the other hand, the $t$-structure does not restrict (because
	$\Phi$ does not satisfy the weak Cousin condition), and therefore it is not
	cotilting.
\end{ex}

\begin{cor}\label{cor:restr-recollement}
	Let $R$ be a commutative noetherian ring, and $\Tbb$ an intermediate,
	compactly generated, restrictable $t$-structure, with heart $\Hcal$. Then
	Krause's recollement exists for $\Hcal$.
\end{cor}

\begin{proof}
	By \cref{thm:cotilting-side}, $\Tbb$ fits in \cref{setting:cotilting}, and
	so $\Hcal$ satisfies the hypothesis of \cref{theorem:recollement}.
\end{proof}

As another application of \cref{lemma:H2}, we can show that the two versions of
coderived categories of $\Hcal_C$ due to Becker and Positselski coincide. Recall
that an object $M \in \Hcal_C$ is \newterm{fp-injective} if
$\Ext_{\Hcal_C}^1(F,M) = 0$ for all $F \in \fp {\Hcal_C}$. Furthermore, $M \in
\Hcal_C$ is of \newterm{finite fp-injective dimension} if $M$ is isomorphic in
$\D(\Hcal_C)$ to a bounded complex of fp-injective objects concentrated in
non-negative degrees.

\begin{lemma}\label{lemma:fpinjfiniteinj}
	In \cref{setting:cotilting}, any object in $\Hcal_C$ of finite fp-injective
	dimension is of finite injective dimension.
\end{lemma}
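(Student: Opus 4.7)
The plan is to transport the hypothesis on $M$ through the bounded derived equivalence $\real_C\colon \D^b(\Hcal_C) \toeq \D^b(\Mod R)$ of \cref{lemma:H2}, exploit the noetherianity of $R$ (which ensures that fp-injective equals injective in $\Mod R$), and then pull the conclusion back.

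The first step rephrases the hypothesis. Finite fp-injective dimension of $M$, say at most $n$, amounts to $\Hom_{\D(\Hcal_C)}(F, M[i]) = 0$ for every $F \in \fp \Hcal_C$ and $i > n$. By \cref{lemma:H2}, $\real_C$ restricts to an equivalence $\D^b(\fp \Hcal_C) \toeq \D^b(\mod R)$. Since $\Tbb_C$ is intermediate, the cohomological amplitude of $\real_C^{-1}(F')$ as an object of $\D^b(\fp \Hcal_C)$ is uniformly bounded for $F' \in \mod R$, say by some constant $q$. A standard d\'evissage along the standard truncation triangles of $\real_C^{-1}(F')$ in $\D(\Hcal_C)$ then transfers the vanishing to $\Hom_{\D(\Mod R)}(F', \real_C(M)[i]) = 0$ for all $F' \in \mod R$ and all $i > n + q$.

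The next step upgrades this to finite injective dimension of the complex $\real_C(M) \in \D^b(\Mod R)$. Writing $\real_C(M) \in \D^{[a,b]}(\Mod R)$ and using the top truncation triangle $\tau^{\leq b-1}(\real_C M) \to \real_C(M) \to H^b(\real_C M)[-b]$, together with the basic orthogonality $\Hom_{\D(\Mod R)}(F', Y[j]) = 0$ whenever $F'$ is a module and $Y \in \D^{\leq -1}$ with $j \geq 0$, the long exact sequence identifies $\Hom_{\D(\Mod R)}(F', \real_C(M)[j]) \cong \Ext^{j-b}_R(F', H^b(\real_C M))$ for $j \geq b$. Hence $H^b(\real_C M)$ has finite fp-injective dimension in $\Mod R$, and by the noetherianity of $R$, finite injective dimension as well. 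An amplitude induction then propagates the conclusion downward to each $H^j(\real_C M)$, and reassembling along the truncation triangles shows that $\real_C(M)$ is quasi-isomorphic to a bounded complex of injective $R$-modules.

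Finally, pick any $H \in \Hcal_C$. By the intermediacy of $\Tbb_C$, $\real_C(H)$ lies in $\D^{[l,m]}(\Mod R)$ for fixed integers $l, m$ independent of $H$. Since $\real_C(M)$ is quasi-isomorphic to a bounded complex of injective $R$-modules and $\real_C(H)$ has uniformly bounded cohomology, there is a uniform integer $N$ with $\Hom_{\D(\Mod R)}(\real_C(H), \real_C(M)[i]) = 0$ for all $i > N$. Transporting back through $\real_C$ yields $\Hom_{\D(\Hcal_C)}(H, M[i]) = 0$ for every $H \in \Hcal_C$ and $i > N$, which is precisely finite injective dimension of $M$ in $\Hcal_C$. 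The main obstacle is the amplitude induction of the middle paragraph: the constant governing the vanishing bound must be carefully propagated from the complex $\real_C(M)$ to each of its cohomologies and then back during the reassembly. Both the noetherian hypothesis on $R$ and the uniform amplitude bounds afforded by the intermediate property of $\Tbb_C$ are essential for this.
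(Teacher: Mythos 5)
Your proof is correct and follows essentially the same route as the paper: transport the $\Ext$-vanishing from $\fp\Hcal_C$ through the restricted equivalence $\D^b(\fp\Hcal_C)\toeq\D^b(\mod R)$ of \cref{lemma:H2}, use intermediacy to get uniform amplitude bounds on both transfers, invoke noetherianity of $R$ in the middle, and transport back to $\Hcal_C$. The only difference is a matter of economy: where you carry out an explicit amplitude induction to show that each cohomology $H^j(\real_C M)$ has finite injective dimension and then reassemble a bounded injective resolution, the paper simply invokes the standard fact that for a noetherian ring a bounded complex $X$ has finite injective dimension precisely when $\Hom_{\D(\Mod R)}(\mod R, X[i])=0$ for $i\gg 0$, and works directly with the orthogonality $\Hom_{\D(\Mod R)}(\D(\Mod R)^{\geq 0}, X[i])=0$ for $i\gg 0$ in the final step.
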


\begin{proof}
	Let $M \in \Hcal_C$, put $X = \real_C(M) \in \D^b(\Mod{R})$, and let us
	denote the converse equivalence to $\real_C$ as $\real_C^{-1}: \D(\Mod{R})
	\toeq \D(\Hcal_C)$. Since the $t$-structure $\Tbb$ is intermediate, and
	using \cref{lemma:H2}, there is an integer $n \in \Zbb$ such that
	$\real_C^{-1}(\mod{R}) \subseteq \D(\fp{\Hcal_C}) \cap \D^{\geq n}$.  If $M$
	is of finite fp-injective dimension then
	$\Hom_{\D(\Hcal_C)}(\D(\fp{\Hcal_C})^{\geq n},M[i]) = 0$ for all $i \gg 0$.
	Applying $\real_C$ we therefore obtain $\Hom_{\D(\Mod{R})}(\mod{R},X[i]) =
	0$ for all $i \gg 0$, which amounts to $X \in \D^b(\Mod{R})$ being of finite
	injective dimension in $\D(\Mod{R})$, since $R$ is noetherian. Equivalently,
	we have $\Hom_{\D(\Mod{R})}(\D(\Mod{R})^{\geq 0},X[i]) = 0$ for $i \gg 0$.
	But using the intermediacy of $\Tbb$ again, we know that $\real_C \Hcal_C[j]
	\subseteq \D(\Mod R)^{\geq 0}$ for $j \ll 0$, and so it follows by applying
	$\real^{-1}_C$ that $\Hom_{\D(\Hcal_C)}(\Hcal_C,M[i+j]) = 0$ for $i+j \gg
	0$, which in turn implies that $M$ is of finite injective dimension in
	$\Hcal_C$.
\end{proof}

\begin{cor}\label{cor:poscoderived}
	In \cref{setting:cotilting},  the coderived category $\K(\inj{\Hcal_C})$ (in
	Becker's sense) is equivalent to the coderived category in Positselski's
	sense.
\end{cor}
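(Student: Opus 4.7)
The plan is to show that Positselski's coderived category of $\Hcal_C$ agrees with Becker's, which by \cref{theorem:recollement} is equivalent to $\K(\inj{\Hcal_C})$. Since Positselski-coacyclic complexes are always Becker-coacyclic, there is a canonical comparison triangle functor between the two coderived categories, and the task is to show it is an equivalence.

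First, I would strengthen \cref{lemma:fpinjfiniteinj} to the following uniform statement: there exists an integer $D \geq 0$ such that every fp-injective object of $\Hcal_C$ has injective dimension at most $D$. An inspection of the proof of \cref{lemma:fpinjfiniteinj} reveals that the only contributions to the bound on the injective dimension of $M$ come from the additive shifts dictated by the intermediacy of $\Tbb_C$ and by the derived equivalence $\real_C$; these are independent of $M$, so fp-injective objects (whose fp-injective dimension is zero) automatically satisfy a uniform injective dimension bound.

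Second, I would invoke the comparison criterion (due to Positselski and Šťovíček; cf.~the discussion around \cite[Example 2.5(3)]{Pos20}) stating that if a locally coherent Grothendieck category has the property that its fp-injective objects have uniformly bounded injective dimension, then Becker's and Positselski's coderived categories coincide. Combined with Step~1 and \cref{theorem:recollement}, this yields the claimed equivalence.

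The main obstacle is Step 2, that is, isolating (or, if needed, proving directly) the comparison criterion in the precise form required. A direct argument would proceed as follows: any Becker-coacyclic complex $X^\bullet$ admits a fp-injective coresolution of uniform length at most $D$; then $X^\bullet$ is realized as an iterated extension of at most $D$ bounded acyclic complexes of fp-injective objects, each of which is Positselski-coacyclic as a totalization of a short exact sequence of complexes. Hence $X^\bullet$ is itself Positselski-coacyclic, and the two coderived categories coincide. The uniformity of $D$ is essential for the coresolution to stop in bounded length, so that the extension description in $\K(\Hcal_C)$ is available.
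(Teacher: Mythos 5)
Your primary, citation-based route is essentially the paper's own proof: the corollary is obtained by combining \cref{lemma:fpinjfiniteinj} with the theorem of \cite[\S3.7]{Pos11}, applied to the coresolving class of fp-injective objects of $\Hcal_C$ (which, by \v{S}\v{t}ov\'{\i}\v{c}ek's results in \cite[\S6]{St}, is the class through which the Becker coderived category of a locally coherent Grothendieck category is computed). Your Step~1 is a correct reading of the proof of \cref{lemma:fpinjfiniteinj}: for $M$ fp-injective one has $\Ext^{i}_{\Hcal_C}(F,M)=0$ for all $i\geq 1$ and $F\in\fp{\Hcal_C}$, so the ``$i\gg 0$'' thresholds in that proof depend only on the intermediacy constants of $\Tbb_C$ and not on $M$; hence fp-injective objects of $\Hcal_C$ do have uniformly bounded injective dimension.

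The ``direct argument'' you offer for Step~2, however, does not work as written. The integer $D$ bounds the injective dimension of \emph{fp-injective} objects; it says nothing about the length of an fp-injective coresolution of an arbitrary Becker-coacyclic complex $X^\bullet$. Indeed, whenever $\Mod{R}$ contains a module of infinite injective dimension, the derived equivalence together with \cref{lemma:fpinjfiniteinj} produces objects of $\Hcal_C$ of infinite fp-injective dimension, so the coresolution you invoke need not terminate after $D$ steps (or at all). Moreover, an iterated extension of finitely many \emph{bounded} acyclic complexes is itself bounded, so it cannot recover an unbounded $X^\bullet$. The uniform bound $D$ is used in the opposite direction: one first replaces an arbitrary complex, up to a Positselski-coacyclic cone, by a complex of fp-injectives (using that fp-injectives are coresolving and closed under coproducts and direct limits), and then uses $D$ to pass from complexes of fp-injectives to complexes of injectives; this is exactly what \cite[\S3.7, Theorem]{Pos11} packages. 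So either rely on the citation and drop the sketch, or rebuild the direct argument along these lines.
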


\begin{proof}
	This follows directly from \cite[\S3.7, Theorem]{Pos11} in view of
	\cref{lemma:fpinjfiniteinj} .
\end{proof}

We finish this section with an example of a locally coherent Grothendieck category
which does not satisfy \cite[Hypothesis 7.1]{St} even though its derived
category is compactly generated. In fact, we obtain it as a heart $\Hcal_{\Tbb}$
in $\D(\Mod{R})$ induced by a compactly generated, intermediate and restrictable
$t$-structure.

\begin{ex}\label{example:hyp7.1}
	Let $(R,\mathfrak m)$ be a commutative and noetherian local ring, of
	dimension 1, which is not Cohen-Macaulay; for example, take $R$ to be the
	localisation of $k[x,y]/(x^2,xy)$, for an algebraically closed field $k$, at
	the maximal ideal $\mathfrak m=(x,y)$.  In particular, we have
	$1=\mathsf{dim}(R)>\mathsf{depth}(R)=0$; and then, by the
	Auslander-Buchsbaum formula, every non-zero finitely generated module is
	projective or has infinite projective dimension (in other words, the small
	finitistic global dimension of $R$ is $0$). Moreover, since $R$ is not
	Cohen-Macaulay, $\mathfrak m$ is an associated prime of $R$; and the other
	primes are minimal, so they are associated as well, i.e. $\Ass R=\Spec R$.
	Therefore, every cyclic module $R/\mathfrak p R$ for a prime $\mathfrak p$
	is a subobject of a projective module ($R$ itself). It follows from Matlis'
	Theorem and \cite[Theorem 7.1]{Bass} that the finitistic injective global
	dimension of $R$ and, by duality, also the finitistic weak global dimension
	of $R$ are $0$. We recall that this means that any $R$-module of finite flat
	dimension is automatically flat.

	Let $V=\{\mathfrak m\}$, consider the associated hereditary torsion pair
	$\mathbf{t}=(\Tcal,\Fcal)$ in $\Mod R$, and let $\Hcal$ be the HRS-tilt of
	$\Mod R$ with respect to $\mathbf{t}$; namely, $\Hcal=\Fcal[1]\ast\Tcal$, we
	refer to \cite{PV2} for terminology and details. Notice that since
	$\D(\Hcal)\cong \D(\Mod{R})$ (by \cite[Corollary 5.11]{PV2}) the former is
	compactly generated. Also, $\Hcal$ is the heart of the Happel-Reiten-Smal\o~
	$t$-structure corresponding to the torsion pair $(\Tcal,\Fcal)$, and this is
	an intermediate $t$-structure which is compactly generated and restrictable
	(\cite[Remark 4.8 and Theorem 2.16(3)]{PV2}).

	Nonetheless, we shall show that there are no non-zero finitely presented
	objects of finite projective dimension in $\Hcal$, and therefore
	\cite[Hypothesis 7.1]{St} is not satisfied.
	
	Since $R$ has dimension 1, every subset of $\Spec R$ is coherent, and
	therefore $V$ corresponds to a flat ring epimorphism $R\to S$, see \cite[\S 5.2]{AMSVT}; given our
	choice of $V$, $S$ will be a regular ring of dimension $0$.  In $\Hcal$,
	there is a hereditary torsion pair $\mathbf{s}=(\Tcal,\Mod S[1])$ (see
	\cite[\S4.2]{PV2}).

	Let $X$ be a finitely presented object of $\Hcal$, i.e.
	$X\in\Hcal\cap\D^b(\mod R)$, and assume it has finite projective dimension. Note that this implies that $X$ is of finite projective dimension also as an object of $\D(\Mod R)$.
	Consider its approximation sequence with respect to $\mathbf{s}$ in $\Hcal$,
	i.e. the triangle \[T\rightarrow X \rightarrow L[1] \rightarrow T[1]\] with
	$T\in\Tcal$ and $L$ an $S$-module. In particular, since
	$\text{gl.dim}(S)=0$, $L$ is a projective $S$-module; since $S$ is a flat
	$R$-module, it has finite projective dimension over $R$ \cite[Seconde
	partie, Corollaire 3.2.7]{RG}, and then so does $L$. From the triangle
	above, we deduce that $T$ has finite projective dimension as well.  Then,
	its flat dimension in $\Mod R$ is also finite, and since the finitistic weak
	global dimension of $R$ is $0$, $T$ is a flat $R$-module. Now, we claim that
	this implies $T=0$. Indeed, consider a presentation
	\[0\rightarrow K\rightarrow F\rightarrow T\rightarrow 0\]
	with $F=R^{(\alpha)}$ a free $R$-module. Since $T$ is flat, this
	sequence is pure exact, and therefore the torsion radical $t$ of
	$\mathbf{t}$ gives a short exact sequence
	\[0\rightarrow tK\rightarrow tF\rightarrow T\rightarrow 0.\]
	By construction, $tR$ is supported on $V=\{\mathfrak m\}$, and since
	it is finitely generated, this means that $V(\ann{tR})=\{\mathfrak
	m\}$. Hence $\mathfrak m=\sqrt{\ann{tR}}$, and since $R$ is
	noetherian it follows that there exists $n$ such that $\mathfrak m^n
	tR=0$. Therefore, $tR, tF=(tR)^{(\alpha)}$ and also $T$ are $R/\mathfrak
	m^n$-modules. $T$ is also flat over $R/\mathfrak m^n$, and since this is
	an artinian local ring, $T$ is free, i.e. $T\cong (R/\mathfrak
	m^n)^{(\beta)}$.  But then, if $T\neq 0$, its direct summand
	$R/\mathfrak m^n$ should be a finitely presented
	flat $R$-module, and therefore projective, which is a contradiction
	because it would force $R$ to be artinian (and therefore
	$0$-dimensional).
	
	It follows that our finitely presented object $X$ of $\Hcal$ is
	isomorphic to $L[1]$. But then, $L$ is a finitely presented $R$-module of
	finite projective dimension, hence it is projective, hence free. Now,
	since $L$ is also an $S$-module, if $L\neq0$ this would imply that
	$R\in\Mod{S}$. In particular, $R$ would be torsion-free with respect to
	$\mathbf{t}$, which is not the case since $\mathfrak m\in\Ass{R}$.
	We conclude that $X\cong L[1]=0$.
\end{ex}

\section{The equivalence of recollements}\label{subsection:situation}

Let again $R$ be a commutative noetherian ring. In this last section we compare
the recollements arising from \cref{cor:restr-recollement} with Krause's
recollement for $\Mod R$.

By \cref{thm:cotilting-side},
\cref{setting:cotilting} characterizes the case in which we have an intermediate
compactly generated restrictable $t$-structure $\Tbb$.
Consider now the following seemingly new situation.

\begin{setting}\label{setting:tilting}
	Let $\Hcal$ be a locally coherent Grothendieck category, and assume that
	there exists an object $T$ in $\D(\Hcal)$ such that:
	\begin{enumerate}[label=(T\arabic*)]
		\item\label{item:2-tilt} $T$ is compact tilting.
		\item\label{item:2-pdim} $T$ has finite projective dimension, i.e.
			$\Hom_{\D(\Hcal)}(T,\Hcal[i])=0$ for $i\gg 0$, 
		\item\label{item:2-end} $\End_{\D(\Hcal)}(T)$ is isomorphic to the commutative
			noetherian ring $R$.
	\end{enumerate}
\end{setting}

Condition \ref{item:2-tilt} ensures that $\D(\Hcal)$ is compactly generated.
Since $T$ is compact, it belongs to $\D^b(\Gcal)$, see
\cref{prop:compacts-are-bounded}. Under this assumption, similarly to before,
condition \ref{item:2-pdim} is equivalent to requiring the
tilting $t$-structure $\Tbb_T$ of $\D(\Hcal)$ associated to $T$ to be
intermediate. Conditions \ref{item:2-tilt} and \ref{item:2-end} imply that its
heart $\Hcal_T$ is isomorphic to $\Mod R$, and we have a triangle equivalence
\[\real_T\colon \D(\Mod{R})=\D(\Hcal_T)\to \D(\Hcal).\]

Using the equivalences $\real_C$ and $\real_T$, we see that these two settings
are the two sides of the same picture: starting from Setting
\ref{setting:cotilting}, the choices $\Hcal:=\Hcal_C$ and $T:=\real_C^{-1}(R)$
fit Setting \ref{setting:tilting}; conversely, taking $C:=\real_T^{-1}(W)$ for
an injective cogenerator $W$ of $\Hcal$, one obtains the $t$-structure $\Tbb_C$
as the pullback along $\real_T$ of the standard $t$-structure of $\D(\Hcal)$.
In the following we will work with \cref{setting:tilting}, while
\cref{setting:cotilting} will serve as motivation.

Our goal is to construct an equivalence between the two recollements
\[\begin{tikzcd}[column sep=1pc]
	\Kac(\inj \Hcal) \arrow{r} &
		\K(\inj \Hcal) \arrow[shift left]{l} \arrow[shift right]{l} \arrow{r} &
		\D(\Hcal) \arrow[shift left]{l} \arrow[shift right]{l} &
	\Kac(\inj R) \arrow{r} &
		\K(\inj R) \arrow[shift left]{l} \arrow[shift right]{l} \arrow{r} &
		\D(\Mod{R}) \arrow[shift left]{l} \arrow[shift right]{l}
\end{tikzcd}\]
In order
to do that, we replace the derived equivalence $\real_C$ by another one which we
are able to lift to the coderived level. We start by fixing a convenient
resolution of $T$.

\begin{lemma}
	Up to shift, $T$ admits a resolution $T:=(F_{-n}\to F_{-n+1}\to
	\cdots\to F_0)$ with finitely presented objects $F_i\in\fp\Hcal$.
\end{lemma}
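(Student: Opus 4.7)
The plan is to invoke Proposition~\ref{prop:compacts-are-bounded} and then conclude by a simple shift.

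Since $T$ is compact in $\D(\Hcal)$ (by \ref{item:2-tilt}) and $\Hcal$ is a locally coherent Grothendieck category, Proposition~\ref{prop:compacts-are-bounded} gives $T\in\D^b(\fp\Hcal)$. Concretely, $T$ is isomorphic in $\D(\Hcal)$ to a bounded complex $G^\bullet$ whose terms $G^i$ lie in $\fp\Hcal$ and vanish outside a finite range of degrees, say $[a,b]$. Shifting by $b$, the complex $G^\bullet[b]$ has nonzero terms in degrees $[a-b,0]$ and is quasi-isomorphic to $T[b]$; relabelling $n:=b-a$ and $F_i:=G^{i+b}$, we obtain the desired complex $F_{-n}\to\cdots\to F_0$ of objects of $\fp\Hcal$ representing (a shift of) $T$.

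Alternatively, and perhaps more in the spirit of the word \emph{resolution}, one may first shift so that $T\in\D^{\le 0}$ with $H^0(T)\ne 0$, and then apply the soft truncation $\tau^{\le 0}$ to a bounded representative $G^\bullet$. The truncated complex ends in $\ker(G^0\to G^1)$ in degree $0$; since local coherence means precisely that $\fp\Hcal$ is an abelian subcategory of $\Hcal$ and is thus closed under kernels of morphisms between its objects, all terms of $\tau^{\le 0}G^\bullet$ remain in $\fp\Hcal$, and boundedness is preserved.

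I do not anticipate any substantive obstacle: Proposition~\ref{prop:compacts-are-bounded} supplies all the nontrivial content, and the remainder is a routine relabelling (or truncation) exploiting the abelianness of $\fp\Hcal$. Note that conditions \ref{item:2-pdim} and \ref{item:2-end} are not needed for this particular lemma; only compactness of $T$ and local coherence of $\Hcal$ enter.
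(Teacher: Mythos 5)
Your proposal is correct and follows essentially the same route as the paper: invoke \cref{prop:compacts-are-bounded} to place $T$ in $\D^b(\fp\Hcal)$, then pass to a strictly bounded representative by soft truncation, using that $\fp\Hcal$ is an abelian subcategory. The only minor caveat is that membership in $\D^b(\fp\Hcal)$ does not immediately give a \emph{strictly} bounded complex over $\fp\Hcal$ (only bounded cohomology), so the truncation step of your second paragraph is not an ``alternative'' but the necessary ingredient — exactly as in the paper's own proof.
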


\begin{proof}
	Since $T$ is compact, by \cref{prop:compacts-are-bounded} it belongs to
	$\D^b(\fp\Hcal)$, so it is quasi-isomorphic to a complex over the
	abelian category $\fp\Hcal$. By taking soft truncations this complex can
	be made strictly bounded.
\end{proof}

Now we consider the functor
$\HOM(T,-)\colon \C(\Hcal)\to \C(\Zbb)$,
defined as the totalization of the bicomplex $\HOM^{\bullet,\bullet}(T,-)$.
Notice that this bicomplex is always bounded along the direction of $T$ (because
we chose a strictly bounded resolution of $T$).

Since $R$ is commutative, $\D(\Hcal)\cong\D(\Mod{R})$ is an $R$-linear category, and
then so is $\Hcal$. 
The bicomplex $\HOM^{\bullet,\bullet}(T,-)$ and its totalization $\HOM(T,-)$
have therefore terms in $\Mod R$ and $R$-linear differentials; this gives us a functor
\begin{equation}
	\Psi:= \HOM(T,-)\colon \C(\Hcal)\to \C(R).
\end{equation}

Moreover, if $X\in\C(\Hcal)$ is contractible, then the rows of
$\HOM^{\bullet,\bullet}(T,X)$ are also contractible, since $\Hom_\Hcal(F_i,-)$
is an additive functor for all $-n\leq i\leq 0$. It follows that
$\HOM(T,X)\in\C(R)$ is also contractible, which gives us a functor
\begin{equation}
	\Psi:=\HOM(T,-)\colon \K(\Hcal)\to \K(R).
\end{equation}
In particular, by restriction of the domain, $\Psi$ induces functors on the
subcategories $\K(\inj\Hcal)\subseteq\K(\fpinj\Hcal)\subseteq\K(\Hcal)$, which
we will continue to denote by $\Psi$.

We record immediately that $\Psi$ induces a derived equivalence
$\D(\Hcal)\cong\D(\Mod{R})$.

\begin{lemma}\label{lemma:rhom-equivalence}
	The functor $\RHom_\Hcal(T,-):= Q\Psi Q_r\colon \D(\Hcal)\to \D(\Mod{R})$ is
	an equivalence. Moreover, it restricts to an equivalence $\D^b(\Hcal)\to
	\D^b(\Mod{R})$, and also to an equivalence $\D^b(\fp \Hcal)\to
	\D^b(\mod{R})$.
\end{lemma}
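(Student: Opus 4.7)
The plan is to apply the double dévissage \cref{lemma:double-devissage} to $F := Q \Psi Q_r$. The functor $F$ is triangulated as a composition of triangulated functors, and it preserves coproducts: for each $n \in \Zbb$ the composition $H^n \circ F = \Hom_{\D(\Hcal)}(T, -[n])$ preserves coproducts since $T$ is compact, and coproducts in $\D(\Mod R)$ are computed on cohomology by the exactness of coproducts in $\Mod R$.

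The key step is to show $F$ restricts to an equivalence on compact objects. Since $T$ is tilting, $\Hom_{\D(\Hcal)}(T, T[n]) = 0$ for all $n \neq 0$, while \ref{item:2-end} gives $\End_{\D(\Hcal)}(T) \cong R$; hence $F(T) \cong R$ in $\D(\Mod R)$. Moreover, since $(T\Perp{>0}, T\Perp{\leq 0})$ is a $t$-structure, its aisle and coaisle intersect trivially, so $\Hom_{\D(\Hcal)}(T, X[n]) = 0$ for all $n \in \Zbb$ forces $X = 0$; combined with compactness, this means $T$ is a compact generator of $\D(\Hcal)$, whence $\D(\Hcal)^c = \mathrm{thick}(T)$ by Neeman's theorem, and likewise $\D(\Mod R)^c = \mathrm{thick}(R)$. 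A standard thick subcategory dévissage starting from $F(T) \cong R$ then yields an equivalence $\mathrm{thick}(T) \toeq \mathrm{thick}(R)$, and \cref{lemma:double-devissage} produces the unbounded equivalence.

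For the restriction to $\D^b(\Hcal) \to \D^b(\Mod R)$ I verify both containments. The subcategory of $\D(\Hcal)$ sent into $\D^b(\Mod R)$ is triangulated, so the inclusion $F(\D^b(\Hcal)) \subseteq \D^b(\Mod R)$ reduces to checking it on $\Hcal$; and for $M \in \Hcal$, $H^n(F(M)) = \Hom_{\D(\Hcal)}(T, M[n])$ vanishes for $n \gg 0$ by \ref{item:2-pdim}, and for $n < 0$ because $T \in \D^{\leq 0}(\Hcal)$ via the resolution of the preceding lemma. The main obstacle is the converse inclusion: condition \ref{item:2-pdim} renders $\Tbb_T$ intermediate, so there exist $l < m$ with $T\Perp{>0} \subseteq \D^{\leq m}$ and $T\Perp{\leq 0} \subseteq \D^{>l}$. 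Given $X \in \D(\Hcal)$ with $F(X)$ having cohomology in $[a, b]$, the vanishing $\Hom_{\D(\Hcal)}(T, X[i]) = H^i(F(X)) = 0$ for $i \notin [a, b]$ yields $X[b] \in T\Perp{>0}$ and $X[a-1] \in T\Perp{\leq 0}$; intermediacy then forces the cohomology of $X$ to lie in $[l + a, m + b]$, so $X \in \D^b(\Hcal)$.

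Finally, the bounded equivalence restricts to $\D^b(\fp \Hcal) \to \D^b(\mod R)$ since triangle equivalences preserve compact objects: $\D^b(\Mod R)^c = \D^b(\mod R)$ is classical for commutative noetherian $R$ (\cite[Corollary~6.17]{Rou}), while $\D^b(\Hcal)^c = \D^b(\fp \Hcal)$ by \cref{lemma:H2}, which applies because \cref{setting:tilting} falls under \cref{setting:cotilting} via the correspondence recalled at the beginning of \cref{subsection:situation}.
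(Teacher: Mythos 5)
Your proof is correct and follows essentially the same route as the paper's: double dévissage after checking coproduct preservation and the equivalence on compacts via $F(T)\cong R$, intermediacy of $\Tbb_T$ for the bounded restriction, and \cref{lemma:H2} together with Rouquier's identification $\D^b(\Mod R)^c=\D^b(\mod R)$ for the finitely presented level. The only detail left implicit in your ``standard thick subcategory dévissage'' is that the degree-zero map $\End_{\D(\Hcal)}(T)\to\End_{\D(\Mod R)}(F(T))$ induced by $F$ is itself an isomorphism (not merely that both rings are abstractly isomorphic to $R$); the paper deduces this from the $R$-linearity of $\RHom_\Hcal(T,-)$, and it is needed to start the dévissage for full faithfulness on $\mathrm{thick}(T)$.
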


\begin{proof}
	By \ref{item:2-tilt} and \ref{item:2-end} we have $\RHom_\Hcal(T,T)\cong
	\End_{\D(\Hcal)}(T)\cong R$, so the functor $\RHom_\Hcal(T,-)$ sends a
	compact generator of $\D(\Hcal)$ to a compact generator of $\D(\Mod{R})$.
	Moreover, since $\RHom_\Hcal(T,-)$ is $R$-linear on $\Hom$-sets, it must
	induce the isomorphism $\End_{\D(\Hcal)}(T)\cong \End_R(R)=R$ of
	endomorphism rings. Since $T$ is a compact generator of $\D(\Hcal)$ and $R$
	is a compact generator of $\D(\Mod R)$, a standard arguments shows that
	$\RHom_\Hcal(T,-)$ induces an equivalence $\D(\Hcal)^c \toeq \D(\Mod R)^c$
	between the categories of compact objects (see e.g.
	\cite[Proposition~6]{Miy}).  Lastly, $\RHom_\Hcal(T,-)$ preserves
	coproducts, since $T$ is compact.  Then, the derived equivalence is
	established by double \textit{dévissage} (\cref{lemma:double-devissage}).

	For the claim about the bounded equivalence, let $X\in\D(\Hcal)$. Then its
	image $\RHom_\Hcal(T,X)$ belongs to $\D^b(\Mod{R})$ if and only if
	$\Hom_{\D(\Hcal)}(T,X[i])=0$ for all but finitely many $i\in\Zbb$; and this
	means that $X$ has finitely many cohomologies with respect to $\Tbb_T$.
	Since $\Tbb_T$ is intermediate, this is equivalent to $X$ belonging to
	$\D^b(\Hcal)$. Therefore, $\RHom_\Hcal(T,-)$ restricts to an equivalence
	$\D^b(\Hcal) \toeq \D^b(\Mod R)$, and therefore also to an equivalence
	$\D^b(\Hcal)^c \toeq \D^b(\Mod R)^c$ between compact objects of the bounded
	derived categories. By \cref{lemma:H2} and \cite[Corollary~6.17]{Rou}, this last equivalence identifies with
	the desired equivalence $\D^b(\fp \Hcal)\toeq \D^b(\mod{R})$.
\end{proof}

\begin{lemma}\label{lemma:psi-coproducts}
	The functor $\Psi\colon \C(\Hcal)\to \C(R)$ preserves direct limits (and in
	particular coproducts). Therefore, also the induced functor $\Psi\colon
	\K(\Hcal)\to \K(R)$ and its restriction $\Psi\colon \K(\fpinj\Hcal)\to
	\K(R)$ preserve coproducts.
\end{lemma}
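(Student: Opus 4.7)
The plan is to reduce everything to a componentwise check using the fact that the chosen resolution of $T$ consists of finitely presented objects, and that the totalisation involved is finite in each total degree.

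First, since $\Hcal$ is locally finitely presented and each $F_i\in\fp\Hcal$, the functor $\Hom_\Hcal(F_i,-)\colon\Hcal\to\Mod\Zbb$ preserves direct limits, essentially by definition of a finitely presented object. Direct limits in $\C(\Hcal)$ (equivalently, coproducts) are computed componentwise in $\Hcal$ because $\C(\Hcal)$ is the functor category from the discrete integer poset to $\Hcal$. Consequently, for each pair $(p,q)\in\Zbb^2$ the functor $\HOM^{p,q}(T,-)=\Hom_\Hcal(F_{-p},(-)^q)$ preserves direct limits on $\C(\Hcal)$.

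Second, I would use that the bicomplex $\HOM^{\bullet,\bullet}(T,X)$ is bounded in the $p$-direction: only the $n+1$ rows corresponding to $-n\leq -p\leq 0$ contribute. Hence for every total degree $m$, the totalisation has the shape
\[
	\HOM(T,X)^m \;=\; \bigoplus_{p=0}^{n}\HOM^{-p,\,m+p}(T,X),
\]
a \emph{finite} direct sum. Finite direct sums commute with direct limits, so combining with the previous paragraph we conclude that $\Psi=\HOM(T,-)\colon\C(\Hcal)\to\C(R)$ preserves direct limits, and in particular coproducts.

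For the homotopy-category statements, coproducts in $\K(\Hcal)$ are computed in $\C(\Hcal)$, since the class of contractible complexes is closed under direct sums (a contracting homotopy of each summand assembles into one for the sum). Moreover, since $\Hcal$ is locally coherent, a coproduct of fp-injective objects is again fp-injective (because $\Ext^1_\Hcal(F,-)$ commutes with coproducts for every $F\in\fp\Hcal$), so the inclusion $\K(\fpinj\Hcal)\hookrightarrow\K(\Hcal)$ preserves coproducts; likewise coproducts in $\K(R)$ are computed in $\C(R)$. Preservation of coproducts by the induced functors $\K(\Hcal)\to\K(R)$ and $\K(\fpinj\Hcal)\to\K(R)$ then follows immediately from the chain-level statement. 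The only real subtlety — and it is a mild one — is the observation on coproducts in $\K(\fpinj\Hcal)$, which is where local coherence is used.
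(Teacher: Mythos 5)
Your proof is correct and follows essentially the same route as the paper's: reduce to the chain level, use that each $\Hom_\Hcal(F_i,-)$ preserves direct limits because $F_i$ is finitely presented, and observe that totalisation commutes with direct limits (your finiteness-in-each-total-degree remark just makes the last point explicit). The extra justifications you supply (contractibles closed under sums, fp-injectives closed under coproducts via local coherence) are accurate and only elaborate on facts the paper takes for granted.
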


\begin{proof}
	Coproducts in $\K(\Hcal)$ are computed termwise, as in $\C(\Hcal)$.
	Moreover, since $\fpinj\Hcal$ is closed under coproducts in $\Hcal$,
	coproducts in $\K(\fpinj\Hcal)$ are computed as in $\K(\Hcal)$. It is
	then enough to prove the claim for $\Psi\colon \C(\Hcal)\to \C(R)$.

	Now, let $X_\alpha:=(\cdots\to X_\alpha^i\to X_\alpha^{i+1}\to
	\cdots)\in\C(\Hcal)$ be a direct system of objects, and consider their
	direct limit $\varinjlim X_\alpha=(\cdots\to \varinjlim X_\alpha^i\to
	\varinjlim X_\alpha^{i+1}\to \cdots)$. $\Psi$ sends it to the
	totalization of the bicomplex
	\[\begin{tikzcd}[sep=1pc]
		\cdots\arrow{r} &\Hom_\Hcal(F_{0},\varinjlim X_\alpha^i) \arrow{r}\arrow{d} &
			\Hom_\Hcal(F_{0},\varinjlim X_\alpha^{i+1}) \arrow{r}\arrow{d} &
			\cdots\\
		\cdots\arrow{r} &\Hom_\Hcal(F_{-1},\varinjlim X_\alpha^i) \arrow{r}\arrow{d} &
			\Hom_\Hcal(F_{-1},\varinjlim X_\alpha^{i+1}) \arrow{r}\arrow{d} &
			\cdots\\
		&\vdots \arrow{d} & \vdots \arrow{d}  \\
		\cdots\arrow{r} &\Hom_\Hcal(F_{-n},\varinjlim X_\alpha^i) \arrow{r} &
			\Hom_\Hcal(F_{-n},\varinjlim X_\alpha^{i+1}) \arrow{r}&
			\cdots
	\end{tikzcd}\]
	Since the $F_i$'s are finitely presented in $\Hcal$, the functors
	$\Hom_\Hcal(F_i,-)$ commute naturally with the direct limits, so
	$\HOM^{\bullet,\bullet}(T,\varinjlim X_\alpha)$ is
	isomorphic in $\C(\C(\Hcal))$ to the direct limit of the bicomplexes
	$\HOM^{\bullet,\bullet}(T,X_\alpha)$.
	Totalization also commutes with direct limits, and so $\Psi$ preserves
	them.
\end{proof}

In order to obtain a functor between the coderived categories, we want $\Psi$ to
preserve coacyclicity. Recall that a locally finitely presentable Grothendieck
category $\Gcal$ admits a natural notion of a pure exact sequence, and that a
complex in $\C(\Gcal)$ is \newterm{pure-acyclic} if it is acyclic and in
addition, each exact sequence $0 \rightarrow Z^i(X) \rightarrow X^i \rightarrow
Z^{i+1}(X) \rightarrow 0$ induced by the cocycles is pure exact in $\Gcal$.

We start by recalling the following fact.

\begin{prop}[\cite{St}]\label{lemma:pureacyclics<coacyclics}
	Over a locally coherent Grothendieck category, pure-acyclic complexes are
	coacyclic.
\end{prop}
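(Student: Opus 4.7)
The plan combines two ingredients. First, every injective object of a Grothendieck category is automatically pure-injective: a pure monomorphism $A \hookrightarrow B$ is in particular a monomorphism, so any morphism $A \to I$ with $I$ injective extends along it. Consequently every complex $E \in \K(\inj \Gcal)$ has pure-injective components. It therefore suffices to prove the stronger assertion that a pure-acyclic complex is left $\Hom$-orthogonal in $\K(\Gcal)$ to every complex with pure-injective terms.

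For the latter, given a chain map $f\colon X \to E$ with $X$ pure-acyclic and $E$ of pure-injectives, I would construct a null-homotopy $s^i\colon X^i \to E^{i-1}$ inductively. Pure-acyclicity provides pure monomorphisms $Z^i(X) \hookrightarrow X^i$ for every $i$. At step $i$, assuming that $s^{\leq i}$ has been built satisfying the homotopy relation in degrees below $i$, the map $f^i - d_E^{i-1} s^i$ kills $Z^i(X)$ (a direct check using the chain-map property $f^i d_X^{i-1} = d_E^{i-1} f^{i-1}$ together with the induction hypothesis $f^{i-1} = d_E^{i-2} s^{i-1} + s^i d_X^{i-1}$), and hence factors through $X^i \twoheadrightarrow Z^{i+1}(X)$ as some $g^{i+1}\colon Z^{i+1}(X) \to E^i$. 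The pure-injectivity of $E^i$ then lets $g^{i+1}$ extend along the pure monomorphism $Z^{i+1}(X) \hookrightarrow X^{i+1}$ to the desired $s^{i+1}$. Combining both ingredients gives $\Hom_{\K(\Gcal)}(X, E) = 0$ for all pure-acyclic $X$ and all $E \in \K(\inj \Gcal)$, which by the definition recalled in the excerpt says precisely that $X \in \Kcoac(\Gcal)$.

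The main obstacle is that this induction works cleanly only for bounded-below complexes, where one has a natural starting point ($s^j = 0$ for small enough $j$). For a genuinely unbounded pure-acyclic complex, one needs either a more delicate zig-zag construction, or, more conceptually, a presentation of $X$ as a filtered colimit of ``smaller'' pure-acyclic pieces — leveraging the locally coherent structure of $\Gcal$ — together with a compatibility argument for the vanishing of $\Hom_{\K(\Gcal)}(-, E)$ under such colimits. This is where the local coherence assumption is genuinely used, and it is the technical heart of the argument.
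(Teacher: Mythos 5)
Your bounded-below induction is correct (and the observation that injectives are pure-injective is fine), but the unbounded case, which you rightly identify as the technical heart, is a genuine gap, and the repair you sketch would not work as stated. The functor $\Hom_{\K(\Gcal)}(-,E)$ does not take direct limits computed in $\C(\Gcal)$ to inverse limits of abelian groups, so exhibiting a pure-acyclic $X$ as a filtered colimit of smaller pure-acyclic pieces gives you no way to propagate the vanishing of $\Hom_{\K(\Gcal)}(-,E)$ to the colimit. This incompatibility of homotopy classes of maps with colimits of complexes is precisely the reason one cannot run the argument inside $\K(\Gcal)$.

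The paper's proof sidesteps the homotopy category and works with $\Ext^1_{\C(\Gcal)}$ instead: by the definition recalled in the preliminaries, $X$ is coacyclic iff $\Ext^1_{\C(\Gcal)}(X,Y)=0$ for every complex $Y$ with injective terms, and by Šťovíček's results (this is where local coherence enters) this is equivalent to $X$ lying in the left class of the complete cotorsion pair in $\C(\Gcal)$ generated by the disks of objects of $\fp \Gcal$. Left classes of cotorsion pairs are closed under retracts and under transfinite extensions (Eklof's lemma) --- this closure under well-ordered continuous chains is the correct substitute for the filtered-colimit compatibility you were hoping for --- and by \cite[Lemma 5.6]{St} every pure-acyclic complex over a locally finitely presentable category is a retract of a transfinite extension of such disks. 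Since each disk is $\Ext^1_{\C(\Gcal)}$-orthogonal to every complex of injectives, the conclusion follows. If you wanted to complete your more hands-on approach you would need essentially the same deconstructibility input from \cite[Lemma 5.6]{St}, at which point the Eklof-lemma argument is both shorter and the one actually used.
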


\begin{proof}
	This follows mainly from \cite[\S6.2]{St}; we recollect the argument
	for the convenience of the reader. Let $\Gcal$ be a
	locally coherent Grothendieck category, and $X$ a complex in
	$\C(\Gcal)$. Then $X$ corresponds to a coacyclic object of $\K(\Gcal)$
	if and only if it is $\Ext^1_\C$-orthogonal to $\C(\inj\Gcal)$, i.e. if
	it belongs to the left class of the functorially complete cotorsion pair
	generated by disks of $\fp\Gcal$. Now, this left class is closed under
	retracts and transfinite extensions, and pure-acyclic
	complexes are (retracts of) transfinite extensions of disks of
	$\fp\Gcal$ in $\C(\Gcal)$ by \cite[Lemma 5.6]{St}.
\end{proof}

\begin{lemma}\label{lemma:psi-coacyclics}
	The restriction $\Psi\colon \K(\fpinj\Hcal)\to \K(R)$ preserves
	coacyclic complexes.
\end{lemma}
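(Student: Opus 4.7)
The plan is to show that the class $\mathcal{S} := \{X \in \K(\fpinj \Hcal) \mid \Psi(X) \in \Kcoac(\Mod R)\}$ contains every coacyclic complex in $\K(\fpinj \Hcal)$. By \cref{lemma:psi-coproducts}, $\Psi$ is a triangle functor commuting with coproducts, and $\Kcoac(\Mod R)$ is a triangulated subcategory of $\K(R)$ closed under coproducts, so $\mathcal{S}$ is automatically a triangulated subcategory of $\K(\fpinj \Hcal)$ closed under coproducts. Hence it suffices to exhibit a generating family of $\Kcoac(\Hcal) \cap \K(\fpinj \Hcal)$ (as a localising subcategory) inside $\mathcal{S}$.

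The key exactness observation is that $\Psi$ sends short exact sequences in $\C(\fpinj \Hcal)$ to short exact sequences in $\C(R)$. Indeed, given an SES $0 \to A \to B \to C \to 0$ in $\C(\fpinj \Hcal)$, the fp-injectivity of $A^k$ combined with each component $F_{-j} \in \fp \Hcal$ of $T$ yields $\Ext^1_\Hcal(F_{-j}, A^k) = 0$, so $\Hom_\Hcal(F_{-j}, -)$ preserves the SES in each degree. Totalising the resulting short exact sequence of bicomplexes produces an SES $0 \to \Psi(A) \to \Psi(B) \to \Psi(C) \to 0$ in $\C(R)$, so that $\Psi(\tot(A \to B \to C)) = \tot(\Psi(A) \to \Psi(B) \to \Psi(C))$ is the totalisation of an SES in $\C(R)$, hence coacyclic in $\K(R)$ by definition. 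Consequently, $\mathcal{S}$ contains the totalisation of every SES coming from $\C(\fpinj \Hcal)$.

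The main obstacle is to verify that these totalisations generate $\Kcoac(\Hcal) \cap \K(\fpinj \Hcal)$ as a localising subcategory of $\K(\fpinj \Hcal)$. The proposed route is to use Šťovíček's cotorsion-pair machinery from \cite[\S 6]{St}: $\Kcoac(\Hcal)$ is the left class of a cotorsion pair on $\C(\Hcal)$ whose right class is $\C(\inj \Hcal)$, and its objects admit transfinite filtrations by explicit SES-type building blocks. The delicate point is to show that when the terminal object of the filtration lies in $\C(\fpinj \Hcal)$, one can carry out the filtration by SES whose terms all stay in $\C(\fpinj \Hcal)$. Equivalently, one may try to identify coacyclic complexes in $\K(\fpinj \Hcal)$ with the pure-acyclic ones by analysing fp-injectivity of cocycles, and then invoke \cite[Lemma 5.6]{St} to obtain the required decomposition of $X$ from disks of finitely presented objects on which $\Psi$ is well-controlled. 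The hardest case will be the unbounded one, where the cocycle analysis (and its compatibility with $\K(\fpinj\Hcal)$) requires the most care.
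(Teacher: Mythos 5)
There is a genuine gap at the central step, and you have flagged it yourself: everything after ``The main obstacle is to verify\dots'' is a plan, not a proof. Concretely, you never establish that the totalisations of short exact sequences from $\C(\fpinj\Hcal)$ generate $\Kcoac(\Hcal)\cap\K(\fpinj\Hcal)$ as a localising subcategory of $\K(\fpinj\Hcal)$. The cotorsion-pair machinery of \cite[\S6]{St} produces transfinite filtrations in $\C(\Hcal)$ whose layers are disks of finitely presented objects; these layers have no reason to lie in $\C(\fpinj\Hcal)$, and converting a transfinite extension in $\C$ into membership in a localising subcategory of $\K(\fpinj\Hcal)$ is exactly the point that needs an argument. (A smaller quibble: the totalisation of a short exact sequence of complexes is coacyclic in Becker's sense --- the sense used throughout the paper --- but not ``by definition''; it is a standard orthogonality computation against complexes of injectives, or an appeal to \cref{lemma:pureacyclics<coacyclics} type results.)

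The missing ingredient is precisely what the paper's proof cites: by \cite[Proposition~6.11]{St}, a complex of fp-injectives is coacyclic if and only if it is pure-acyclic, and by \cite[Lemma~4.14]{St} a complex is pure-acyclic if and only if it is a direct limit of bounded contractible complexes. Your last sentence gestures at the first of these but treats it as something still to be proved (``one may try to identify\dots by analysing fp-injectivity of cocycles''), when it is an available citation; and your worry about ``the hardest case\dots the unbounded one'' evaporates once one uses the direct-limit characterisation, since it applies to unbounded complexes as stated. With those two facts in hand, the proof is immediate from what you have already verified: $\Psi$ preserves direct limits (\cref{lemma:psi-coproducts}) and contractibility, so it sends a direct limit of bounded contractible complexes to another such, i.e.\ a pure-acyclic complex over $R$, which is coacyclic by \cref{lemma:pureacyclics<coacyclics}. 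I would recommend abandoning the d\'evissage-by-SES-totalisations route, which reproves weaker versions of results already in \cite{St} and still leaves the generation question open, in favour of this direct argument.
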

\begin{proof}
	As a partial converse of \cref{lemma:pureacyclics<coacyclics}, a complex
	$X\in\C(\Hcal)$ of fp-injectives is coacyclic in $\K(\Hcal)$ if and only
	if it is pure-acyclic \cite[Proposition~6.11]{St}.
	By \cite[Lemma 4.14]{St}, a complex $X$ in $\C(\Hcal)$ is pure-acyclic
	if and only if it is a direct limit of bounded contractible complexes.
	Since $\Psi\colon\C(\Hcal)\to \C(R)$ preserves
	both direct limits (\cref{lemma:psi-coproducts}) and contractibility,
	$\Psi(X)$ will also be pure-acyclic by the same characterisation.
	Then we conclude by \cref{lemma:pureacyclics<coacyclics} that $\Psi(X)$
	is also coacyclic.
\end{proof}

In view of the equivalences
\[ \begin{tikzcd}
	\K(\inj\Hcal) \arrow{r}{\subseteq}[swap]{\cong} &
	\K(\fpinj\Hcal)/\{\text{pure acyclics}\} \arrow{r}[swap]{\cong} & \D^\co(\Hcal)
\end{tikzcd}\]
by \cref{lemma:psi-coacyclics} we deduce that $\Psi$ induces a functor
\begin{equation}
	\Rbb^\co\Psi\colon \D^\co(\Hcal)\to \D^\co(R).
\end{equation}
On an object $X\in\D^\co(\Hcal)$, $\Rbb^\co\Psi$ is computed by first resolving
$X$ by a complex of fp-injectives (or even injectives), then applying $\Psi$ and
considering the resulting complex as an object of $\D^\co(R)$. When identifying
$\D^\co(\Hcal)\cong \K(\inj \Hcal)$ and $\D^\co(R)\cong \K(\inj R)$,
$\Rbb^\co\Psi$ is then the composition
\begin{equation}\label{eq:rcopsi-injectives}
	\begin{tikzcd}
		\Rbb^\co\Psi\colon\quad \K(\inj\Hcal) \arrow{r}{\subseteq} & \K(\Hcal) \arrow{r}{\Psi} &
			\K(R) \arrow{r}{I_\lambda} & \K(\inj R).
	\end{tikzcd}
\end{equation}

\begin{prop}\label{prop:rcopsi-equivalence}
	$\Rbb^\co\Psi\colon \D^\co(\Hcal)\to \D^\co(R)$ is an equivalence.
\end{prop}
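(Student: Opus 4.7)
The plan is to apply the double d\'evissage lemma (\cref{lemma:double-devissage}). By \cref{theorem:stovicek} applied both to $\Hcal$ and to $\Mod{R}$, the categories $\D^\co(\Hcal) \cong \K(\inj \Hcal)$ and $\D^\co(R) \cong \K(\inj R)$ are compactly generated, with compact subcategories equivalent via $Q_r$ to $\D^b(\fp \Hcal)$ and $\D^b(\mod R)$ respectively. The task therefore reduces to verifying that $\Rbb^\co\Psi$ preserves coproducts and restricts to an equivalence between these subcategories of compact objects.

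Coproduct preservation is immediate from the factorisation \cref{eq:rcopsi-injectives}. The inclusion $\K(\inj\Hcal) \hookrightarrow \K(\Hcal)$ preserves coproducts because $\inj\Hcal$ is closed under coproducts in the locally coherent category $\Hcal$; the middle functor $\Psi$ preserves coproducts by \cref{lemma:psi-coproducts}; and $I_\lambda\colon \K(R) \to \K(\inj R)$ preserves coproducts since it is the left adjoint to the inclusion $\K(\inj R) \hookrightarrow \K(R)$, being the coaisle truncation of the stable t-structure $(\Kcoac(R), \K(\inj R))$.

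For the action on compact objects, fix $X \in \D^b(\fp \Hcal)$ corresponding to the compact $Q_r X \in \K(\inj \Hcal)^c$, so that $\Rbb^\co\Psi(Q_r X) = I_\lambda \Psi Q_r X$. Since the cone of the unit $Y \to I_\lambda Y$ is coacyclic (hence acyclic and killed by $Q$), one has $Q \Rbb^\co\Psi(Q_r X) = Q \Psi Q_r X = \RHom_\Hcal(T, X) \in \D^b(\mod R)$ by \cref{lemma:rhom-equivalence}. Combined with the equivalence $Q\colon \K(\inj R)^c \xrightarrow{\sim} \D^b(\mod R)$ of \cref{corollary:rightadjoint} (with inverse $Q_r$), this identifies $\Rbb^\co\Psi(Q_r X)$ with the compact $Q_r \RHom_\Hcal(T, X)$, and makes the square
\[\begin{tikzcd}
\D^b(\fp \Hcal) \arrow{r}{\RHom_\Hcal(T,-)} \arrow{d}{Q_r} & \D^b(\mod R) \arrow{d}{Q_r} \\
\K(\inj \Hcal)^c \arrow{r}{\Rbb^\co\Psi} & \K(\inj R)^c
\end{tikzcd}\]
commute up to natural isomorphism. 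Since three of its arrows are equivalences, so is the fourth, and the double d\'evissage lemma then yields that $\Rbb^\co\Psi$ is an equivalence.

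The main technical subtlety I foresee is upgrading the statement that $Q\Rbb^\co\Psi(Q_r X) \in \D^b(\mod R)$ to the statement that $\Rbb^\co\Psi(Q_r X)$ itself lies in $\K(\inj R)^c$. I would handle this by using the unit of the adjunction $(Q, Q_r)$ to produce a canonical morphism $\Rbb^\co\Psi(Q_r X) \to Q_r \RHom_\Hcal(T, X)$ whose image under $Q$ is an isomorphism, and then showing that the resulting cone, which a priori lies in $\Kac(\inj R)$, must vanish; the vanishing should exploit the recollement of \cref{theorem:recollement} together with the explicit form of $\Psi$ as the totalisation of a bicomplex with finitely many rows, coming from the strictly bounded resolution of the compact tilting object $T$.
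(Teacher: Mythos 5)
Your overall route is the paper's: double d\'evissage, with the action on compact objects reduced to the equivalence $\RHom_\Hcal(T,-)\colon\D^b(\fp\Hcal)\to\D^b(\mod R)$ of \cref{lemma:rhom-equivalence}. But the step you yourself flag as the ``main technical subtlety'' is precisely the step you have not supplied, and the fallback you sketch for it would fail: a cone lying in $\Kac(\inj R)$ between two compact objects of $\K(\inj R)$ has no reason to vanish, since by \cref{cor:Kac} such cones generate (up to retracts) the singularity category $\D^\sing(\Mod R)$, which is nonzero unless $R$ is regular, so no formal appeal to the recollement can kill it. What makes the argument work is a boundedness observation. A compact object of $\K(\inj\Hcal)$ is the dg-injective resolution $Q_rX$ of some $X\in\D^b(\fp\Hcal)$ and is in particular \emph{bounded below}; since the chosen resolution of $T$ is strictly bounded, $\Psi=\HOM(T,-)$ preserves bounded below complexes, and $I_\lambda$ of a bounded below complex is again bounded below (it is computed by a dg-injective resolution, the cone being bounded below acyclic, hence coacyclic). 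Therefore $Y:=I_\lambda\Psi(Q_rX)$ is a bounded below complex of injectives, hence dg-injective, hence the unit $Y\to Q_rQY$ is already an isomorphism --- there is no cone to analyse. From $Y\cong Q_rQY\cong Q_r(Q\Psi Q_r)(X)=Q_r\RHom_\Hcal(T,X)$ your commutative square follows and the d\'evissage closes.

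A secondary flaw: your justification of coproduct preservation asserts that $\inj\Hcal$ is closed under coproducts because $\Hcal$ is locally coherent. That closure property characterises locally \emph{noetherian} Grothendieck categories, and the hearts considered here are typically not locally noetherian, so the inclusion $\K(\inj\Hcal)\hookrightarrow\K(\Hcal)$ need not preserve coproducts. The correct route is through the identification $\D^\co(\Hcal)\cong\K(\fpinj\Hcal)/\{\text{pure acyclics}\}$: fp-injectives \emph{are} closed under coproducts (indeed under direct limits) in a locally coherent category, coproducts there are computed termwise, and \cref{lemma:psi-coproducts} is stated exactly for the restriction of $\Psi$ to $\K(\fpinj\Hcal)$.
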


\begin{proof}
	We want to argue by double \textit{dévissage}.

	First, $\Rbb^\co\Psi\colon \D^\co(\Hcal)\to \D^\co(R)$ preserves
	coproducts, since $\Psi$ does (\cref{lemma:psi-coproducts}).

	Now we show that it induces an equivalence between the subcategories of
	compact objects.  In view of the identification $\D^\co(\Hcal)\cong
	\K(\inj\Hcal)$, a compact object of $\D^\co(\Hcal)$ is identified with
	the dg-injective resolution $X$ of an object in $\D^b(\fp\Hcal)$; in
	particular, this is a bounded below complex. When we apply $\Psi$ and
	then $I_\lambda$, as in \eqref{eq:rcopsi-injectives}, we obtain again a
	bounded below complex, first in $\K(R)$ and then in $\K(\inj R)$. This
	last object $Y:=I_\lambda\Psi(X)$, in particular, is a dg-injective
	complex.  Since we have $X\cong Q_rQX$ and $Y\cong Q_rQY$ in
	$\K(\inj\Hcal)$ and $\K(\inj R)$, respectively, we can write
	\[ \Rbb^\co\Psi(X)=Y\cong Q_rQY=Q_rQI_\lambda\Psi X=Q_rQ\Psi X\cong
		Q_rQ\Psi Q_rQX=:(\ast) \]
	Now, by definition, $\RHom_\Hcal(T,-):=Q\Psi Q_r$, so we can continue
	\[ (\ast)=Q_r\RHom_\Hcal(T,QX)\]
	It is therefore sufficient to show that $Q_r\RHom_\Hcal(T,Q-)$ is an
	equivalence between $\K(\inj\Hcal)^c$ and $\K(\inj R)^c$. Now, $Q\colon
	\K(\inj\Hcal)^c\to \D^b(\fp\Hcal)$ and $Q_r\colon \D^b(\mod R)\to K(\inj
	R)^c$ are equivalences, and $\RHom_\Hcal(T,-)\colon \D^b(\fp\Hcal)\to
	\D^b(\mod R)$ is an equivalence by \cref{lemma:rhom-equivalence}.
\end{proof}

Now that we have the equivalence between the coderived categories, we show that it
preserves the recollements. First we need a technical lemma.

\begin{lemma}\label{lemma:T-is-in-Ql}
	$I_\lambda T\cong Q_lQI_\lambda T$ in
	$\D^\co(\Hcal)\cong\K(\inj\Hcal)$.
\end{lemma}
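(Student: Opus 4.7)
The plan is to reduce the statement to the two already-recorded facts about the functors $Q_l$ and $Q_r$ from Section~2: namely that $Q_r$ computes dg-injective resolutions (\cref{corollary:rightadjoint}), and that $Q_l$ and $Q_r$ agree on compact objects of $\D(\Hcal)$ (\cref{lem-Ql-Qr}). The key observation is that assumption \ref{item:2-tilt} gives us exactly what we need: $T$ is compact in $\D(\Hcal)$.

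First I would unpack the right-hand side. The functor $I_\lambda\colon \K(\Hcal)\to \K(\inj\Hcal)$ is the right truncation with respect to the stable $t$-structure $(\Kcoac(\Hcal),\K(\inj\Hcal))$, so for any $X\in\K(\Hcal)$ the map $X\to I_\lambda X$ becomes an isomorphism after composing with the localization $Q$. Applied to the strictly bounded complex $T=(F_{-n}\to\cdots\to F_0)\in\K(\Hcal)$, this gives a natural isomorphism $Q I_\lambda T\cong T$ in $\D(\Hcal)$. It therefore suffices to produce an isomorphism $I_\lambda T\cong Q_l T$ in $\K(\inj\Hcal)$.

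Next I would identify $I_\lambda T$ with $Q_r T$. By \cref{corollary:rightadjoint}, $Q_r$ sends any object of $\D(\Hcal)$ to a dg-injective resolution; but $I_\lambda T$ is itself a dg-injective replacement of $T$ in $\K(\Hcal)$, and any two dg-injective resolutions of the same object of $\D(\Hcal)$ are homotopy equivalent. Hence $Q_r T\cong I_\lambda T$ in $\K(\inj\Hcal)$. Finally, since $T$ is compact in $\D(\Hcal)$ by \ref{item:2-tilt}, \cref{lem-Ql-Qr} yields $Q_l T\cong Q_r T$. Stringing these together gives
\[
Q_l Q I_\lambda T \;\cong\; Q_l T \;\cong\; Q_r T \;\cong\; I_\lambda T,
\]
as desired.

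I do not anticipate a real obstacle here: the lemma is essentially a packaging of the two properties of the adjoints of $Q$ established earlier, combined with the compactness of $T$. The only point to be mildly careful about is that the claimed isomorphisms are natural enough that the composition really produces the isomorphism asserted in the statement, but this is automatic from the adjunctions and the Yoneda argument used in \cref{lem-Ql-Qr}.
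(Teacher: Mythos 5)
Your overall reduction is the same as the paper's: identify $I_\lambda T$ with a dg\nobreakdash-injective resolution of $T$, compare with $Q_r(QT)$, and then invoke \cref{lem-Ql-Qr} using compactness of $T$. However, there is one step where the argument is asserted rather than justified, and it is exactly the step where the boundedness of $T$ is needed.

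You write ``$I_\lambda T$ is itself a dg-injective replacement of $T$ in $\K(\Hcal)$''. This is not a general fact. For an arbitrary complex $X$, the object $I_\lambda X$ is a complex with injective components whose cone over $X$ is coacyclic (hence acyclic), but a complex of injectives need not be dg-injective, and $\K(\inj\Hcal)$ is typically strictly larger than the subcategory of dg-injective complexes. What saves the day here is that $T$ is (strictly) bounded below: the paper starts from the dg-injective resolution $E$ of $T$ and forms the triangle $A\to T\to E\to A[1]$; since $T$ and $E$ are bounded below, so is the acyclic complex $A$, and bounded-below acyclic complexes are coacyclic. This exhibits $E$ as the right approximation of $T$ with respect to the stable $t$-structure $(\Kcoac(\Hcal),\K(\inj\Hcal))$, i.e.\ $E\cong I_\lambda T$, after which $E\cong Q_rQE\cong Q_rQT$ because $E$ is dg-injective. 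You should spell out this boundedness argument; as written, the sentence reads as if any $I_\lambda X$ were dg-injective, which would be false. Once that gap is filled, the rest of your chain of isomorphisms (including the reduction $QI_\lambda T\cong T$ and the use of compactness via \cref{lem-Ql-Qr}) matches the paper's proof.
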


\begin{proof}
	Let $E$ be the dg-injective resolution of $T$; we have a triangle in
	$\K(\Hcal)$
	\[A\to T\to E\to A[1]\]
	with $A$ acyclic. Since $T$ is bounded below, $E$ and then $A$ are also
	bounded below. $A$ is therefore also coacyclic. This means that
	$E\cong I_\lambda T$ in $\K(\inj\Hcal)$.
	Now, since $E$ is dg-injective we have $E\cong Q_rQE\cong Q_rQT$; but
	$QT$ is compact in $\D(\Hcal)$, and therefore $Q_rQT\cong Q_lQT$ by \cref{lem-Ql-Qr}. We
	conclude as wanted that $I_\lambda T\cong Q_lQ T$ in $\K(\inj\Hcal)$.
\end{proof}

\begin{lemma}\label{lemma:rcopsi-acyclics}
	$\Rbb^\co\Psi\colon \D^\co(\Hcal)\to \D^\co(R)$ preserves acyclics.
\end{lemma}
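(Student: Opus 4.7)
The plan is to show directly that for any $X \in \Kac(\inj\Hcal)$, i.e., for what acyclicity in $\D^\co(\Hcal)\cong \K(\inj\Hcal)$ means, the complex $\Psi X$ is already acyclic in $\K(R)$. This suffices: $\Rbb^\co\Psi X \cong I_\lambda \Psi X$ differs from $\Psi X$ only by a coacyclic fibre, which is in particular acyclic, so $\Rbb^\co\Psi X$ is acyclic as well.

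The device is to replace $T$ by its dg-injective resolution $I_\lambda T$. Applying the contravariant triangle functor $\HOM(-,X)\colon \K(\Hcal)\to \K(R)$ to the canonical truncation triangle $\tau_{\Kcoac(\Hcal)} T \to T \to I_\lambda T \to$ in $\K(\Hcal)$ yields a triangle
\[
\HOM(I_\lambda T, X) \to \HOM(T, X) = \Psi X \to \HOM(\tau_{\Kcoac(\Hcal)} T, X) \to
\]
in $\K(R)$. The $n$-th cohomology of the rightmost term is $\Hom_{\K(\Hcal)}(\tau_{\Kcoac(\Hcal)} T, X[n])$, which vanishes by the very orthogonality $\Hom_{\K(\Hcal)}(\Kcoac(\Hcal),\K(\inj\Hcal))=0$ defining the stable $t$-structure $(\Kcoac(\Hcal), \K(\inj\Hcal))$. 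Hence the map $\HOM(I_\lambda T, X) \to \Psi X$ is a quasi-isomorphism, and it remains only to verify that $\HOM(I_\lambda T, X)$ itself is acyclic.

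This is where \cref{lemma:T-is-in-Ql} enters in an essential way: it places $I_\lambda T$ in the image of $Q_l$, that is, in the localising subcategory $\Lcal$ introduced in the proof of \cref{lemma:leftadjoint}. Since the recollement of \cref{theorem:recollement} corresponds to a stable TTF triple in $\K(\inj\Hcal)$ whose first stable $t$-structure is $(\Lcal, \Kac(\inj\Hcal))$, the orthogonality immediately yields $\Hom_{\K(\inj\Hcal)}(I_\lambda T, X[n])=0$ for every $n\in\Zbb$, and so $\HOM(I_\lambda T, X)$ is acyclic as required. The conceptual obstacle worth flagging is that a naive attempt to factor $Q_R\Rbb^\co\Psi$ through $Q_\Hcal$ via the identity $\RHom_\Hcal(T,-)=Q_R\Psi Q_r$ produces a circular argument (the cone of the unit $X\to Q_r Q_\Hcal X$ lies in $\Kac(\inj\Hcal)$, which reduces the problem back to the present lemma); the point of passing from $T$ to $I_\lambda T$, together with \cref{lemma:T-is-in-Ql}, is precisely to break this circularity.
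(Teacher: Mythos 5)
Your proof is correct and is essentially the paper's argument recast via an explicit triangle: the paper instead computes directly
\[ H^n\Psi X\cong \Hom_{\K(\Hcal)}(T,X[n])\cong \Hom_{\K(\inj\Hcal)}(I_\lambda T,X[n])\cong \Hom_{\K(\inj\Hcal)}(Q_lQT,X[n])\cong\Hom_{\D(\Hcal)}(QT,QX[n])=0, \]
bundling your two vanishings (orthogonality of $\Kcoac(\Hcal)$ against $\K(\inj\Hcal)$, and of $\Lcal$ against $\Kac(\inj\Hcal)$) into two adjunction isomorphisms, and invoking \cref{lemma:T-is-in-Ql} at exactly the same spot. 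The only caveat is technical: you apply $\HOM(-,X)$ to $I_\lambda T$ and $\tau_{\Kcoac(\Hcal)}T$, which are no longer strictly bounded complexes of finitely presented objects, so to make $\HOM(-,X)$ a triangle functor and to guarantee $H^n\HOM(A,X)\cong\Hom_{\K(\Hcal)}(A,X[n])$ for unbounded $A$, you must use the product totalisation --- a choice the paper sidesteps by only ever forming $\HOM(T,-)$ with the fixed finite resolution $T$ in the first slot.
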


\begin{proof}
	Identifying $\D^\co(\Hcal)\cong\K(\inj\Hcal)$ and in view of
	\eqref{eq:rcopsi-injectives}, let $X\in\K(\inj\Hcal)$
	be acyclic. For every $n\in\Zbb$ we have
	\begin{multline*}
		H^nI_\lambda\Psi X\cong H^n\Psi X=H^n\HOM(T,X)\cong\\
			\cong\Hom_{\K(\Hcal)}(T,X[n])\cong
				\Hom_{\K(\inj\Hcal)}(I_\lambda
				T,X[n])\overset{(1)}{\cong}\\
			\cong \Hom_{\K(\inj\Hcal)}(Q_lQT, X[n])\cong
			\Hom_{\D(\Hcal)}(QT,QX[n])\overset{(2)}{=}0
	\end{multline*}	
	where $(1)$ is by \cref{lemma:T-is-in-Ql} and $(2)$ because $QX=0$.
\end{proof}


\begin{thm}\label{theorem:eor}
	The functor $\Rbb^\co\Psi\colon \D^\co(\Hcal)\to \D^\co(R)$ induces an equivalence
	of recollements, that is, there is a diagram
	\[\begin{tikzcd}[column sep=1pc, row sep=1pc]
		\DS(\Hcal) \arrow{d}[swap]{\mathbb{S}\Psi}{\cong} \arrow{r} &
			\D^{\co}(\Hcal) \arrow{d}[swap]{\Rbb^\co\Psi}{\cong} \arrow[shift left]{l} \arrow[shift right]{l} \arrow{r} &
			\D(\Hcal) \arrow{d}[swap]{\Rbb\Psi}{\cong} \arrow[shift left]{l} \arrow[shift right]{l} \\
		\DS(\Mod{R}) \arrow{r} &
			\D^\co(\Mod{R}) \arrow[shift left]{l} \arrow[shift right]{l} \arrow{r} &
			\D(\Mod{R}) \arrow[shift left]{l} \arrow[shift right]{l}
	\end{tikzcd}\]
	in which the rows are the recollements from \cref{rem:stablederived} of $\Hcal$ and $\Mod{R}$ and such that all the six obvious squares commute.
\end{thm}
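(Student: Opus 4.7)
The plan is to derive the equivalence of recollements from the middle-column equivalence $\Rbb^\co\Psi$, using the framework of \cref{sec:recollements}: once an equivalence on the right-hand column is produced together with a commuting square, the remaining data (including $\mathbb{S}\Psi$ and the commutativity of all six squares) follow formally from \cite{PS88}.

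By \cref{rem:stablederived} each recollement has the shape $\smallrecoll{\DS(-)}{\D^\co(-)}{\D(-)}$, with the right-hand functor being the Verdier localization $Q$ and $\DS(-)$ its kernel. Since $\Rbb^\co\Psi$ is an equivalence by \cref{prop:rcopsi-equivalence} and sends $\DS(\Hcal)$ into $\DS(\Mod R)$ by \cref{lemma:rcopsi-acyclics}, the universal property of Verdier localization induces a triangle functor $\Rbb\Psi\colon \D(\Hcal) \to \D(\Mod R)$ together with a natural isomorphism $\Rbb\Psi \circ Q \cong Q \circ \Rbb^\co\Psi$; this is the required right square. Restricting $\Rbb^\co\Psi$ simultaneously produces $\mathbb{S}\Psi\colon \DS(\Hcal) \to \DS(\Mod R)$, making the left square commute tautologically.

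The main step is to verify that $\Rbb\Psi$ is an equivalence, which I would establish by double dévissage (\cref{lemma:double-devissage}). Both $\D(\Hcal)$ and $\D(\Mod R)$ are compactly generated, the former by $T$ via \ref{item:2-tilt}. Since $Q$ and $\Rbb^\co\Psi$ preserve coproducts (the latter by \cref{lemma:psi-coproducts}), so does $\Rbb\Psi$. Tracing through the construction of $\Rbb^\co\Psi$ as $I_\lambda \Psi$ together with the commuting right square, one identifies $\Rbb\Psi$ with $Q\Psi Q_r = \RHom_\Hcal(T,-)$; hence by \cref{lemma:rhom-equivalence} it restricts on compact objects to the equivalence $\D^b(\fp\Hcal) \toeq \D^b(\mod R)$. \Cref{lemma:double-devissage} then gives that $\Rbb\Psi$ is an equivalence, after which the equivalence $\mathbb{S}\Psi$ and the commutativity of all six squares follow from \cite{PS88} as recalled in \cref{sec:recollements}.

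I expect the only subtle point to be the identification $\Rbb\Psi \cong \RHom_\Hcal(T,-)$. This is morally already contained in the proof of \cref{prop:rcopsi-equivalence} on compact objects, but writing it down for arbitrary objects of $\D(\Hcal)$ requires unwinding the adjunction $(Q,Q_r)$ between $\K(\inj\Hcal)$ and $\D(\Hcal)$ and the fact that the counit $Q_rQX \to X$ has cone in $\DS(\Hcal)$ for $X \in \K(\inj \Hcal)$, which is exactly what makes the composite $Q\Rbb^\co\Psi$ factor through $Q$ in the first place.
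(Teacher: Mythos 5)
Your proposal is correct and follows essentially the same strategy as the paper: establish the commuting right-hand square relating $\Rbb^\co\Psi$ to $\Rbb\Psi=\RHom_\Hcal(T,-)$ via the $(Q,Q_r)$ adjunction and the fact that the counit cone is coacyclic, then invoke \cite{PS88} for the equivalence $\mathbb{S}\Psi$ and the remaining squares. One small slip in the (redundant) double-dévissage step: once you have identified $\Rbb\Psi$ with $\RHom_\Hcal(T,-)$, Lemma~\ref{lemma:rhom-equivalence} already gives that it is an equivalence, so no further dévissage is needed; moreover, if you did run dévissage on $\D(\Hcal)\to\D(\Mod R)$, the relevant restriction on compacts would be $\D(\Hcal)^c\toeq\D(\Mod R)^c$ (perfect complexes), not $\D^b(\fp\Hcal)\toeq\D^b(\mod R)$, since these are generally strictly larger.
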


\begin{proof}
	Identify $\D^\co(\Hcal)\cong\K(\inj\Hcal)$ and $\D^\co(R)\cong\K(\inj
	R)$. By \cref{prop:rcopsi-equivalence}, $\Rbb^\co\Psi$ is an equivalence. By
	\cref{lemma:rcopsi-acyclics}, it preserves acyclicity.
	In view of basic results on recollement equivalences (see
	\cref{sec:recollements}), it is enough to show that the following square is
	commutative up to equivalence
	\[ \begin{tikzcd}
		\K(\inj \Hcal) \arrow{d}{\cong}[swap]{\Rbb^\co\Psi}\arrow{r}{Q} & \D(\Hcal) \arrow{d}{\cong}[swap]{\Rbb\Psi}\\
		\K(\inj R) \arrow{r}{Q} & \D(\Mod{R})
	\end{tikzcd}\]
	where $\Rbb\Psi = \RHom_\Hcal(T,-)$. Since $\Rbb^\co\Psi$ preserves
	acyclics, the composition $Q\Rbb^\co\Psi$ kills objects from
	$\Kac(\inj{\Hcal})$, and thus the approximation triangle with respect to the
	stable $t$-structure $(\Kac(\inj \Gcal),Q_r(\D(\Gcal))$ in $\K(\inj \Gcal)$
	yields a natural equivalence $Q\Rbb^\co\Psi \cong Q\Rbb^\co\Psi Q_rQ$. Then
	we can compute similarly as in \cref{prop:rcopsi-equivalence}:
	\[Q\Rbb^\co\Psi Q_rQ = Q I_\lambda \Psi Q_rQ \cong Q\Psi Q_rQ = \Rbb\Psi Q.\]
	The rest follows by denoting the induced triangle equivalence $\DS(\Hcal)
	\rightarrow \DS(\Mod{R})$ by $\mathbb{S}\Psi$.
\end{proof}

\bibliographystyle{plain}
\bibliography{bibitems}
\end{document}